\documentclass[10pt]{amsart}
\usepackage{latexsym}
\usepackage{stmaryrd}
\usepackage{amsmath}
\usepackage{epsfig}
\usepackage{graphicx}
\usepackage{eufrak}
\usepackage{dsfont}
\usepackage[english]{babel}
\usepackage{color}
\usepackage{mathabx}
\usepackage{amscd}      
\usepackage{amssymb}
\usepackage{dsfont}
\usepackage{xypic}      
\LaTeXdiagrams          
\usepackage[all,v2]{xy}
\xyoption{2cell} \UseAllTwocells \xyoption{frame} \CompileMatrices
\usepackage{latexsym}
\usepackage{epsfig}
\usepackage{enumerate}

\usepackage{latexsym}
\usepackage{epsfig}
\usepackage{amsfonts}
\usepackage{enumerate}
\usepackage{mathrsfs}

\allowdisplaybreaks[3]

\newtheorem{prop}{Proposition}[section]
\newtheorem{lem}[prop]{Lemma}

\newtheorem{cor}[prop]{Corollary}
\newtheorem{thm}[prop]{Theorem}
\newtheorem{rmk}[prop]{Remark}
\newtheorem{example}{Example}

\newtheorem{defn}[prop]{Definition}

            {\nolinebreak $\Box$ \end{trivlist}}

\newcommand{\noprint}[1]{}

\renewcommand{\tilde}{\widetilde}

\newcommand{\qq}{{\mathbb Q}}

\newcommand{\sI}{{\mathcal I}}

\newcommand{\sO}{{\mathcal O}}

\newcommand{\sX}{{\mathcal X}}

\newcommand{\Coh}{\mbox{Coh}}

\newcommand{\cEnd}{\mathscr{E}nd}
\newcommand{\cHom}{\mathscr{H}om}

\DeclareMathOperator{\id}{id}

\DeclareMathOperator{\Ch}{Ch}
\DeclareMathOperator{\CR}{CR}

\DeclareMathOperator{\Supp}{Supp}

\DeclareMathOperator{\Td}{Td}

\newcommand{\rk}{\mathop{\rm rk}}

\newcommand{\pr}{\mathop{\rm pr}\nolimits}

\newcommand{\red}{\mathop{\rm red}\nolimits}

\numberwithin{equation}{subsection}

\newcommand {\mat}      [1] {\left(\begin{array}{#1}}
\newcommand {\rix}          {\end{array}\right)}

\title[Fixed point locus of Moduli spaces of Sheaves on Toric DM stacks]{Fixed point locus of Moduli spaces of sheaves on Toric Dm stacks}
\author{ Promit Kundu}
\address{Institute of Mathematics\\ Shanghai Tech University\\ 393 Middle Huaxia Road\\ Pudong, Shanghai, 201210, China} 
\email{kundupromit63@gmail.com}

\begin{document}
\sloppy 
\begin{abstract}
Extending work of Klyachko, Perling and Kool we develop a combinatorial description of torsion free toric sheaves in any dimension on smooth toric DM stacks. We investigate their basic properties and under certain conditions recover some known results on smooth toric varieties. The action of the torus lifts to the moduli of torsion free modified Gieseker stable sheaves on the smooth DM stack, and we express its fixed point locus explicitly in terms of certain finer invariants called characteristic functions. These techniques will be exploited to compute topological invariants of the moduli of modified stable torsion free sheaves on smooth toric DM stacks. 
\end{abstract}

\maketitle

\tableofcontents

\section{Introduction}
In this paper, we study the moduli space of torsion-free sheaves on smooth projective toric Deligne–Mumford (DM) stacks. A central theme in the moduli theory of sheaves on smooth projective DM stacks is the interplay between geometric invariant theory (GIT) and stability conditions arising from an ample line bundle on the coarse moduli space and a vector bundle on the stack. We explore the moduli of modified stable torsion-free toric sheaves on smooth toric DM stacks, which correspond to fixed points of the natural torus action on the moduli space of modified Gieseker stable sheaves. This correspondence allows us to obtain a decomposition of the moduli space of stable toric torsion-free sheaves with a fixed Hilbert polynomial in terms of moduli spaces of stable torsion-free sheaves with fixed characteristic functions that give rise to the same modified Hilbert polynomial. Such a decomposition reduces the study of a complicated moduli space to a disjoint union of better-understood components parametrized by combinatorial data.

When studying Gieseker and slope stability conditions for torsion-free toric sheaves on smooth DM stacks, we must invoke the concept of a generating sheaf $\Xi$ as introduced in \cite{Nir}. The generating sheaf is an ample vector bundle on a smooth DM stack that plays a crucial role in defining a Hilbert polynomial for a coherent sheaf on a stack. Without the generating sheaf, the Hilbert polynomial defined solely in terms of the ample line bundle coming from the projective coarse moduli space only captures the behavior of sheaves on the coarse moduli space, losing information about the stacky structure. To address this, our Hilbert polynomial is modified to incorporate both the ample line bundle and the generating sheaf, and we study stability with respect to an ample line bundle together with a generating sheaf equipped with a fixed toric equivariant structure. This modification ensures that the stability condition reflects the genuine stacky geometry.

We now introduce our main theorem. Let $\sX$ be a smooth toric DM stack with top cones $\sigma_i,\ i=1,..,l$. Let $H:=\sO_{\sX}(1)$ be an ample line bundle coming from the projective normal coarse moduli space $X$. Let $\Xi$ be the generating sheaf with fixed equivariant structure and let $P_{\Xi}$ denote the modified Hilbert polynomial. Let $M^s_{P_{\Xi}}$ denote the moduli space of modified Gieseker stable sheaves with fixed modified Hilbert polynomial $P_{\Xi}$. This moduli space admits a natural $\mathbb{T}$-action induced by the torus action on the stack, and we denote the fixed point locus by $(M^s_{P_{\Xi}})^{\mathbb{T}}$.\par
Let $\vec\chi$ be a characteristic function that, corresponding to a torsion-free toric sheaf, encodes the dimensions of the weight spaces associated with the torus action and simultaneously fixes a modified Hilbert polynomial. \par
For a stable toric torsion free sheaf $\mathscr{F},$ we have an unique choice (Remark \ref{Stable remark}) of a box element $b_i\in B_{\sigma_i}(\mathbb{T})$ for every open chart $\mathscr{U}_{\sigma_i}.$ A box element on each open chart parametrizes the non-primitive, non-degenrate torus action on that open chart. Let $(b_1,..b_l)\in \prod_{i=1}^{l}B_{\sigma_{i}}(\mathbb{T}),$ be the unique $l-$tuple of box elements for $\mathscr{F}.$ We define the characteristic function as
$$_{(b_1,..,b_l)}\vec\chi_{\mathscr{F}}:(\mathbb{Z}^d)^{l}\to \mathbb{Z}^l$$ given by
$$\big(\ _{(b_1,..,b_l)}\chi^{\sigma_1}_{\mathscr{F}}(m_1),..,\ _{(b_1,..,b_l)}\chi^{\sigma_l}_{\mathscr{F}}(m_l)\ \big)$$
$$=\big(\mathsf{dim}_{\mathbb{C}}(_{b_1}F^{\sigma_{1}}(m_1)),..,\mathsf{dim}_{\mathbb{C}}(_{b_l}F^{\sigma_l}(m_l))\big).$$
Here $_{b_i}F^{\sigma_{i}}(m_i)$ represents the vector spaces corresponding to the $S-family$ of the toric subsheaf $_{b_i}\mathscr{F}|_{\mathscr{U}_{\sigma_i}}.$\par  
We denote the moduli space of stable torsion free toric sheaf with fixed characteristic function $\vec\chi$ by $M^s_{\vec\chi},$ constructed using GIT. Such moduli spaces are combinatorial in nature. Let $(\chi_{P_{\Xi}})^{fr}$ denote the set of framed characteristic functions that give rise to the above Hilbert polynomial. There are finitely many characteristic functions giving rise to a fixed modified Hilbert polynomial. Then the following isomorphism of finite type quasi-projective $\mathbb{C}-$ schemes holds (Theorem \ref{Theorem 4.5}). 
\begin{thm}
Let $\sX$ be a smooth projective toric DM stack. Let $\sO_{X}(1),\Xi$ be an ample line bundle and a generating sheaf with fixed equivariant structure, respectively. Let $P_{\Xi}$ be a choice of a modified Hilbert polynomial of degree $dim(\sX). $ Then there exists a canonical isomorphism,
$$(M^{s}_{P_{\Xi}})^{\mathbb T} \cong \coprod \limits_{\vec\chi\in (\chi_{P_{\Xi}})^{fr}}M^{s}_{\vec\chi}.$$
\end{thm}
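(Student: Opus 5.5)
We follow the strategy of Kool's proof for smooth toric varieties and adapt it to the stacky setting through the box-element decomposition of Remark \ref{Stable remark}. The first step is to describe the closed points of $(M^{s}_{P_{\Xi}})^{\mathbb T}$. Let $[\mathscr{F}]$ be a closed point fixed by $\mathbb{T}$, so that $t^{*}\mathscr{F}\cong\mathscr{F}$ for every $t\in\mathbb{T}$. Since $\mathscr{F}$ is modified Gieseker stable, it is simple: $\mathrm{End}(\mathscr{F})=\mathbb{C}$. The usual obstruction to linearizing the family of isomorphisms $\phi_t:t^{*}\mathscr{F}\to\mathscr{F}$ is then a class in $H^{2}(\mathbb{T},\mathbb{C}^{*})$, which is trivial for a torus. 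After rescaling, the $\phi_t$ therefore satisfy the cocycle condition, and $\mathscr{F}$ acquires a $\mathbb{T}$-equivariant structure.

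This structure is not unique: it can be twisted by characters of $\mathbb{T}$. We fix the choice by a framing, normalizing the equivariant structure on one chart. Remark \ref{Stable remark} then gives a unique tuple of box elements $(b_1,\dots,b_l)$. On each chart $\mathscr{U}_{\sigma_i}$ the toric sheaf $\mathscr{F}$ corresponds to an $S$-family $\{{}_{b_i}F^{\sigma_i}(m)\}$, and from this we read off a well-defined framed characteristic function $\vec\chi$. The next point is to check that this $\vec\chi$ reproduces $P_{\Xi}$. Tensoring with $\Xi^{\vee}$ and pushing forward to the coarse space $X$ preserves toric structures, because $\Xi$ carries a fixed equivariant structure. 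Consequently $P_{\Xi}(\mathscr{F})$ is a sum over weights of the dimensions ${}_{b_i}\chi^{\sigma_i}(m)$, combined through the Čech/gluing data. This shows $\vec\chi\in(\chi_{P_{\Xi}})^{fr}$, and by the finiteness result stated before the theorem there are only finitely many such $\vec\chi$.

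The second step upgrades this pointwise statement to an isomorphism of schemes. The natural strategy is to compare moduli functors. For a $\mathbb{C}$-scheme $S$, a morphism $S\to(M^{s}_{P_{\Xi}})^{\mathbb T}$ corresponds, up to the usual twisting by line bundles from $S$, to a flat family of stable sheaves on $\sX\times S$ that is fixed by $\mathbb{T}$. The linearization argument above works in families: the automorphism sheaf of a stable family is $\mathcal{O}_S^{*}$, and the obstruction vanishes locally on $S$. The resulting family is therefore $\mathbb{T}$-equivariant, and the framing rigidifies the choice of equivariant structure.

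Because $\mathbb{T}$ is linearly reductive, an equivariant flat family decomposes into weight spaces. On each chart $\mathscr{U}_{\sigma_i}\times S$ it becomes a family of $S$-families of locally free $\mathcal{O}_S$-modules. Their ranks are locally constant on $S$, so $S$ splits into open and closed pieces $S_{\vec\chi}$. Each piece is exactly an $S$-point of $M^{s}_{\vec\chi}$, the moduli space of stable framed toric sheaves built by GIT earlier in the paper. Conversely, the forgetful map sends $M^{s}_{\vec\chi}$ into the fixed locus. These two constructions are mutually inverse natural transformations. Hence they are isomorphic as functors, and since both sides are corepresented by, or are, quasi-projective schemes, the schemes themselves are isomorphic. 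Canonicity comes from the canonical framing.

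The main obstacle is the equivariant-structure step in families on the stack. We must show that a $\mathbb{T}$-fixed family admits an equivariant structure compatible with the box-element data, namely that the choice of $b_i$ is locally constant in $S$ and that the framing kills every twist ambiguity. Our first attempt would be to use connectedness of $\mathbb{T}$: the box elements form a discrete set, so the $b_i$ are constant on connected components of $S$. We would then apply the triviality of $H^{1}(\mathbb{T},\mathcal{O}_S^{*})$ modulo characters to remove the remaining ambiguity. A secondary difficulty is checking that GIT stability for $M^{s}_{\vec\chi}$ agrees with modified Gieseker stability. For this we would compare the two stability conditions through the equivariant subsheaves: for a toric sheaf it suffices to test equivariant saturated subsheaves, because the Harder–Narasimhan filtration, and the socle of a semistable sheaf, are $\mathbb{T}$-invariant.
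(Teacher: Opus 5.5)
\textbf{There is a genuine gap in your second step.} There you identify morphisms $S\to(M^{s}_{P_{\Xi}})^{\mathbb T}$ with $\mathbb{T}$-fixed flat families on $\sX\times S$, and then conclude from an isomorphism of functors. But $M^{s}_{P_{\Xi}}$ is only a coarse moduli space: it corepresents $\mathscr{M}^{s}_{P_{\Xi}}$ and does not represent it.

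So an $S$-point of $M^{s}_{P_{\Xi}}$, let alone of its closed subscheme $(M^{s}_{P_{\Xi}})^{\mathbb T}$, need not come from any family on $\sX\times S$, even up to twisting by line bundles from $S$. There is in general no universal family, and even \'etale locally you must handle the $PGL$-torsor from the Quot-scheme construction. Hence the transformation from $\mathrm{Hom}(-,(M^{s}_{P_{\Xi}})^{\mathbb T})$ to $\coprod_{\vec\chi}\mathscr{M}^{s}_{\vec\chi}$ that you want to invert is not defined.

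Suppose you did have an isomorphism between $\coprod\mathscr{M}^{s}_{\vec\chi}$ and some $\mathbb{T}$-fixed subfunctor of $\mathscr{M}^{s}_{P_{\Xi}}$. You still could not conclude, because corepresentability does not pass to subfunctors or fixed loci. Nothing shows that the scheme-theoretic fixed locus of the corepresenting scheme corepresents that subfunctor, so ``isomorphic functors, hence isomorphic schemes'' does not apply.

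The paper, following \cite{Koo2}, avoids exactly this problem. It takes the forgetful morphism $F$, which is bijective on closed points onto the fixed locus (Theorem \ref{claim}). It then applies Kool's criterion (Prop.~4.7 of \cite{Koo2}): such a morphism of finite-type $\mathbb{C}$-schemes is an isomorphism onto the closed subscheme $(M^{s}_{P_{\Xi}})^{\mathbb T}$ once it is bijective on $A$-valued points for every local Artinian $\mathbb{C}$-algebra $A$ with residue field $\mathbb{C}$. Over such $A$ the coarse space does behave like a fine one, since there is no Picard or Brauer obstruction on $\mathrm{Spec}\,A$. The bijection is then proved by induction on the length of $A$. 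The inductive step (Prop.~4.8 of \cite{Koo2}) is a deformation-theoretic lifting of the equivariant structure of a $\mathbb{T}_{cl}$-fixed family along a small extension. Your stated ``main obstacle'' (local constancy of the box elements, killing twists by characters) is not where the difficulty lies; the missing idea is this reduction to Artinian points.

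\textbf{Second, the stability matching is not supplied.} Identifying the pieces $S_{\vec\chi}$ with points of $M^{s}_{\vec\chi}$ presupposes that, for the linearization used to build $M^{s}_{\vec\chi}$, GIT stability of framed $S$-families coincides with modified Gieseker stability. Reducing stability to equivariant subsheaves, as you suggest, is only half of this. You still have to construct an ample $\mathscr{L}\in Pic^{G}(\mathscr{N}_{\vec\chi})$ whose Hilbert--Mumford weights reproduce the modified Hilbert polynomial inequalities for those equivariant subsheaves.

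In the paper this is Theorem \ref{Gieseker}. It uses the uniqueness of the box summand on each chart to normalize the box elements to zero by twisting with an equivariant line bundle. It then uses orbifold Riemann--Roch to see that only the untwisted sector, through the rank of $\Xi$, contributes to the leading terms that determine the linearization. Without this, the right-hand side of the statement is not known to parametrize the sheaves you are mapping to.
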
 \par

This theorem provides a concrete description of the torus-fixed points in the moduli space by relating them to moduli spaces parametrizing sheaves with prescribed combinatorial data encoded by characteristic functions.

Following the same framework, we obtain a similar decomposition result for slope-stable toric reflexive sheaves on $\sX$. Reflexive sheaves form an important subclass of torsion-free sheaves, often arising in contexts such as the study of instantons and stable bundles. Now, let $P_{\Xi}$ be the choice of a Hilbert polynomial for a reflexive sheaf and denote the moduli space of modified slope-stable reflexive sheaves by $M^{\mu s}_{P_{\Xi}}$. We denote the set of framed characteristic functions for toric reflexive sheaves giving rise to the above choice of Hilbert polynomial given by $(\chi^r_{P_{\Xi}})^{fr}$. We denote by $M^{\mu s}_{\vec\chi}$ the moduli space of the stable toric reflexive sheaves with characteristic function $\vec\chi.$ Then the following holds (Theorem \ref{Final2}).
\begin{thm}
Let $\sX$ be a smooth projective toric DM stack with $X$ as its projective normal coarse moduli space, an ample line bundle $H$ and a generating sheaf $\Xi$ with a fixed equivariant structure. Let $P_{\Xi}$ be a choice of a modified Hilbert polynomial of a reflexive sheaf. Then we have a canonical isomorphism of quasi-projective $\mathbb C$ schemes between the fixed point locus and the disjoint union of properly GIT stable moduli spaces with framed characteristic functions corresponding to the Hilbert polynomial given as,
$$(M^{\mu s}_{P_{\Xi}})^{\mathbb T} \cong \coprod \limits_{\vec\chi\in (\chi^r_{P_{\Xi}})^{fr}}M^{\mu s}_{\vec\chi}.$$
\end{thm}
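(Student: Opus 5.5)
The proof follows the strategy of Kool for toric varieties, adapted to the stacky setting. It proceeds through the torsion-free case (Theorem~\ref{Theorem 4.5}) and then restricts to the reflexive locus.

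\emph{First step: pointwise description of the fixed locus.} A closed point $[\mathscr{F}]\in (M^{\mu s}_{P_{\Xi}})^{\mathbb T}$ is a modified $\mu$-stable reflexive sheaf with $t^*\mathscr{F}\cong\mathscr{F}$ for all $t\in\mathbb T$. Since stable sheaves are simple, $\mathrm{Aut}(\mathscr{F})=\mathbb{C}^*$. The usual obstruction argument then applies: the obstruction to lifting the isomorphisms $t^*\mathscr{F}\cong \mathscr{F}$ to a $\mathbb T$-equivariant structure lies in $H^2(\mathbb T,\mathbb{C}^*)$ of a torus, which vanishes. Hence $\mathscr{F}$ admits an equivariant structure, unique up to a character of $\mathbb T$. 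Fixing this ambiguity is exactly what the \emph{framing} in $(\chi^r_{P_{\Xi}})^{fr}$ does. By Remark~\ref{Stable remark}, stability also pins down the box elements $b_i$ on each chart $\mathscr{U}_{\sigma_i}$.

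\emph{Second step: translation to characteristic functions.} Restricting to each chart and taking weight spaces gives, via the stacky Klyachko--Perling description developed earlier, a $\vec\chi$-family; in the reflexive case this is a collection of filtrations. Its characteristic function $\vec\chi$ determines $P_{\Xi}$, so $\vec\chi\in(\chi^r_{P_{\Xi}})^{fr}$, and only finitely many such $\vec\chi$ occur. Next I would show that modified $\mu$-stability of $\mathscr{F}$ is equivalent to the GIT (properly) stability defining $M^{\mu s}_{\vec\chi}$. The key input is that destabilizing subsheaves of an equivariant sheaf can be taken equivariant (the Harder--Narasimhan/socle uniqueness argument). Their modified slopes are then computed combinatorially from sub-filtrations, which matches the Hilbert--Mumford numerical criterion for the GIT problem. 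This gives a bijection on closed points.

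\emph{Third step: upgrade to an isomorphism of schemes, the main obstacle.} I would compare moduli functors. A family of framed equivariant stable reflexive sheaves with characteristic function $\vec\chi$ over a base $S$ yields a $\mathbb T$-equivariant morphism $S\to M^{\mu s}_{P_{\Xi}}$ with trivial action on $S$, so it factors through the fixed locus. This gives a morphism $\coprod_{\vec\chi} M^{\mu s}_{\vec\chi}\to (M^{\mu s}_{P_{\Xi}})^{\mathbb T}$. For the inverse, I would work \'etale locally on the fixed locus, where a universal family exists. On a family over a base $S$ with trivial $\mathbb T$-action, one must construct an equivariant structure, which is the step I expect to be hardest. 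The approach is to argue as in Kool: use the vanishing of $H^2(\mathbb T,\mathcal{O}_S^*)$ after passing to a connected base, together with the rigidity of simple families. This lifts the equivariant structure uniquely after framing. One must also check that the characteristic function is locally constant on $S$, which follows from flatness of the weight-space decomposition. Reflexivity is open in flat families, and by the first step the isomorphism type of the filtrations is locally constant. So the inverse lands in the reflexive GIT quotients, giving the canonical isomorphism of quasi-projective schemes. The finiteness of $(\chi^r_{P_{\Xi}})^{fr}$ makes the coproduct of finite type.
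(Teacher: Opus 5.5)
Your first two steps are essentially the paper's argument, which the paper itself only sketches by reference to \cite{Koo2}. The matching ingredients are these.
\begin{itemize}
\item A $\mathbb T$-fixed simple sheaf carries an equivariant structure unique up to a character, and the framing pins it down via $0\to M\to Pic_{\mathbb T}(\sX)\to Pic(\sX)\to 0$.
\item Theorem~\ref{Reflexive hull} gives one box summand per chart.
\item Equivariant subsheaves suffice to test stability.
\item The line bundle $\mathscr{L}^{\mu}$ of Theorem~\ref{slope stability} matches modified $\mu$-stability with GIT stability on $M^r_{\vec\chi}$.
\end{itemize}

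You genuinely diverge in the upgrade from closed points to schemes. The paper never constructs an inverse over general bases. It applies \cite[Prop.~4.7]{Koo2} to the forgetful map and the closed subscheme $(M^{\mu s}_{P_{\Xi}})^{\mathbb T}$: this reduces the isomorphism to a bijection on $A$-valued points, for local Artinian $\mathbb C$-algebras $A$ with residue field $\mathbb C$. It then verifies that bijection by induction on the length of $A$ \cite[Prop.~4.8]{Koo2}. So equivariant structures only ever have to be produced over local Artinian bases.

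Your route instead builds the inverse $(M^{\mu s}_{P_{\Xi}})^{\mathbb T}\to\coprod M^{\mu s}_{\vec\chi}$ functorially from \'etale-local universal families. This is more conceptual and exhibits the inverse directly, but it costs more. You need \'etale-local universal families on both sides; on $M^{\mu s}_{\vec\chi}$ this is needed to see that the composite is the identity, using that $PGL(r)$ acts freely on the stable locus. You also have to equivariantize families over arbitrary, possibly non-reduced and non-normal, bases. That is exactly what the Artinian criterion is designed to avoid, and it is not what Kool does.

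Two caveats on your version.

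\emph{The mechanism for the hard step is too optimistic.} Connectedness of $S$ does not kill the obstruction. The pairs $(t,\phi_t)$ form a central extension of $\mathbb T_S$ by $\mathbb G_{m,S}$ of multiplicative type. Such extensions are classified by $H^1_{\mathrm{et}}(S,\mathbb Z)^{\oplus d}$, which is nonzero for instance over a nodal curve. So the extension need not split, and its underlying torsor need not be trivial, over a connected base. You must pass to \'etale neighbourhoods on which the extension splits (it always does \'etale-locally). You then glue, using that after framing the equivariant structure is unique.

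\emph{The finiteness claim is misplaced.} $(\chi^r_{P_{\Xi}})^{fr}$ itself need not be finite. Take $\mathbb P^1\times\mathbb P^1$ with trivial stack structure, $\Xi=\mathcal O$ and $H=\mathcal O(1,1)$: every $\mathcal O(a,0)\oplus\mathcal O(-a,0)$ has Hilbert polynomial $2(m+1)^2$. What is true, and what the paper uses, is that only finitely many $\vec\chi$ have $M^{\mu s}_{\vec\chi}\neq\emptyset$. The paper deduces this from the closed-point bijection onto a finite type scheme. That, not finiteness of the index set, is why the coproduct is of finite type.
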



In this paper, we generalize the work of \cite{Kly}, \cite{Per}, \cite{Koo1}, \cite{Koo2}, who study the moduli of pure sheaves with supports on smooth toric varieties. In the smooth toric variety setting, the natural action of the algebraic torus lifts to the moduli space of Gieseker stable sheaves, and the explicit description of the fixed point locus with a fixed Hilbert polynomial can be obtained in terms of properly GIT stable moduli of sheaves with fixed characteristic functions. These descriptions are combinatorial in nature, and such a decomposition of the fixed point locus is achieved by matching Gieseker stability with GIT stability—a key technical insight developed in \cite{Koo1}, \cite{Koo2}. In \cite{Koo3}, the above-mentioned techniques have been successfully used to understand and compute topological properties, such as Betti numbers and Euler characteristics, of moduli spaces of stable sheaves on smooth toric varieties.

Extending these ideas to the setting of smooth projective toric DM stacks, we generalize the techniques of \cite{GJK}, where the authors apply similar methods to study moduli of torsion-free sheaves on weighted projective planes. Subsequently, this work was extended to the case of stacky Hirzebruch orbifolds by \cite{WW}, providing important evidence that such decomposition results hold in a broader class of stacks.

Here, we study the moduli of torsion-free sheaves on an arbitrary smooth projective toric DM stack following the techniques developed in the aforementioned works. A key step in our approach is a gluing formula for torsion-free toric sheaves. We describe sheaves on the open substacks $\mathscr{U_{\sigma}}\cong [{\mathbb{C}^d}/{G_{\sigma}}] $ corresponding to top cones as bi-graded modules with respect to the $X(\mathbb T)$ and $X(G_{\sigma})$ gradings. This local data yields vector spaces equipped with fine gradings, referred to as stacky $S$-families. We then formulate a Gluing condition that specifies when these local data patch together to define a global toric torsion-free sheaf. This condition is derived by analyzing the intersection of two open substacks associated with distinct top cones, pulling back the local data under étale morphisms, and imposing compatibility equalities. The general Gluing formula presents significant challenges. Applying the Gluing formula we provide conditions in examples where a certain higher rank toric reflexive sheaf decomposes equivariantly into direct sum of toric reflexive sheaves of smaller rank. Moreover, we also show that the characteristic function corresponding to a modified stable toric torsion free sheaf has one box summand per chart. This enables us to match the GIT stability to the Gieseker/slope stability.

Following \cite{Koo2} and \cite{WW}, we obtain combinatorial descriptions of characteristic functions and construct moduli spaces of properly GIT stable sheaves with fixed characteristic functions. Adopting the same recipe as in \cite{Dol}, we construct equivariant line bundles that establish a correspondence between GIT stability and modified Gieseker stability. At this stage, we rely on \cite{Nir}, where moduli spaces of Gieseker semistable sheaves on smooth projective DM stacks are constructed by defining the modified Hilbert polynomial.

In the final section, we verify that the torus action lifts naturally to the moduli space of modified Gieseker stable sheaves. Using deformation-theoretic arguments analogous to those in the earlier references, we establish the desired decomposition result for the fixed point locus. We conclude the paper by working out explicit examples of the generating functions of rank 1 torsion free modified slope-stable sheaves for smooth toric surface DM stacks.

The work of \cite{GJK} presents interesting generalizations of the computations in \cite{Koo3}, extending them to more singular ambient spaces. The results of \cite{Got} and \cite{LGot} have been extended to the stacky Hirzebruch orbifold case for ranks 1 and 2 in \cite{WW}. Extending such investigations to higher-dimensional stacky toric threefolds remains an open and promising direction. Additionally, similar studies on weighted blow-ups of toric stacks may yield new insights into wall-crossing phenomena in moduli spaces. Moreover, the explicit description of the fixed point locus is a powerful tool for computing invariants such as Euler characteristics via torus localization techniques. We plan to address these questions in future work.

\subsection{Outline}
Here is the short outline of the paper.  In \S \ref{sec_preliminaries} we review the modified slope stability for torsion free sheaves on a smooth projective Deligne-Mumford stack $\sX.$ Next we recall the smooth toric DM stacks as in \cite{BCS}. Describing $S$-families we describe the toric torsion free sheaves and prove a Gluing formula (\ref{Gluing Formula}). We define characteristic functions \S \ref{Char} next and achieve a decomposition (\ref{Theorem 4.5}) as in (\cite{Koo2},Cor 4.10) after matching GIT and modified Gieseker stability \S \ref{GIT and Gieseker}. 


\subsection*{Acknowledgments}

I would like to thank Yunfeng Jiang for his constant help, discussions, and guidance while preparing this manuscript. I have benefited from conversations with Amin Gholampour, Dan Edidin, Jayan Mukherjee, Yohsuke Imagi, Daniel Skodlerack, and Ziyu Zhang regarding the details of this paper.


\section{Preliminaries on modified stability}\label{sec_preliminaries}

In this section, we review the modified semi-stability for Deligne-Mumford stacks, recalling the necessary definitions.

\subsection{Notations}

We fix some notation for a smooth projective  Deligne-Mumford stack $\sX$ of dimension $d$ with coarse moduli space $X,$ denoted by $\pi:\sX \to X$. This section necessarily works out for any tame DM stack, but in characteristic 0 we have the tame condition irrespective.

Let $\sI$ be the index set of the components of the inertia stack 
$I\sX$ such that
$$I\sX=\bigsqcup_{g\in\sI}\sX_g.$$
The index set $\sI$ consists of conjugacy classes $(g)$ of the local stacky group $G$ of $\sX$. 
We always use $\sX_0=\sX$ to represent the trivial component.  For example, if $\sX=[Z/G]$ is a global  quotient stack, where $Z$ is a quasi-projective scheme and $G$ is a diagonalizable group scheme acting linearly on $Z$, then 
$I\sX=\bigsqcup_{(g)}[Z^g/C(g)]$ where $(g)\in G$ represent the conjugacy classes of the geometric points in $G$ that fix geometric points in $Z$ denoted by $Z^g$ and $C(g)$ represent the centralizer of $g\in G$. Any component $\sX_g\subset \sX$ in the inertia stack $I\sX$ is a closed substack of $\sX$ that is isomorphic to $[Z^g/C(g)]$. $Z^g$ embeds in $Z$ which is equivariant under the action of $C(g)$ (as a subgroup of $G$)  and $G,$ respectively which explains the component of inertia being a closed substack.
For all the discussions regarding Orbifold Riemann Roch we cite \cite[\S 3.3]{Nir}  or \cite{Toe} and \cite{Tse} as references.
We denote by $I\sX_1\subset I\sX$  the substack of $I\sX$ consisting of components $\sX_g$ such that their codimension in $\sX$ is one. 
Let $\pr: I\sX\to \sX$ be the map from the inertia stack $I\sX$ to $\sX$. 

For $\sX$, we write 
$$H_{\CR}^*(\sX)=H^*(I\sX)=\bigoplus_{g\in\sI}H^*(\sX_g)$$
to be the Chen-Ruan cohomology with $\qq$-coefficients. For any torsion free coherent sheaf $E$ on $\sX$, we use $c_i(E)$ to represent the Chern classes of $E$ on $\sX$, and $c_i(E)\in H^{2i}_{\CR}(\sX)$. 

In the component $\sX_g\subset I\sX$, at a point $(x,g)\in \sX_g$, let 
$$T_{x}\sX=\bigoplus_{0\leq f<1}\left(T_{x}\sX\right)_{g,f}$$
be the eigenspace decomposition of $T_{x}\sX$ with respect to the stabilizer action and $g$ acts on $\left(T_{x}\sX\right)_{g,f}$ by $e^{2\pi i f}$ where $f=e/m,e,m\in \mathbb Z, 0\leq e<m,$ where $m$ is the order of the cyclic group generated by $<g>.$ See (\cite{Nir},Section 3.3) for more details.

Let $E\in\Coh(\sX)$ be a coherent sheaf on $\sX$, we have an eigenbundle decomposition of $\pr^*E $ and on $\pr^*E|_{\sX_g}$ we have
$$\pr^*E|_{\sX_g}=\bigoplus_{0\leq f<1}(\pr^*E)_{g,f}$$
with respect to the action of the stabilizer of $\sX_g$, where the element $g$ acts on $(\pr^*E)_{g,f}$ by $e^{2\pi i f}$.  
Then the orbifold Chern character is:
\begin{equation}\label{eqn_chern}
\widetilde{\Ch}(E)=\bigoplus_{g\in \sI}\sum_{0\leq f<1}e^{2\pi i f}\Ch((\pr^*E)_{g,f}),
\end{equation}
where $\Ch$ is the general Chern character.  Let $l_{g,f}$ be the rank of $(\pr^*E)_{g,f}$.   The orbifold Todd class of $T\sX$ is given by 
\begin{equation}\label{Todd}
\widetilde{\Td}(T\sX)=\bigoplus_{g\in \sI}\prod_{\substack{0\leq f<1\\
1\leq i\leq r_{g,f}}}\frac{1}{1-e^{-2\pi i f}e^{-x_{g,f,i}}}\prod_{f=0}\frac{x_{g,0,i}}{1-e^{-x_{g,0,i}}},
\end{equation}
where $(\pr^*T\sX)_{g,f}$ has rank $r_{g,f}$ and $x_{g,f,i}$ are Chern roots. 

For any coherent sheaf $E$ on $\sX$, the orbifold Riemann-Roch theorem \cite[Theorem 5.4]{Toe} gives:
\begin{equation}\label{eqn_orbifold_RR}
\chi(\sX, E)=\int_{I\sX}\widetilde{\Ch}(E)\cdot \widetilde{\Td}(T\sX).
\end{equation}

\subsection{Modified stability}

Let $\sX$ be a smooth tame projective Deligne-Mumford stack of dimension $d$.  We  choose the polarization $\sO_X(1)$ on its coarse moduli space $\pi: \sX\to X$.
Let $H:=c_1(\sO_X(1))$.
Recall from \cite[\S 2]{Nir}, 

\begin{defn}\label{defn_generating_sheaf}
A locally free sheaf $\Xi$ on $\sX$ is $\pi$-very ample if  for every geometric point of $\sX$ the representation of the stabilizer group at that point contains every irreducible representation of the stabilizer group.  We call $\Xi$ a generating sheaf. Such a vector bundle always exists for smooth quotient Deligne-Mumford stacks, see Theorem 5.7 in \cite{OS}.
\end{defn}

Let $\Xi$ be a locally free (generating) sheaf  on $\sX$. We define a functor
$$F_{\Xi}: D\Coh_{\sX}\to D\Coh_{X}$$ by
$$F\mapsto \pi_*\cHom_{\sO_\sX}(\Xi, F)$$
and a functor 
$$G_{\Xi}: D\Coh_{X}\to D\Coh_{\sX}$$ by
$$F\mapsto \pi^*F\otimes \Xi.$$
From \cite[\S 5]{OS}, the functor $F_{\Xi}$ is exact since the dual $\Xi^{\vee}$ is locally free and the pushforward $\pi_*$ is exact.  The functor $G_{\Xi}$ is not exact unless $\pi$ is flat. For example, if $\pi$ is a flat gerbe or a root stack, it is flat.

Fix a generating sheaf $\Xi$ on $\sX$. We call the pair $(\Xi, \sO_X(1))$ a polarization of $\sX$. 
Let $E$ be a coherent sheaf on $\sX$, we define the support of $E$ to be the closed substack associated with the ideal
$$0\to \sI\to \sO_{\sX}\to \cEnd_{\sO_{\sX}}(E).$$
So $\dim(\Supp E)$ is the dimension of the substack associated with the ideal $\sI\subset \sO_{\sX}$ since $\sX$ is a Deligne-Mumford stack. 
A pure sheaf of dimension $d$ is a coherent sheaf $E$ such that for every non-zero subsheaf $E^\prime$ the support of $E^\prime$ is of pure dimension $d$. 
For any coherent sheaf $E$, we have the torsion filtration:
$$0\subset T_0(E)\subset \cdots \subset T_l(E)=E$$
where every $T_i(E)/T_{i-1}(E)$ is pure of dimension $i$ or zero, see \cite[\S 1.1.4]{HL}.

\begin{defn}\label{defn_Gieseker}
The modified Hilbert polynomial $H_{\Xi}$ of a coherent sheaf $E$ on $\sX$ is defined as:
$$H_{\Xi}(E,m)=\chi(\sX, E\otimes\Xi^{\vee}\otimes \pi^*\sO_{X}(m))
=H(F_{\Xi}(E)(m))=\chi(X, F_{\Xi}(E)(m))$$ where $H$ is the usual Hilbert polynomial defined on the coarse moduli space.
\end{defn}

Let $E$ be of dimension $d$, then we can write:
$$H_{\Xi}(E,m)=\sum_{i=0}^{d}\alpha_{\Xi, i}(E)\frac{m^i}{i!}$$
which is induced by the case of schemes. The modified Hilbert polynomial is additive on short exact sequences since the functor $F_{\Xi}$ is exact. If we do not choose the generating sheaf $\Xi$, the Hilbert polynomial $H$ on $\sX$ will be the same as the Hilbert polynomial on the coarse moduli space $X$.  
The {\em reduced modified Hilbert polynomial} for the pure sheaf $E$ is defined as 
$$h_{\Xi}(E)=\frac{H_{\Xi}(E)}{\alpha_{\Xi, d}(E)}.$$
Let $E$ be a pure coherent sheaf.  We call $E$ Gieseker semistable if for every proper subsheaf $E^\prime\subset E$, 
$$h_{\Xi}(E^\prime)\leq h_{\Xi}(E).$$
We call $E$ stable if $\leq$ is replaced by $<$ in the above inequality.


\begin{defn}(\cite{Nir})\label{defn_modified_slope_stability}
 We define the modified slope of  $E$ by 
 $$\mu_{\Xi}(E)=\frac{\alpha_{\Xi, l-1}(E)}{\alpha_{\Xi, l}(E)}.$$
 Then $E$ is
modified slope (semi)stable if, for every proper subsheaf $F\subset E$, 
$$\mu_{\Xi}(F)(\leq) < \mu_{\Xi}(E).$$
\end{defn}
The notion of $\mu$-stability and semistability is related to the Gieseker stability and semistability in the same way as schemes, i.e.,
$$\mu-\text{stable}\Rightarrow \text{Gieseker stable}\Rightarrow \text{Gieseker semistable}\Rightarrow \mu-\text{semistable}$$

\begin{rmk}\label{rmk_stability_quotient}
Recall that  $I\sX_1\subset I\sX$ is the substack of $I\sX$ consisting of components such that the codimension of $\sX_g\subset \sX$  is one. 
Suppose our Deligne-Mumford stack $\sX$ is a global quotient stack, which means $\sX=[Z/G]$ where $Z$ is a quasi-projective scheme and $G$ is a diagonalizable group scheme. Assume that we can choose the generating sheaf $\Xi$ on $\sX$ such that its restriction on any component in $I\sX_1$ is a sum of  locally free sheaves of  the same rank. If the sheaf $E$ has dimension $d$, then    \cite[Proposition 3.18]{Nir} shows that 
$$\deg_{\Xi}(E)=\frac{1}{\rk(\Xi)}\alpha_{\Xi, d-1}(E)-\frac{\rk(E)}{\rk(\Xi)}\alpha_{\Xi,d-1}(\sO_{\sX}).$$
Here $\alpha_{\Xi, d-1}(E)=\int_{\sX}c_1(E)\cdot H$. 
\end{rmk}

We note that in \cite[Proposition 2.3]{Nir} such a generating sheaf exists in the case of toric orbifold DM stacks.
\section{Toric Sheaves on Toric DM stacks}
\subsection{Toric DM stacks}

In this section 
we always fix $\sX$ as a smooth toric projective Deligne-Mumford stack of dimension $d$. We always fix a generating sheaf with a fixed torus equivariant structure $\Xi$ on $\sX.$ We talk about modified semi-stable sheaves in characteristic $0$ with respect to  $\Xi,\ \pi^*H$ where $H$ is an ample line bundle on $X,$ which is a projective normal toric variety as the coarse moduli space and $\pi:\sX\to X$ is the coarse moduli map. For the rest of the paper we always work with this polarization data $
 (\Xi,\ \pi^*H).$ \par 
Most of the materials for this section are from \cite{BCS},\cite{FMN} and \cite{JT}. We recall the necessary things about smooth toric DM stacks.\par
Let $$\beta:\mathbb{Z}^n \to N$$ be a map of abelian groups with finite cokernel and $N$ is finitely generated of rank $d.$ We write $\overline{N}$ to denote the lattice generated by $N$ in $N_{\mathbb Q}.$ Let $\Sigma$ be a rational simplicial fan in $N_{\mathbb Q}$; every cone is generated by linearly independent vectors. Let $\rho_{i},i=1,,n$ be the rays in $\Sigma$ spanning $N_{\mathbb Q}$ and let integers $b_i$ generate $\rho_i.$ Denote the map $\beta$ by the images $(b_1,..,b_n).$ The triple $(N,\Sigma,\beta)$ is our stacky fan.\par
Corresponding to the map $\beta,$ we consider the following exact sequence obtained from applying Gale Duality,
$$0\to N^{*}\to \mathbb{Z}^n \xrightarrow{\beta^{\vee}} H^{1}(Cone(\beta)^{*})\to Ext^{1}(N,\mathbb{Z})\to 0.$$ Taking dual with $\mathbb{C}^{*}$ we obtain 
$$1\to \mathscr{K} \to G \xrightarrow{\alpha} (\mathbb{C^{*}})^{n}\to \mathbb{T} \to 1.$$ 
We explain the notation as follows.\\$\ \mathscr{K}=Hom(Ext^{1}(N,\mathbb{Z}),\mathbb{C}^{*}),\ G= Hom(H^{1}(Cone(\beta)^{*}),\mathbb{C}^{*}).$
We also refer as, $DG(\beta):=H^{1}(Cone(\beta)^*).$\par
The stacky fan comes with a group action on a quasi affine variety $Z.$ This action is described by using $\mathbb{C}[x_1,..,x_n]$ as the affine co-ordinate ring of $\mathbb{C}^n$ and we consider the irrelevant ideal given by, $J_{\Sigma}=<\prod_{\rho\nsubseteq \sigma} x_i :\sigma \in \Sigma >$ and $Z:=\mathbb{C}^n-V(J_{\Sigma}).$ The $\mathbb{C}$-valued points of $Z$ are given by $z\in \mathbb{C}^n$ such that the cone generated by $\rho_i:z_i=0$ belongs to $\Sigma.$ The natural multiplication action on $(\mathbb{C^*})^n$ extends to $G$ action on the affine variety via the map $\alpha$ and $Z$ being $G$-invariant we have an action of $G$ on $Z$ and we have $\sX(\Sigma):=[Z/G]$ the quotient stack associated to the groupoid
$s,t:Z\times G \to Z,s,t$ being projection on first factor and action respectively.\par 
In \cite{FMN} they show that the above definition of smooth toric DM stack as a stacky fan can also be realized as the lifting of the torus action from itself onto the stack as in the case of smooth toric varieties. This equivalence of ideas is what we exploit next to describe coherent sheaf on such stacks. \par 
We have the following classification of smooth toric DM stacks in \cite{FMN}. If $N$ is free, the corresponding stack is a toric orbifold and the torus is given by $(\mathbb{C^*})^d.$ If $N$ has torsion and $\beta$ generates the torsion of $N$ then the quotient stack is a $\mathscr{K}$-gerbe over its rigidification \cite{JT},\cite{FMN}. If $\beta$ does not generate the torsion part, the stack is still a $\mathscr{K}'$-gerbe over its rigidification where $K'$ is a subgroup pf $\mathscr{K}.$\par 
$\sX$ that admits an open cover by substacks of the form $\mathscr{U}_{\sigma}\cong[\mathbb{C}^d/G_{\sigma}]$ when we consider the open substacks corresponding to the top cones. In the next section, we will exploit the nature of such open covers, describe torsion free coherent sheaves upon them, and describe the sheaves through Gluing.
\subsection{Toric sheaves on Toric DM stacks}
In this section, we will briefly recall (\cite{GJK}, Section 3). Let $\mathbb{T}$ act linearly and non degenerately on $\mathbb{C}^d$ ( this is the action for the torus on smooth toric DM stacks ) and let the action be given by $$\lambda.x_i=\chi(m_i)(\lambda).x_i,$$ where the choice of co-ordinates is unique up to scaling and re-ordering of $x_i,\ m_i\in X(\mathbb{T})$ and $\chi(m_i)$ denotes the function as a character.
\begin{defn}
The box associated to such an action as above $B_{\mathbb{T}}\subset X(\mathbb{T})$ (the character group of the torus), is the collection of all elements of the form $\sum_{i=1}^{d}q_im_i$ with rational $0\leq q_1,..,q_d<1$ \label{Box}.    
\end{defn}
For a fixed top cone $\sigma,$ and for the torus action on $\mathscr{U}_{\sigma}$ we denote the box of the action by $B_{\sigma}(\mathbb{T}).$  In particular, for any non-degenerate integer matrix $M_{d\times d}$ we can consider the box given by its columns. In our case, this matrix will be given by the torus action on affine spaces given by the characters of the torus, and hence given by columns of integers associated with the character. The size of the box for such a matrix is given by the determinant of the matrix and is equal to the size of the co-kernel of the map defined by the matrix. We deal with non primitive actions in this paper, which means the box for such action is non-trivial. \par 
Following the above notation, we will refer to a sheaf $\mathscr{F}$ as a torus equivariant or toric sheaf on $\mathbb{C}^d.$ Given such an equivariant sheaf, let
$$H^{0}(\mathbb{C}^d,\mathscr{F})=\bigoplus_{m\in X(\mathbb{T})}F(m)$$ be the corresponding graded $X(\mathbb{T})$ module. Multiplication by $x_i$ gives maps
$$\chi_i(m):F(m)\to F(m+m_i),$$ for all $m\in X(\mathbb{T})$ that further satisfy
$$\chi_{j}(m+m_i)\circ\chi_{i}(m)=\chi_{i}(m+m_j)\circ\chi_{j}(m)$$ for all $m\in X(\mathbb{T}),i,j=1,,d.$
\begin{defn}
Let $X(\mathbb{T})$ be the character group of an algebraic torus $\mathbb{T}$ of dimension $d.$ Suppose that we are given $\mathbb{Z}$-independent elements $m_1,..,m_d \in X(\mathbb{T}).$ An $S- family\ \hat{F}$ consists of the following: a collection of complex vector spaces $\{F(m)\}_{m\in X(\mathbb{T})}$ and linear maps $\{\chi_i(m)\}_{m\in X(\mathbb{T}),i=1,,d}$ satisfying the above compatibility described.
A morphism of $S-families\ \hat{F},\hat{G}$ is a family $\hat{\phi}$ of linear maps
$$\{\phi(m):F(m)\to G(m)\}_{m\in X(\mathbb{T})}$$ commuting with the $\chi_{i}(m).$
\end{defn}
Next we define the box decomposition of a toric sheaf on the open chart $\mathscr{U}_{\sigma}.$
\begin{defn}
For an $S-family\ \hat{F}$ we define $_{b}\hat{F}$ to be the $S-subfamily$ where $b\in B_{\sigma}(\mathbb{T})$ consisting of all vector spaces $F(m+b):=_{b}F(m)$ where $m=\sum_{i=1}^{d}l_{i}m_{i}.$
\end{defn}
Then (Prop. 3.4 in \cite{GJK}) is in force. We recall it here. Denote the toric subsheaf by $_{b}\mathscr{F}$ corresponding to the $S-subfamily,\ _{b}\hat{F}.$ Then the following holds $$\mathscr{F}\cong{\bigoplus _{b\ \in B_{\sigma}(\mathbb{T})}}{_{b}\mathscr{F}}.$$
We refer to \cite{GJK} for the $S$-family description of coherent, torsion free toric sheaves, reflexive sheaves and locally-free sheaves.\par
Next we recall the definition of stacky $S-$ families as sheaves on $[\mathbb{C}^d/H]$ where the action of $H$ a finite abelian group commutes with the torus action.\par
A torus equivariant sheaf $\mathscr{F}$ on $[\mathbb{C}^d/H]$ is equivalent to $H^0(\mathbb{C}^d,\mathscr{O}_{\mathbb{C}^d})$ modules with $X(\mathbb{T})$ grading and $X(H)$ fine-grading.\par 
This description in turn gives us a S-family $\hat{F}$ which for each $m\in X(\mathbb{T})$ is given by,
$$F(m)=\bigoplus_{n\in X(H)}F(m)_{n}.$$ Under the declared co-ordinates since the torus and the $H$ actions commute we further have,
$$h.x_i=\chi(n_i)(h).x_i,$$ for a unique $n_i\in X(H).$
\begin{defn}
A  $stacky\ S-family$ consists of the following: a collection of vector spaces $\{F(m)_n\}_{m\in X(\mathbb{T}),n\in X(H)},$ a collection of linear maps $$\{\chi_{i}(m):F(m)\to F(m+m_i)\}_{i=1,,d,m\in X(\mathbb{T})}$$  satisfying
$$\chi_{i}(m):F(m)_n\to F(m+m_i)_{n+n_i},$$
$$\chi_{j}(m+m_i)\circ\chi_i(m)=\chi_{i}(m+m_j)\circ \chi_{j}(m),$$ for all $i,j=1,,d,m\in X(\mathbb{T}),n\in X(H).$
\end{defn}
The morphisms are given by the morphisms of $S-families $ which respect the fine-grading. The obvious notions of coherent, torsion free, reflexive sheaves as $stacky\ S-families$ generalize in this setting where the underlying $S-family$ has a fine-grading and the morphisms respect the fine-grading as well.

\subsection{Gluing of toric torsion free sheaves}

Let $\mathscr{F}$ be a torsion free coherent sheaf on a Toric DM stack $\sX$ given by $(N,\Sigma,\beta).$ Denote by $\mathscr{U}_1\cong [Z_1/G_{\sigma_1}]$ the open substack of dimension $d$ given by the top dimensional cone $\sigma_1.$ Consider the fibered product, $\pi_i:\mathscr{U}_1\times_{\sX} \mathscr{U}_2\longrightarrow \mathscr{U}_i,i=1,2.$ Consider two top cones $\sigma_1$ and $\sigma_2$ and consider $\alpha: G\rightarrow \mathbb{C^*}^{n}$ corresponding to $\alpha_i ,1 \leq i\leq n \in X(G),$ obtained via Gale Duality. Note that $G\cong \mathbb{C^*}^{n-d}\times \mu_1..\times \mu_{r},$ where $\mu_i$ is the $i-$th group of unity and $r\leq d$. Let us denote the co-ordinates of the ambient $\mathbb{C}^{n}$ and hence the rays by $$(x_1,.,x_n)$$ corresponding to $$(\rho_{1},.,\rho_{n})$$ and let the top cones be generated by the following rays indexed by the ordered tuple $$(x_{\sigma_{1}(1)},.,x_{\sigma_{1}(d)})$$ corresponding to the ordered tuple $$(\sigma_{1}(1),.,\sigma_{1}(d))$$ for $\sigma_1$ and $$(x_{\sigma_{2}(1)},.,x_{\sigma_{2}(d)})$$ corresponding to the ordered tuple $$(\sigma_{2}(1),.,\sigma_{2}(d))$$ for $\sigma_2.$\newline Also note that $$\sigma_{1}(i)=\sigma_{2}(i)\ \forall i=1,.,d-p.$$
$$\sigma_{1}(i)\neq \sigma_{2}(i),\ \forall i=d-p+1,.,d.$$\newline 
Then one obtains $$\mathscr{U}_1\times_{\sX} \mathscr{U}_2 \cong 
[\frac{\mathbb{C}^{d-p}\times G_{\sigma_1\cup\sigma_2}}{G_{\sigma_1} \times G_{\sigma_2}}],
$$
where $$G_{\sigma_1\cup\sigma_2}=\{\lambda\in G|\alpha_i(\lambda)=1 \forall \rho_i \notin \sigma_1 \cup \sigma_2 \}$$ where $\pi_1$ corresponds to $$\pi_1:\mathbb{C}^{d-p}\times G_{\sigma_1\cup\sigma_2} \rightarrow \mathbb{C}^{d-p}\times \mathbb{C^*}^{p}\rightarrow \mathbb{C}^d$$ and is further given by,
$$(\tau_1,..,\tau_{d-p},\lambda)\rightarrow \big((\alpha_{\sigma_{1}(1)}(\lambda)\tau_1,..,\alpha_{\sigma_{1}(d-p)}(\lambda)\tau_{d-p}),\alpha_{\sigma_1(d-p+1)}(\lambda),.,\alpha_{\sigma_{1}(d)}(\lambda)\big).$$\newline Similarly $\pi_2$ corresponds to $$\pi_2:\mathbb{C}^{d-p}\times G_{\sigma_1\cup\sigma_2} \rightarrow \mathbb{C}^{d-p}\times \mathbb{C^*}^{p}\rightarrow\mathbb{C}^d$$ further given by,
$$(\tau_1,,,\tau_{d-p} ,\lambda)\rightarrow \big((\tau_1,..,\tau_{d-p}),\alpha_{\sigma_2(d-p+1)}(\lambda)^{-1},.,\alpha_{\sigma_{2}(d)}(\lambda)^{-1}\big).$$ The morphisms on the groups are obtained from projections on each factor.\par 
Before further simplifications we motivate ourself with this example which tells us how to reduce $\mathscr{U}_1 \times_{\sX} \mathscr{U}_2$.
\begin{example} (\cite{FMN},Ex. 7.31)
Let $\beta:\mathbb{Z}^2\to \mathbb{Z}$ be given  by $a>0,b>0$ and consider the rays in $\mathbb{R}$ given by the rays $(a,-b).$ The corresponding quotient stack is given by $$\mathbb{C}^*\times \mu_d\times (\mathbb{C}^2- \{0\})\to (\mathbb{C}^2-\{0\})$$
$$((\lambda,t),(x_1,x_2))\to (\lambda^{m/a}t^{k_2}x_1,\lambda^{m/b}t^{-k_1}x_2)$$ where,
$$d=gcd(a,b),m=lcm(a,b)$$ and $k_1,k_2$ satisfies
$$bk_1+ak_2=d.$$
In this above example, $\mathscr{U}_1\cong [\mathbb{C}/\mu_a]$ and $\mathscr{U}_2\cong[\mathbb{C}/\mu_b]$ and $\mathscr{U}_1\times_{\sX} \mathscr{U}_2\cong [\mathbb{C}^*\times \mu_d/(\mu_a\times \mu_b)].$ The actions can be derived using the construction of \cite{BCS} and then we can simplify the above by observing that $\mathbb{T}\cong\mathbb{C}^*\cong [\mathbb{C}^*\times \mu_d/(\mu_a\times \mu_b)]\cong[\mathbb{C}^*/\mu_m].$ In the following simplification we generalize this example for two top cones and their intersection.
\end{example}

We further simplify this representation. Note that there is a morphism from $({\mathbb{Z}^d})^{*}$ to $({\mathbb{Z}^{d+p}})^{*}$ given by the union of the two top cones namely,
$$\beta_{\sigma_1\cup\sigma_2}:\mathbb{Z}^{d+p} \to N $$and observe that the Gale Dual of this map is $DG_{\sigma_1\cup\sigma_2}.$ As an application of the naturality of Gale Duality one observes that $DG_{\sigma_1\cup\sigma_2}$ admits a surjection from $DG(\beta)$ and dualizing by $\mathbb{C^*}$ one obtains the group $G_{\sigma_1\cup\sigma_2}$ embedded in $G.$ Further note that via Gale Duality, $DG_{\sigma_1\cup\sigma_2}$ surjects onto $DG_{\sigma_1}$ and $DG_{\sigma_2}$ via the maps say $\phi_1,\phi_2$ respectively. Denote the natural induced map as,$$(\phi_1,\phi_2):DG_{\sigma_1\cup\sigma_2}\to DG_{\sigma_1}\bigoplus DG_{\sigma_2}.$$
Denote by $DH:=(DG_{\sigma_1\cup \sigma_2})_{tor}$ the torsion subgroup of $DG_{\sigma_1\cup \sigma_2}.$\par 
Now we prove a lemma which will be helpful in carrying out the reduction,
\begin{lem}\label{Cokernel}
The surjection $\phi_i$ restricted to $DH:=(DG_{\sigma_1\cup \sigma_2})_{tor}$ is an injection into each summand $DG_{\sigma_i}$ for $i=1,2.$   
\end{lem}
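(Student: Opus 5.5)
We compute the Gale duals explicitly via the cone construction. For a map $\beta_\tau:\mathbb{Z}^k\to N$ the Gale dual is $DG(\beta_\tau)=H^1(Cone(\beta_\tau)^*)$. It sits in the exact sequence
$$0\to N^*\to (\mathbb{Z}^k)^*\to DG(\beta_\tau)\to Ext^1(N,\mathbb{Z})\to 0.$$
Concretely, choosing a free resolution $0\to\mathbb{Z}^{r}\xrightarrow{Q}\mathbb{Z}^{d+r'}\to N\to 0$ of $N$, one has $DG(\beta_\tau)=\mathrm{coker}\big((\mathbb{Z}^{k}\oplus\mathbb{Z}^{r})^*\leftarrow(\mathbb{Z}^{d+r'})^*\big)$, where the map is given by the transpose of $[\beta_\tau\, Q]$. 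We apply this with $\tau=\sigma_1\cup\sigma_2$ (so $k=d+p$) and with $\tau=\sigma_i$ (so $k=d$). By naturality of Gale duality with respect to the inclusions $\mathbb{Z}^d\hookrightarrow\mathbb{Z}^{d+p}$ given by the rays of $\sigma_i$, the map $\phi_i$ is induced by restriction of functionals, namely by forgetting the $p$ coordinates of the rays in $\sigma_{3-i}\setminus\sigma_i$.

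\textbf{Step 1 (kernel of $\phi_i$).} Chasing the two exact sequences, both of which end in $Ext^1(N,\mathbb{Z})$ with the identity map between them, we identify $\ker\phi_i$ with the image of $(\mathbb{Z}^{p})^*$, the functionals on the extra rays $\rho_j$ with $j\in\sigma_{3-i}\setminus\sigma_i$, in $DG_{\sigma_1\cup\sigma_2}$. A snake-lemma argument shows that this image is $(\mathbb{Z}^p)^*$ modulo its intersection with the image of $N^*$.

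\textbf{Step 2 (this kernel is free).} We claim the intersection is zero. Suppose $u\in N^*$ vanishes on all $b_j$ with $j\in\sigma_i$. Since $\sigma_i$ is a top simplicial cone, the $b_j$ for $j\in\sigma_i$ span $N_{\mathbb{Q}}$. Hence $u$ vanishes on $N_{\mathbb{Q}}$, and since $N^*=Hom(N,\mathbb{Z})$ only sees the free part, $u=0$. Therefore $(\mathbb{Z}^p)^*\to DG_{\sigma_1\cup\sigma_2}$ is injective, and $\ker\phi_i\cong\mathbb{Z}^p$ is torsion free.

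\textbf{Step 3 (conclusion).} Since $\ker\phi_i$ is torsion free, $\ker\phi_i\cap DH=0$, so $\phi_i|_{DH}$ is injective into $DG_{\sigma_i}$ for $i=1,2$. As a sanity check, in $DG_{\sigma_i}$ the free part of $(\mathbb{Z}^d)^*/N^*$ vanishes, so $DG_{\sigma_i}$ is finite, consistent with $G_{\sigma_i}$ being finite.

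The main obstacle is the bookkeeping in Step 1 when $N$ has torsion. One must check that $Ext^1(N,\mathbb{Z})$ contributes identically to both sides, so that it does not enlarge $\ker\phi_i$. We would handle this with the explicit cokernel presentation via $Q$: the extra rows coming from $Q$ appear in both presentations and are matched by $\phi_i$, so the kernel computation reduces to the $(\mathbb{Z}^p)^*$ block, exactly as in the torsion-free case.
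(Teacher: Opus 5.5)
Your proof is correct, but it is organized differently from the paper's.

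The paper starts from the same commutative diagram of presentations: rows $[BQ]^*$ and $[B_{\sigma_1}Q]^*$, identity on the left, restriction $i^*$ in the middle. It then changes coordinates by Smith normal form and fixes a splitting $DG_{\sigma_1\cup\sigma_2}\cong DH\oplus\mathbb{Z}^p$. Next it extracts a square block $[B']$ presenting the map on $DH$ alone and asserts that $[B']$ is injective. Finally it applies the snake lemma to this restricted diagram, whose left map $[C]$ is invertible, to get $\ker(\phi_1|_{DH})\cong\ker[B']=0$.

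You never isolate the torsion. You compute the whole kernel, $\ker\phi_i\cong(\mathbb{Z}^p)^*$, and injectivity on $DH$ follows because a torsion-free subgroup meets $DH$ trivially. This gives three advantages:
\begin{itemize}
\item the argument is canonical and needs no choice of splitting;
\item it replaces the rather opaque step ``this forces $[B']$ to be injective'' by a transparent one;
\item it shows exactly where the top-cone hypothesis enters: $N^*\to(\mathbb{Z}^d)^*$ is injective, equivalently $[B_{\sigma_i}Q]$ is nonsingular, because the rays of $\sigma_i$ span $N_{\mathbb{Q}}$.
\end{itemize}

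Your Steps 1 and 2 amount to the snake lemma on the paper's first diagram. Once the bottom row is known to be exact on the left, the snake sequence $\ker(\mathrm{id})\to\ker i^*\to\ker\phi_i\to\operatorname{coker}(\mathrm{id})$ has vanishing outer terms. It therefore gives $\ker\phi_i\cong\ker i^*=(\mathbb{Z}^p)^*$ at once. So the $Ext^1(N,\mathbb{Z})$ bookkeeping you flag as the main obstacle disappears if you work directly with the $Q$-presentation.

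What the paper's route offers is an explicit matrix-level splitting, which it reuses in the next lemma. Your global description of $\ker\phi_i$ serves that purpose more cleanly: it exhibits $DG_{\sigma_i}/\phi_i(DH)$ as the cokernel of a nonsingular $p\times p$ integer matrix.

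Minor slip: in your resolution $r'$ must equal $r$, since $N$ has rank $d$.
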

\begin{proof}
We prove the statement for $i=1.$ From the above discussion,
$$\beta_{\sigma_1\cup\sigma_2}:\mathbb{Z}^{d+p}\to N$$ and using Gale duality we arrive at the following diagram. Denote $DG_{\sigma_1\cup\sigma_2}\cong DH\bigoplus \mathbb{Z}^p$ and we use (\cite{BCS},section 2) as a reference. One has 
\begin{center}
~~~~~~\xymatrix{
  0 \ar[r] & (\mathbb{Z}^{d+r})^{*} \ar[d]_-{\id} \ar[r]^-{[BQ]^{*}} & (\mathbb{Z}^{d+r+p})^{*} \ar[d]_-{i^*} \ar[r]^-{q} & (DH \bigoplus(\mathbb{Z}^{p})^{*}) \ar[d]^-{i^*} \ar[r] & 0 \\
  0 \ar[r] & (\mathbb{Z}^{d+r})^{*} \ar[r]_-{[B_{\sigma_1}Q]^{*}} & (\mathbb{Z}^{d+r})^{*} \ar[r]_-{q} & DG_{\sigma_1} \ar[r] & 0
}
\end{center}
where the first $i^*$ denotes the dual to $i:\mathbb{Z}^d\to \mathbb{Z}^{d+p}$ corresponding to inclusion in first $d$ co-ordinates and the second $i^*$ denotes the induced map on the quotients.\par The map $\beta_{\sigma_1\cup\sigma_2}:\mathbb{Z}^{d+p}\to N$ lifts to the matrix $[B]:\mathbb{Z}^{d+p}\to \mathbb{Z}^{d+r}$ and $N$ being finitely generated abelian group of rank $d$ we have the following projective resolution of $N$ given by the matrix $[Q]$ as,
$$0\to \mathbb{Z}^{r}\xrightarrow{[Q]} \mathbb{Z}^{d+r}\to 0.$$ We denote by $$[B_{\sigma_1}]:\mathbb{Z}^{d}\to \mathbb{Z}^{d}$$ the matrix corresponding to the columns of $\sigma_1.$ 
Next we employ a change of co-ordinates for the matrices $[BQ],[B_{\sigma_1}Q]$ using the Smith-Normal form and obtain the following description given by,
\begin{center}
~~~~~~\xymatrix{
  0 \ar[r] & (\mathbb{Z}^{d+r})^{*} \ar[d]_-{[C]} \ar[r]^-{[\Lambda]} & (\mathbb{Z}^{d+r+p})^{*} \ar[d]_-{[B'Q']} \ar[r]^-{q} & (DH \bigoplus(\mathbb{Z}^{p})^{*}) \ar[d]^-{[\overline{B'Q'}]} \ar[r] & 0 \\
  0 \ar[r] & (\mathbb{Z}^{d+r})^{*} \ar[r]_-{[A]} & (\mathbb{Z}^{d+r})^{*} \ar[r]_-{q} & DG_{\sigma_1} \ar[r] & 0
}
\end{center}
where we represent the $i$-th row and $j$th column of the matrix as follows,
$[C]$ is an invertible matrix, $[\Lambda],[A]$ being injective diagonal non-singular matrices.

$[B'Q']$ represents $(d+r)\times (d+r+p)$ matrix where $[B']$ is the $(d+r)\times (d+r)$ block and $q$ denotes the corresponding quotient maps. \par Note that $[BQ]\circ [\Lambda]:(\mathbb{Z}^{d+r})^{*}\to (\mathbb{Z}^{d+r})^{*}$ is an injective matrix. This forces $[B']:(\mathbb{Z}^{d+r})^*\to (\mathbb{Z}^{d+r})^*$ to be injective. The following diagram is evident,
\begin{center}
~~~~~~\xymatrix{
  0 \ar[r] & (\mathbb{Z}^{d+r})^{*} \ar[d]_-{[C]} \ar[r]^-{[\Lambda]} & (\mathbb{Z}^{d+r})^{*} \ar[d]_-{[B']} \ar[r]^-{q} & DH  \ar[d]^-{[\overline{B'}]} \ar[r] & 0 \\
  0 \ar[r] & (\mathbb{Z}^{d+r})^{*} \ar[r]_-{[A]} & (\mathbb{Z}^{d+r})^{*} \ar[r]_-{q} & DG_{\sigma_1} \ar[r] & 0
}
\end{center}
We finish the proof by observing that $[\overline{B'Q'}]|_{DH}:DH\to DG_{\sigma_1}$ is given by $[\overline{B'}]$ and Snake lemma shows that the map is injective.
\end{proof}
Denote the cokernel of the restricted map $(\phi_1,\phi_2):DH\to DG_{\sigma_1}\bigoplus DG_{\sigma_2} $ as $X(K).$ We have the following exact sequence,
\begin{equation}\label{cokernel}
0\to DH\to DG_{\sigma_1}\bigoplus DG_{\sigma_2}\to X(K)\to 0.    
\end{equation}
We denote the image of $(l_1,l_2)\in DG_{\sigma_1}\bigoplus DG_{\sigma_2}$ under the surjection to $X(K)$ by $\overline{(l_1,l_2)}$.\par  Upon dualizing by $\mathbb{C}^{*}$ one obtains the following exact sequence,
\begin{equation}\label{eq:reduction}
1 \to K \to  G_{\sigma_1}\times G_{\sigma_2} \to H \to 1
\end{equation} \par

\begin{thm}
We have the following isomorphism of toric DM stacks given by,
$$\mathscr{U}_1\times_{\sX}\mathscr{U}_{2}\cong [\frac{\mathbb{C}^{d-p}\times \mathbb{C^*}^{p}}{K}].$$   
\end{thm}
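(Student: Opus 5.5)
We start from the presentation obtained above,
$$\mathscr{U}_1\times_{\sX}\mathscr{U}_2\cong\Big[\frac{\mathbb{C}^{d-p}\times G_{\sigma_1\cup\sigma_2}}{G_{\sigma_1}\times G_{\sigma_2}}\Big],$$
and aim to quotient by a subgroup that acts freely and transitively on the $G_{\sigma_1\cup\sigma_2}$ factor. We use the structure $G_{\sigma_1\cup\sigma_2}\cong (\mathbb{C}^*)^p\times H$, where $H=\mathrm{Hom}(DH,\mathbb{C}^*)$ is the dual of the torsion part, as in the splitting $DG_{\sigma_1\cup\sigma_2}\cong DH\oplus\mathbb{Z}^p$ used in Lemma \ref{Cokernel}.

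\textbf{Step 1: the group action.} First we make explicit how $G_{\sigma_1}\times G_{\sigma_2}$ acts on $\mathbb{C}^{d-p}\times G_{\sigma_1\cup\sigma_2}$. Dually, the maps $\phi_1,\phi_2$ give homomorphisms $G_{\sigma_i}\to G_{\sigma_1\cup\sigma_2}$. On the $G_{\sigma_1\cup\sigma_2}$ factor the action is multiplication by $g_1 g_2^{-1}$ through these homomorphisms. On the $\mathbb{C}^{d-p}$ factor it is the linear action through the characters $\alpha_{\sigma_1(i)}$, $i\le d-p$. This is read off from the formulas for $\pi_1,\pi_2$, using the fact that the fibered product is the groupoid quotient.

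\textbf{Step 2: dualize the exact sequence.} Dualizing \eqref{cokernel} shows that the composite $G_{\sigma_1}\times G_{\sigma_2}\to G_{\sigma_1\cup\sigma_2}\to H$ is surjective with kernel $K$; this is exactly \eqref{eq:reduction}. Surjectivity comes from Lemma \ref{Cokernel}: the injectivity of $DH\to DG_{\sigma_1}\oplus DG_{\sigma_2}$ dualizes to surjectivity onto $H$.

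\textbf{Step 3: slice along the connected component.} Consider the closed subscheme $\mathbb{C}^{d-p}\times(\mathbb{C}^*)^p\times\{e\}\subset \mathbb{C}^{d-p}\times G_{\sigma_1\cup\sigma_2}$. By Step 2, $G_{\sigma_1}\times G_{\sigma_2}$ acts transitively on the component set $\pi_0(G_{\sigma_1\cup\sigma_2})=H$, and the stabilizer of the identity component is $K$. The standard induction lemma then gives
$$[Y/\Gamma]\cong[Y_0/\Gamma_0]\quad\text{when } Y=\Gamma\times_{\Gamma_0}Y_0,$$
and applying it yields $[\mathbb{C}^{d-p}\times(\mathbb{C}^*)^p/K]$. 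Concretely, we check that the action map $(G_{\sigma_1}\times G_{\sigma_2})\times_K(\mathbb{C}^{d-p}\times(\mathbb{C}^*)^p)\to\mathbb{C}^{d-p}\times G_{\sigma_1\cup\sigma_2}$ is an isomorphism. This verification can be carried out on characters, using the exactness of \eqref{cokernel}.

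\textbf{Step 4: torus equivariance.} Finally we verify that the isomorphism is $\mathbb{T}$-equivariant, so that it is an isomorphism of toric DM stacks. This follows because $\mathbb{T}$ acts through $(\mathbb{C}^*)^n/G$ compatibly on both presentations. The example of \cite{FMN} above serves as a sanity check: there $p=1$, $d-p=0$, and $K=\mu_m$.

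\textbf{Main obstacle.} The hardest point will be Step 3. We must show that the image of $G_{\sigma_1}\times G_{\sigma_2}\to G_{\sigma_1\cup\sigma_2}$ meets every component and that the map onto $(\mathbb{C}^*)^p$ is surjective. The latter requires the free part $\mathbb{Z}^p$ of $DG_{\sigma_1\cup\sigma_2}$ to inject into $DG_{\sigma_1}\oplus DG_{\sigma_2}$ modulo torsion. We would first try to get this from the Smith normal form diagram in the proof of Lemma \ref{Cokernel}, by tracking the extra $p$ columns (the rays of $\sigma_2\setminus\sigma_1$) into $DG_{\sigma_2}$.
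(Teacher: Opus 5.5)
Your Steps 1--3 are the paper's proof in a cleaner form. The paper splits $G_{\sigma_1\cup\sigma_2}\cong(\mathbb{C}^*)^p\times H$ and quotients first by $K$, which preserves each component. It then quotients by $H$, which permutes the components simply transitively. Your induction isomorphism $[Y/\Gamma]\cong[Y_0/K]$ for $Y=\Gamma\times_K Y_0$ performs both steps at once. It also avoids the paper's loose assertions that $H$ acts on the stack $[Y_0/K]$ and ``trivially on the other factor''; this is not literally true, because the extension $1\to K\to G_{\sigma_1}\times G_{\sigma_2}\to H\to 1$ need not split compatibly with the action. In both versions the only real input is Lemma \ref{Cokernel}, dualized to the surjectivity of $G_{\sigma_1}\times G_{\sigma_2}\to H$, exactly as in your Step 2. (Your convention $g_1g_2^{-1}$ differs from the paper's $(\phi_1,\phi_2)$, which is dual to $(g_1,g_2)\mapsto g_1g_2$. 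This changes $K$ only by the automorphism inverting the second factor.)

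Your ``main obstacle'' paragraph, however, asks for something false, and you should drop it. $G_{\sigma_1}\times G_{\sigma_2}$ is a finite group, so its image cannot surject onto $(\mathbb{C}^*)^p$ when $p\geq 1$. Likewise $\mathbb{Z}^p$ cannot inject into the finite group $DG_{\sigma_1}\oplus DG_{\sigma_2}$, which is zero modulo torsion. Any attempt to prove this would fail.

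Nothing of the kind is needed. The slice $Y_0=\mathbb{C}^{d-p}\times(\mathbb{C}^*)^p$ already contains the whole identity component, and $\gamma\cdot Y_0$ is precisely the component indexed by $\bar\gamma\in H$. Hence the action map $\Gamma\times_K Y_0\to Y$ is an isomorphism as soon as $\Gamma/K\to H$ is bijective, which is your Step 2.

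The fact you were likely reaching for is the one behind Lemma \ref{Cokernel}. The classes $e_i^*$, $i\in\sigma_2\setminus\sigma_1$, are independent in $DG_{\sigma_1\cup\sigma_2}$, because $\{b_i\}_{i\in\sigma_1}$ spans $N_{\mathbb{Q}}$ and so no nonzero $u\in N^*$ vanishes on $\sigma_1$. Therefore $(\alpha_i)_{i\in\sigma_2\setminus\sigma_1}\colon G_{\sigma_1\cup\sigma_2}\to(\mathbb{C}^*)^p$ is surjective with kernel $G_{\sigma_1}$. It follows that $G_{\sigma_1\cup\sigma_2}=G^0_{\sigma_1\cup\sigma_2}\cdot G_{\sigma_1}$, so $G_{\sigma_1}$ already meets every component. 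That statement is about $DG_{\sigma_1\cup\sigma_2}$, not about $DG_{\sigma_1}\oplus DG_{\sigma_2}$.
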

\begin{proof}
The above exact sequence gives rise to these isomorphisms of toric DM stacks,
$$[\frac{\mathbb{C}^{d-p}\times \mathbb{C^*}^{p}\times H}{G_{\sigma_1}\times G_{\sigma_2}}]\cong [\frac{[\frac{\mathbb{C}^{d-p}\times \mathbb{C^*}^{p}\times H}{K}]}{H}]\cong [\frac{[\frac{\mathbb{C}^{d-p}\times \mathbb{C^*}^{p}}{K}]\times H}{H}]
\cong [\frac{\mathbb{C}^{d-p}\times \mathbb{C^*}^{p}}{K}].$$ 
The first isomorphism follows from the fact that $\mathbb{C}^{d-p}\times \mathbb{C^*}^{p}\times H$ is a $K$ torsor over ${[\frac{\mathbb{C}^{d-p}\times \mathbb{C^*}^{p}\times H}{K}]}$ and  $G_{\sigma_1}\times G_{\sigma_2}$ is a $K$ torsor over $H.$ Equivariant quotients induces the isomorphism.\par
The action of $K$ through the above exact sequence is trivial on $H$ which accounts for the next isomorphism.\par The action of $H$ on itself being free through multiplication and trivial on the other factor gives us the last isomorphism.

\end{proof}

Also we denote the morphisms $\pi_1$ and $\pi_2$ as the one induced by restricting $\mathbb{C^*}^{p}\times H$ to $\mathbb{C^*}^{p}$ and the group morphisms are induced likewise by restricting the projection map to $K.$
\par
Let $\mathscr{U}_1\times_{\sX}\mathscr{U}_2..\times_{\sX} \mathscr{U}_{k} \cong \mathbb{T}\cong (\mathbb{C^*})^d\times B\mu$ (note that $\sX$ is a $\mu$-gerbe and $B\mu$ is the classifying stack) be the torus obtained by intersecting $k$ top cones and the torus acts on itself by multiplication and the the action of the torus on each top cone and their fiber products extends the multiplication of the torus on itself because the projection maps are $\mathbb{T}-$ equivariant. Note that $\mathbb{C^*}^{p}$ is a diagonalizable algebraic subgroup of $G$ and is affine and denote the co-ordinates by $\lambda=(\lambda_1,..,\lambda_p).$\newline
Next we note that $[Z_1/G_{\sigma_1}]\cong [U_1/G]$ induced by the morphism $\phi_1:\mathbb{C}^d \cong Z_1\rightarrow U_1\cong \mathbb{C}^d\times \mathbb{C^*}^{n-d}\hookrightarrow Z,$ given by $$(x_1,..,x_d)\rightarrow (x_1,..,x_d,1_{\rho_i\notin \sigma_1}).$$ We also notice that $G$ acts on $U_1$ via \textit {Gale duality} and $g\in G$ belongs to $G_{\sigma_{1}}$ if and only if $g.Z_1 \cap Z_1$ is not empty. This implies $$g\in G\implies g\in G_1 \iff \lambda_i(g)_{\rho_i\notin \sigma_1}=1.$$\par Thus one obtains the action of $G_{\sigma_1}\hookrightarrow G$ on $\phi_1(Z_1)\cong Z_1$ induced by $G$ and using the description of the fibered product and equivariance under the action of $G_{\sigma_1}\hookrightarrow G$ and $G_{\sigma_1}\times G_{\sigma_2}$ on $Z_1$ and $\mathbb{C}^{d-p}\times \mathbb{C^*}^{p}\times H$ respectively for $\pi_1$ and similarly for $\pi_2$ we have the action of $G_{\sigma_1}\times G_{\sigma_2}$ specified on $\mathbb{C}^{d-p}\times \mathbb{C^*}^{p}\times H,$ which in turn induces the action of $K$ on $\mathbb{C}^{d-p}\times \mathbb{C^*}^{p}.$ So, let us denote the $\mathbb{T}$ action on $Z_1$ by $(m^1_1,..,m^1_d)\in \mathbb{Z}^d$ and by $(m_1^{2},m_2^{2},..,m_d^{2})\in \mathbb{Z}^d$ on $Z_2.$ Similarly let us denote by $(\chi_1,..,\chi_{d-p},..,\chi_{d})$ the $\mathbb{T}$ weights on $\mathbb{C}^{d-p}\times \mathbb{C^*}^{p}.$ Let us denote the action of $G_{\sigma_{1}}$ on $Z_1$ by $(\eta_1,...,\eta_d)\in X(G_{\sigma_1})\cong DG_{\sigma_1}$ and similarly the action of $G_{\sigma_2}$ on $Z_2$ by $(\eta_1^{'},..,\eta_d^{'})\in X(G_{\sigma_{2}})\cong DG_{\sigma_2}$. Let us denote the action of $K$ on $\mathbb{C}^{d-p}\times \mathbb{C^*}^{p}$ by $(\mu_{1},.,\mu_{d-p},..\mu_{d})\in X(K).$ Note that $DG(\beta_{\sigma_i}),i=1,2$ are finite abelian groups. Also note that the morphisms $\pi_2$ and $\pi_1$ are etale and hence the algebra $\mathbb{C}[\tau_1,..\tau_{d-p},\lambda_{1}^{\pm 1},.,\lambda_{p}^{\pm 1}]$ is a free finitely generated module respectively over $$\mathbb{C}[x_{\sigma_{2}(1)},. ,x_{\sigma_{2}(d-p)},x_{\sigma_{2}(d-p+1)}^{\pm 1}.,x_{\sigma_{2}(d)}^{\pm 1}] \ \& \ \mathbb{C}[x_{\sigma_{1}(1)},.,x_{\sigma_{1}(d-p)},x_{\sigma_{1}(d-p+1)}^{\pm 1},.,x_{\sigma_{1}(d)}^{\pm 1}],$$ with bases given by $B_2$ and $B_1$ respectively. Note that they consist of monomials in $\lambda_1,.,\lambda_p$ with appropriate indices.\par
The following lemma shows equal summand on the either sides of the Gluing formula, the total size being $|K|$ on both sides of the equality.
\begin{lem} 
The cardinality of $B_1$ is given by $|B_1|=|DG_{\sigma_2}/DH|$ and likewise $|B_2|=|DG_{\sigma_1}/DH|.$
\end{lem}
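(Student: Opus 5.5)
Since $\pi_1$ is étale and finite onto its image, I will compute the rank of $\mathbb{C}[\tau_1,\dots,\tau_{d-p},\lambda_1^{\pm1},\dots,\lambda_p^{\pm1}]$ over the subring $R_1:=\mathbb{C}[x_{\sigma_1(1)},\dots,x_{\sigma_1(d-p)},x_{\sigma_1(d-p+1)}^{\pm1},\dots,x_{\sigma_1(d)}^{\pm1}]$ as the degree of the covering. The approach is to read this degree as the index of a sublattice of characters. By the theorem identifying $\mathscr{U}_1\times_{\sX}\mathscr{U}_2$ with $[\mathbb{C}^{d-p}\times\mathbb{C^*}^{p}/K]$, the map $\pi_1$ at the level of atlases is a map of tori (times an affine factor) $\mathbb{C}^{d-p}\times\mathbb{C^*}^p\to\mathbb{C}^{d-p}\times\mathbb{C^*}^p$. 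It is equivariant for $K\to G_{\sigma_1}$, and on the $\tau$-coordinates it is the identity up to characters of $\lambda$.

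The first step is to reduce to the torus part. The coordinates $\tau_j$ pull back $x_{\sigma_1(j)}$ up to the unit $\alpha_{\sigma_1(j)}(\lambda)$, so a basis over $R_1$ is given by the monomials $\lambda^{a}$ that form a set of coset representatives of $X(\mathbb{C^*}^p)$ modulo the sublattice $\Lambda_1$ generated by the pulled-back characters $\alpha_{\sigma_1(d-p+1)},\dots,\alpha_{\sigma_1(d)}$. Hence $|B_1|=[X(\mathbb{C^*}^p):\Lambda_1]$, and likewise $|B_2|=[X(\mathbb{C^*}^p):\Lambda_2]$ with the characters $\alpha_{\sigma_2(i)}^{-1}$.

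The second step identifies these indices with the group-theoretic quotients using Gale duality. Here $X(\mathbb{C^*}^p)$ is the free part $(\mathbb{Z}^p)^*$ of $DG_{\sigma_1\cup\sigma_2}\cong DH\oplus(\mathbb{Z}^p)^*$. The characters $\alpha_{\sigma_1(i)}$ for $i>d-p$ are the images of the basis vectors of the rays of $\sigma_1$ not in $\sigma_2$. I will compare the Gale-duality sequence for $\beta_{\sigma_1\cup\sigma_2}$ with the one for $\beta_{\sigma_2}$, as in the diagram in the proof of Lemma \ref{Cokernel} but with the roles of the cones swapped. Killing the generators indexed by the rays of $\sigma_2$ presents $DG_{\sigma_1\cup\sigma_2}$ modulo those generators as $DG_{\sigma_2}$, and the remaining free generators map onto $(\mathbb{Z}^p)^*$ through $\Lambda_1$. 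The snake lemma applied to this diagram, together with the injectivity $DH\hookrightarrow DG_{\sigma_2}$ from Lemma \ref{Cokernel}, should give a short exact sequence
$$0\to DH\to DG_{\sigma_2}\to (\mathbb{Z}^p)^*/\Lambda_1\to 0.$$
This yields $|B_1|=|DG_{\sigma_2}/DH|$, and the symmetric argument gives $|B_2|=|DG_{\sigma_1}/DH|$.

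As a consistency check, combining this with \eqref{cokernel} gives $|K|=|DG_{\sigma_1}||DG_{\sigma_2}|/|DH|$. This equals $|B_1|\cdot|DG_{\sigma_1}|=|B_2|\cdot|DG_{\sigma_2}|$, which matches the claim that both sides of the gluing have $|K|$ summands.

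The main obstacle is the second step: tracking precisely, through the Smith normal form coordinate changes, that the cokernel of $\Lambda_1\hookrightarrow(\mathbb{Z}^p)^*$ really is $DG_{\sigma_2}/DH$ and not merely a group of the same order. I would handle this by working only with orders. I would compute $|DG_{\sigma_2}|=|\det[B_{\sigma_2}Q]|$-type determinants and $|DH|$ from the diagonal matrix $[\Lambda]$, then check multiplicativity of these determinants under the block decomposition of $[BQ]$ into the $\sigma_2$-columns and the $p$ extra columns.
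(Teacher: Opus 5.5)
Your proposal is correct and follows essentially the paper's route. You put $B_1$ in bijection with $(\mathbb{Z}^p)^*/\Lambda_1$, which is the cokernel of the paper's matrix $C_{p\times p}$ of the characters $\alpha_{\sigma_1(i)}$, $i>d-p$. You then use the Gale-duality diagram comparing $\beta_{\sigma_1\cup\sigma_2}$ with $\beta_{\sigma_2}$, together with the snake lemma, to identify that cokernel with $DG_{\sigma_2}/DH$.

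One small slip: passing from $DG_{\sigma_1\cup\sigma_2}$ to $DG_{\sigma_2}$ kills the characters of the rays of $\sigma_1$ not in $\sigma_2$ (these span $\Lambda_1$), not those of $\sigma_2$. Read that way, $DG_{\sigma_2}/DH$ is $DG_{\sigma_1\cup\sigma_2}$ modulo $DH$ and those $p$ characters, which is canonically $(\mathbb{Z}^p)^*/\Lambda_1$. So the Smith-normal-form concern in your last paragraph is unnecessary.
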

\begin{proof}
From the description of the fiber product of two open charts $\mathscr{U}_i,i=1,2$ and the maps $\pi_i,i=1,2$ we first note the following equality of $\mathbb{T}$-weights given by,
$$[\chi][C]:=[\chi_1\ ..\ \chi_{d-p}\ \ \chi_{d-p+1}\\ .. \chi_{d}][C]=[M_1]:=[m^1_{1}..m^1_{d-p}\ m^1_{d-p+1}.. m^1_{d} ]$$ and for the other chart we have,
$$[\chi_1\ ..\ \chi_{d-p}\ \ \chi_{d-p+1}\\ .. \chi_{d}][A]=[M_2]:=[m^2_{1} ..m^2_{d-p}\ m^2_{d-p+1}.. m^2_{d} ].$$\newline
The matrix given by $$[\chi_1\ ..\ \chi_{d-p}\ \ \chi_{d-p+1}\\ .. \chi_{d}]$$ represents the $d\times d$ non-singular matrix whose columns are the characters attached to the torus action on the co-ordinates $(\tau_1,..,\tau_{d-p},\lambda_1,..,\lambda_p).$\newline The matrices $$[m^1_{1}..m^1_{d-p}\ m^1_{d-p+1}.. m^1_{d} ],[m^2_{1} ..m^2_{d-p}\ m^2_{d-p+1}.. m^2_{d} ]$$ are non-singular $d\times d$ matrices whose columns are characters attached to the torus action on the co-ordinates 
$(x_{\sigma_{1}(1)},..x_{\sigma_{1}(d)}),(x_{\sigma_{2}(1)},..x_{\sigma_{2}(d)})$ corresponding to two open charts $\mathscr{U}_i,i=1,2$ respectively.\par
We describe the matrix $C$ which has the form of the block matrix given by,
$$C=\left(\begin{array}{cc} Id_{d-p \times d-p} & 0\\ C'_{p\times d-p} & C_{p\times p} \end{array}\right).$$
The matrix $C'$ has columns that are given by the characters that define the maps $$\alpha_{\sigma_{1}(i)}:(\mathbb{C}^{*})^{p}\to \mathbb{C}^{*}, \forall i=1,..,d-p.$$ Similarly the columns of the non-singular matrix $C$ are given by the characters associated to the following map,
$$\alpha_{\sigma_{1}(i)}:(\mathbb{C}^{*})^{p}\to \mathbb{C}^{*}, \forall i=d-p+1,..,d.$$\newline
The non-singular $d\times d$ matrix $$A=\left(\begin{array}{cc} Id_{d-p \times d-p} & 0\\ 0 & A_{p\times p} \end{array}\right)$$ has the block form evident from the description of $\pi_2.$ The columns of $A_{p\times p}$ are given by the characters associated with the following maps
$$\alpha_{\sigma_{2}(i)}:(\mathbb{C}^{*})^{p}\to \mathbb{C}^{*}, \forall i=d-p+1,..,d.$$
Note that the span of the matrix $C'$ is generated by the non-singular matrix $C_{p\times p}$ and its cokernel. This tells us that the elements of the basis $B_1$ are bijective to the cokernel of the matrix $C_{p\times p}.$ Similarly, the elements of the basis $B_2$ are bijective with the cokernel of the matrix $A_{p\times p}.$\par 
Now from the previous lemma \ref{Cokernel} we consider the morphism
$$(\mathbb{Z}^{d+p})^{*}\xrightarrow{{p_{r}}^{*}}(\mathbb{Z}^{d+r+p})^{*}\xrightarrow{q} DH\bigoplus (\mathbb{Z}^{p})^{*}\xrightarrow{p} (\mathbb{Z}^{p})^{*}$$ where $p$ is the projection onto the second factor $\mathbb{Z}^{p}$ and $p_{r}:\mathbb{Z}^{d+r+p}\to \mathbb{Z}^{d+p}$ projects onto the third and first factor. We denote the composition as $$\tilde{p}:(\mathbb{Z}^{d+p})^{*}\to (\mathbb{Z}^{p})^{*}.$$
Note that $\tilde{p}$ is given by the matrix $[C'_{p\times d-p}|C_{p\times p}|A_{p\times p}].$\par
We have the following exact sequence,
$$0\to(\mathbb{Z}^{d+r})^{*}\to (\mathbb{Z}^{d+r+p})^{*}\xrightarrow{p\circ q} (\mathbb{Z}^{p})^{*}\to 0$$ and under the commutativity of the projection maps we have the following diagram of exact sequences given by,
\begin{center}
~~~~~~\xymatrix{
  0 \ar[r] & (\mathbb{Z}^{d+r})^{*} \ar[d]_-{\id} \ar[r]^-{} & (\mathbb{Z}^{d+r+p})^{*} \ar[d]_-{i^*} \ar[r]^-{p\circ q} & (\mathbb{Z}^{p})^{*} \ar[d]^-{i^*} \ar[r] & 0 \\
  0 \ar[r] & (\mathbb{Z}^{d+r})^{*} \ar[r]_-{} & (\mathbb{Z}^{d+r})^{*} \ar[r]_-{p\circ q} & DG_{\sigma_1}/DH \ar[r] & 0
}
\end{center}
where $p:DG_{\sigma_{1}}\to DG_{\sigma_1}/DH$ denotes the projection map.\par 
Applying the Snake lemma to the above diagram, one achieves the following exact sequence,
$$(\mathbb{Z}^{p})^{*}\xrightarrow{[A_{p\times p}]} ker(i^{*})\to 0.$$ This implies $$|B_2|=|DG_{\sigma_1}/DH|$$ as the following holds,
$$0\to (\mathbb{Z}^{p})^{*}\xrightarrow{[A_{p\times p}]}(\mathbb{Z}^{p})^{*}\xrightarrow{i^{*}} DG_{\sigma_1}/DH\to 0.$$
For the other top cone, we have a similar diagram using the Snake Lemma and the exact sequence we arrive at is given by
$$(\mathbb{Z}^{p})^{*}\xrightarrow{[C_{p\times p}]} ker (i^{*})\to 0.$$
This again implies $$0\to (\mathbb{Z}^{p})^{*}\xrightarrow{[C_{p\times p}]}(\mathbb{Z}^{p})^{*}\xrightarrow{i^{*}} DG_{\sigma_2}/DH\to 0$$ from which we deduce $$|B_1|=|DG_{\sigma_2}/DH|.$$
\end{proof}
For the next proposition, we denote the torus weights for the co-ordinates of the substack $\mathscr{U}_{ij}:=\mathscr{U}_i\times_{\sX}\mathscr{U}_j:=[\mathbb{C}^{d-p}\times (\mathbb{C}^{*})^p/K_{i,j}]$ by $\chi_1,..,\chi_d.$ For the definition of $Q\in Box(\chi_1,..,\chi_d)$ see (Definition \ref{Box}). Denote by $$ S_{\sigma_{i,ij,Q}}:=\{Q'\in B_{\sigma_{i}}(\mathbb{T})\bigg|Q'=(Q-\sum_{k=1}^{p}a_k\chi_{d-p+k}),\prod_{k=1}^{p}\lambda_k^{a_k} \ \in B_{i}\}.$$ Here we denote by $(R)\ , R\in \mathbb{Z}^d$ the fractional part of $R$ (in each coordinate it is a rational number $0\leq r<1$) in the box decomposition of $R$ with respect to the matrix $[m^i_1,..,m^i_d]$ whose columns are given by the characters of the torus action on $\mathscr{U}_{\sigma_i}.$ We denote by $B_i$ the basis (of the free module) of the co-ordinate ring of $\mathscr{U}_{ij}$ over the co-ordinate ring of $\mathscr{U}_i.$ Tensoring a representation by a character represents taking the tensor product of a group representation by a 1 dimensional representation of the group. We can consider a $G_{\sigma_i}$ representation as a $G_{\sigma_i}\times G_{\sigma_j}$ representation via the canonical projection maps. Then we consider a $G_{\sigma_i}\times G_{\sigma_j}$ representation as a $K_{i,j}$ representation through the injection $K_{i,j}\to G_{\sigma_i}\times G_{\sigma_j}$ similar to (\ref{eq:reduction}).
\begin{thm}\label{Gluing Formula}
Let $\sX$ be a smooth toric DM stack given by $(N,\Sigma,\beta).$ 
The category of torsion free toric sheaves is equivalent to the category of $l$-tuples $\{\hat{F}_i\}_{i=1,.,l}$ of finite stacky $S$- families on $\mathscr{U}_{i=1,.,l}$ satisfying the following equality of $K_{i,j}$ representations given at each point $Q+\sum_{i=1}^{d-p}l_i\chi_i$ where $Q\in Box(\chi_1,..,\chi_d),\ i\neq j, \ (l_1,..,l_{d-p})\in \mathbb{Z}^{d-p}$ by,

$$\bigoplus_{\begin{array}{cc}
     &  l\in DG_{\sigma_i}\\
     & Q'\in S_{\sigma_{i,ij,Q}}
\end{array}} ({}_{Q'}F_{i}(l_1,.,l_{d-p},\infty,.,\infty)_{l} \otimes\sum_{i=d-p+1}^{d}(l_{i}\eta_{i},0))\ \otimes\sum_{i=d-p+1}^{d}a_{i-d+p}\mu_{i}$$
$$=\bigoplus_{{\begin{array}{cc}
     &  l\in DG_{\sigma_j}\\
     & Q'\in S_{\sigma_{j,ij,Q}}
\end{array}}}({}_{Q'}F_{j}(l_1,.,l_{d-p},\infty,.,\infty)_{l}\otimes\sum_{i=d-p+1}^{d}\ (0,l'_i\eta'_i))\ \otimes \sum_{i=d-p+1}^{d}a'_{i-d+p}\mu_{i}\ $$
Similar gluing conditions hold for the morphisms.
\end{thm}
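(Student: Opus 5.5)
The plan is to use descent for quasi-coherent sheaves along the étale cover $\coprod_{i}\mathscr{U}_i\to\sX$ given by the top cones, in its torus-equivariant form. Since the open substacks $\mathscr{U}_{\sigma_i}\cong[\mathbb{C}^d/G_{\sigma_i}]$ cover $\sX$ and every inclusion is $\mathbb{T}$-equivariant, a toric torsion free sheaf $\mathscr{F}$ on $\sX$ is the same as the following data:
\begin{itemize}
\item a family of toric torsion free sheaves $\mathscr{F}_i:=\mathscr{F}|_{\mathscr{U}_i}$;
\item $\mathbb{T}$-equivariant isomorphisms $\pi_1^*\mathscr{F}_i\cong\pi_2^*\mathscr{F}_j$ on $\mathscr{U}_{ij}=\mathscr{U}_i\times_{\sX}\mathscr{U}_j$, subject to the cocycle condition on triple overlaps.
\end{itemize}
By the stacky $S$-family dictionary recalled from \cite{GJK}, each $\mathscr{F}_i$ is a finite stacky $S$-family $\hat F_i$. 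These are bigraded by $X(\mathbb{T})$ and $X(G_{\sigma_i})\cong DG_{\sigma_i}$, and they decompose into box summands ${}_{b}\hat F_i$, $b\in B_{\sigma_i}(\mathbb{T})$. So the whole task is to rewrite the overlap isomorphisms as the displayed equality of $K_{i,j}$-representations.

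The first step is to use the isomorphism $\mathscr{U}_{ij}\cong[\mathbb{C}^{d-p}\times(\mathbb{C}^*)^p/K_{i,j}]$ proved above. A toric sheaf on the overlap is then a $\mathbb{C}[\tau_1,\dots,\tau_{d-p},\lambda^{\pm1}]$-module graded by $X(\mathbb{T})$ and finely graded by $X(K_{i,j})$. Because the last $p$ coordinates are invertible, the weight space at a point $Q+\sum_{k\le d-p}l_k\chi_k$, with $Q\in Box(\chi_1,\dots,\chi_d)$, depends only on $(l_1,\dots,l_{d-p})$ and on $Q$.

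Next I compute $\pi_1^*\mathscr{F}_i$ explicitly.
\begin{itemize}
\item Restricting to $x_{\sigma_i(d-p+1)},\dots,x_{\sigma_i(d)}\neq0$ localizes the $S$-family. Each graded piece becomes the direct limit along those coordinates, which is the space $F_i(l_1,\dots,l_{d-p},\infty,\dots,\infty)$.
\item Since $\pi_1$ is étale, the coordinate ring of $\mathscr{U}_{ij}$ is free over the localized ring with the monomial basis $B_i$. The preceding lemma gives $|B_i|=|DG_{\sigma_j}/DH|$.
\item Thus $\pi_1^*\mathscr{F}_i=\bigoplus_{\prod\lambda_k^{a_k}\in B_i}\lambda^{a}\cdot(\text{localized }\hat F_i)$. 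The basis element $\lambda^a$ shifts the torus weight by $\sum a_k\chi_{d-p+k}$ and the $K_{i,j}$-weight by $\sum a_k\mu_{d-p+k}$.
\end{itemize}
Collecting the summands that land over $Q$ is exactly the condition $Q'=(Q-\sum a_k\chi_{d-p+k})\in S_{\sigma_{i,ij,Q}}$, with fractional parts taken relative to $[m^i_1,\dots,m^i_d]$. The $G_{\sigma_i}$-grading $l\in DG_{\sigma_i}$ is first twisted by $\sum l_k\eta_k$, which records the exponents of the inverted variables. It is then pulled back to $K_{i,j}$ along $K_{i,j}\hookrightarrow G_{\sigma_i}\times G_{\sigma_j}$ of (\ref{eq:reduction}), and this gives the left hand side. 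The same computation for $\pi_2$, with the inverse characters coming from $\alpha_{\sigma_2(k)}^{-1}$, gives the right hand side.

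An equivariant isomorphism of free modules over the torus-invariant Laurent ring is determined weight by weight. So the gluing isomorphism is equivalent to isomorphisms of the weight spaces compatible with multiplication by the $\tau_k$ and the $\lambda_k$. As $K_{i,j}$-representations these weight spaces are isomorphic exactly when the displayed equality holds, and finite-dimensional representations of a diagonalizable group are determined by their characters.

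For morphisms, the same weight-space bookkeeping shows that a morphism of toric sheaves is a tuple of morphisms of stacky $S$-families whose pullbacks agree on the overlaps. This gives full faithfulness. Essential surjectivity comes from effective descent, once we check the cocycle condition on triple overlaps. These are again tori times affine spaces, where everything is determined at the generic point, and torsion freeness makes the maps into limits injective.

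The main obstacle will be the bookkeeping of the fine gradings. Concretely, I must verify that the $G_{\sigma_i}$-grading shifted by the localization exponents and the $\lambda^a$-basis contributes precisely the character $\sum l_k\eta_k\otimes\sum a_k\mu_{d-p+k}$ of $K_{i,j}$, using the exact sequence (\ref{cokernel}). I would first check this in the example from \cite{FMN}, where $\mathscr{U}_{12}\cong[\mathbb{C}^*/\mu_m]$, and then argue in general via the Smith normal form used in Lemma \ref{Cokernel}.
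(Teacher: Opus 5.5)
Your overall route is the paper's. You localize each chart along the $p$ non-shared coordinates to get ${}_{Q'}F_i(l_1,\dots,l_{d-p},\infty,\dots,\infty)_l$, pull back along the étale $\pi_i$ using the monomial basis $B_i$, and record the torus weight and the $X(K_{i,j})$-weight $\overline{l}+\sum_k a_k\mu_{d-p+k}$ of each summand. The descent framing and the triple-overlap remark are extras the paper does not spell out.

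\textbf{The gap.} It lies in how you turn the gluing isomorphism into the displayed identity. You argue that the weight spaces are isomorphic as $K_{i,j}$-representations exactly when the displayed equality holds, because representations of a diagonalizable group are determined by their characters. Read that way, the condition only sees dimensions of graded pieces. It does not give compatibility with multiplication by $\tau_1,\dots,\tau_{d-p}$, i.e.\ how the multifiltration in the shared directions sits inside the limit space. Moreover, an existence statement cannot supply the morphism data an equivalence of categories needs. For instance, on $\mathbb{P}^1\times\mathbb{P}^1$ (trivial stabilizers), take rank $2$ toric bundles given by four pairwise distinct lines $p_1,\dots,p_4\subset\mathbb{C}^2$ with fixed jumps. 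They have isomorphic chart-wise $S$-families and identical overlap characters, yet the cross-ratio of the four lines is a genuine modulus of the sheaf. So tuples satisfying a character identity are not equivalent to toric torsion free sheaves.

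\textbf{The fix.} The missing idea is the one you only invoke for the cocycle condition. Since $\mathscr{F}$ is torsion free, every weight space on every chart injects into the (finely graded) limiting space $F(\infty,\dots,\infty)$ attached to the dense open torus, which lies in all $\mathscr{U}_i$ and all $\mathscr{U}_{ij}$. Fix that common space once and for all and take every structure map to be an inclusion. The displayed formula must then be read, as the paper does (``the isomorphism is taken as identity''), as a literal equality of graded subspaces of this common space, with the $K_{i,j}$-gradings twisted as you describe. With that reading, three things follow:
\begin{enumerate}
\item compatibility with the $\tau_k$ and $\lambda_k$ actions is automatic;
\item the cocycle condition is automatic;
\item morphisms are just compatible families of maps on the common limit spaces.
\end{enumerate}
This is also the reading the paper relies on later. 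In Theorem \ref{Reflexive hull} the lines $p,p'\in\mathbb{P}^1$ must literally coincide across charts, and the construction of $\mathscr{N}_{\vec\chi}$ inside products of Grassmannians of a fixed $\mathbb{C}^r$ needs the same.
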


\begin{proof}
We prove the theorem for i=1 and j=2 and can be easily generalized to other top cones. In the rest of the proof $K_{i,j}$ is basically denoted as $K_{1,2}=K_{2,1}=K.$
We abbreviate $S_{\sigma_{j,ij,Q}}$ with $S_{\sigma_1,Q}$ and similarly for $\sigma_2.$ We use the same set of torus weights $(\chi_1,..,\chi_d)$ for $\mathscr{U}_{12}.$\par 
Consider $\mathscr{F}$ restricted to $Z_1$ denoted by $\mathscr{F}_1.$ Let us denote by ${}_{(q_1,..,q_d)}F_1(l_1,,l_d)$ the weight sum decomposition of the sheaf indexed by $(q_1,,q_d)\in B_{\sigma_{1}}(\mathbb{T})$ given by the basis of $\mathbb{T}-$ weights on $Z_1.$ On restricting $\mathscr{F}_1$ to $\mathbb{C}^{d-p}\times \mathbb{C^*}^p$ we obtain that for fixed $(l_1,.,l_{d-p})$ the vector spaces ${}_{(q_1,..q_d)}F_1(l_1,,.,l_d)$ stabilize for $l_{d-p+1}>>0,..,l_{d}>>0.$
Upto isomorphism of the stacky finite S-family $\hat{F_1}$ we can take the isomorphisms between the limiting spaces as identity and then denote the limit by: $${}_{(q_1,..,q_d)}F_1(l_1,..l_{d-p},\infty,..,\infty).$$ Consider the $S-$ family corresponding to the sheaf $\mathscr{F}_1$ restricted to $\mathbb{C}^{d-p}\times \mathbb{C^*}^p$ say $\hat{G_1}$ and from the description of graded tensor product one obtains:
$${}_{(q_1,.,q_d)}G_{1}(l_1,.,l_d)={}_{(q_1,.q_d)}F_{1}(l_1,.,l_{d-p},\infty,.,\infty).$$ For sufficiently large $l_{d-p+1},.,l_d$ we have $${}_{(q_1,.,q_d)}F_1(l_1,.,l_d)={}_{(q_1,.,q_d)}F_1(l_1,.,l_{d-p},\infty,.,\infty)$$ and hence the fine grading is obtained as $l\in DG_{\sigma_1}$:
$${}_{(q_1,.,q_d)}G_{1}(l_1,.,l_d)_l={}_{(q_1,.,q_d)}F_{1}(l_1,.,l_d)_l.$$ The fine grading for a fixed $(l_{d-p+1},.,l_{d})$ determines the fine grading for any collection of $(l_{d-p+1},.,l_{d})$ given by:
$${}_{(q_1,.,q_d)}G_1(l_1,.,l_d)_{l}={}_{(q_1,.,q_d)}G_1(l_1,.,l_{d-p},0,.,0)_{l-\sum_{i=d-p+1}^{d}\eta_il_i}\otimes \eta_{d-p+1}^{l_{d-p+1}}..\otimes \eta_{d}^{l_{d}}.$$
The above notation means that the character $$\eta_{d-p+1}^{l_{d-p+1}}\otimes.. \eta_{d}^{l_{d}}$$ is a function from $G_{\sigma_{1}}$ to $\mathbb{C}^{*}$ and the representation refers to the action of an element of $G_{\sigma_{1}}$ on the graded vector space by this character.
Let us define the fine grading as $${}_{(q_1,.,q_d)}F_1(l_1,.,l_{d-p},\infty,.,\infty)_{l}:={}_{(q_1,.,q_d)}G_{1}(l_1,.,l_{d-p},0.,0)_{l}.$$\par
Consider $Q=(q_1,.,q_d)\in B_{\sigma_1}(\mathbb{T}),$ 
and let us describe $\hat{F}_{1,12}$ at the point $\sum_{i=1}^{d}(q_i+l_i)m^1_i.$\newline
Using the description of the graded tensor products, an element of $\hat{F}_{1,12}$ at the said point can be uniquely described as $$\bigoplus_{Q'\in S_{\sigma_1,Q}}(v_{Q'}\otimes \prod_{i=1}^{p}\lambda_i^{a_i})\ $$ where $$v_{Q'}\in {}_{Q'}G_{1}(l_1,.,l_d)$$ and %
the homogeneous elements of the summands are elements of the module $$G_1\otimes_{\mathbb{C}[x_{\sigma_{1}(1)},.,x_{\sigma_{1}(d)}]} \mathbb{C}\bigg[\tau_1,.,\tau_{d-p},\lambda_{1}^{\pm 1},.,\lambda_{p}^{\pm 1}\bigg] .$$\par
The above summation can be explained by noting that the indexing set of the summation is given by
$$S_{\sigma_1,Q}:=\big \{Q'\in B_{\sigma_{1}}(\mathbb{T})\bigg|Q'=(Q-\sum_{i=1}^{p}a_{i}\chi_{d-p+i})  ,\ \prod_{i=1}^{p}\lambda_{i}^{a_i}\in B_1  \big \}.$$
Given an element of $B_1$, given by $(a_1,..,a_p)$ the corresponding $Q'$ is obtained by considering the box decomposition of the integer $Q-\sum_{i=1}^{p}a_{i}\chi_{d-p+i}$ with respect to the non-singular matrix given by $(m^1_1,..m^1_d).$\par For any two such choice of basis elements $\{a_i\},\{a'_i\},i=1,..,p$ corresponding to the same $Q'$ one can show that $$\begin{pmatrix}
  a_{1}-a'_{1} \\
  \vdots \\
  a_{p}-a'_p \\
\end{pmatrix}=[C_{p\times p}]\begin{pmatrix}
 l_1\\
 \vdots\\
 l_p\\
\end{pmatrix}$$ 
where $l_i \in \mathbb{Z}.$ Hence $\{a_i\},\{a'_i\}$ correspond to the same elements in the basis and hence the indexing set of the summand is in bijection with $|S_{\sigma_1,Q}|=|B_1|=|DG_{\sigma_2}/DH|.$

Let us compute the fine grading of the pullback module as a $K$ representation. Assuming $v_{Q'}$ is homogeneous of weight $l\in DG_{\sigma_1},$ one obtains the $X(K)$ weight of the element $v_{Q'}$ given by $\overline{l},$ the image of $l\in X(G_{\sigma_1})$ under the natural surjective map from $X(G_{\sigma_1})$ to $X(K)$ via the embedding into $X(G_{\sigma_1}\times G_{\sigma_2})$ (in [\ref{cokernel}]). Thus we obtain the $X(K)$ grading of $(v_{Q'}\otimes \prod_{i=1}^{p}\lambda_{i}^{a_i})$ given by $$(\overline{l}+\sum_{i=1}^{p}a_{i}\mu_{d-p+i}).$$ Hence, one obtains the value of $\hat{F}_{1,12}$ at the point $$\sum_{i=1}^{d}(q_i+l_i)m^1_i\ \in X(\mathbb{T})$$ given as a $X(K)$ representation by:
$$\bigoplus_{l\in DG_{\sigma_1}}\bigoplus_{\quad Q'\in S_{\sigma_1,Q}}{}_{Q'}G_{1}(l_1,.,l_d)_{l}\otimes {\sum_{i=1}^{p}a_i\mu_{d-p+i}}.$$ The above can be rewritten as:
$$\bigoplus_{\begin{array}{cc}
     &  l\in DG_{\sigma_1}\\
     & Q'\in S_{\sigma_1,Q}
\end{array}} \big ({}_{Q'}F_{1}(l_1,.,l_{d-p},\infty,.,\infty)_{l} \otimes_{i=1}^{p}\eta_{d-p+i}^{l_{d-p+i}}\ \bigl)\otimes \sum_{i=1}^{p}a_i\mu_{d-p+i}.$$\par
Consider $\mathscr{F}$ and restrict it to $Z_2$ and let us compute the value of $\hat{F}_{2,12}$ at the point $\sum_{i=1}^{d}(p_i+l'_i)m^2_{i}.$\newline
Consider $P=(p_1,.,p_d)\in B_{\sigma_2}(\mathbb{T})$ and consider the set $$\bigl\{Q'\in B_{\sigma_2}(\mathbb{T})|Q'=(P-\sum_{i=1}^{p}a'_i\chi_{d-p+i}),\prod_{i=1}^{p}\lambda_{i}^{a_{i}'}\in B_{2}\bigl\}.$$ Thus, applying similar techniques as before, the value at the said point is given by:
$$\bigoplus_{{\begin{array}{cc}
     &  l\in DG_{\sigma_2}\\
     & Q'\in S_{\sigma_2,Q}
\end{array}}}\big ({}_{Q'}F_{2}(l_1^{'},.,l_{d-p}^{'},\infty,.,\infty)_{l}\otimes _{i=1}^{p}(\eta'_{d-p+i})^{l_{d-p+i}'}\big )\otimes \sum_{i=1}^{p}a_i^{'}\mu_{d-p+i}.$$
\par 
We denote the equivalence of categories up to isomorphism between the finely graded $S-$ families and the isomorphism is taken as identity and the Gluing Formula is given at any point as the equality of $K$ representations determined as above. \par
From the description of $\hat{F}_{2,12}$ we see that it is enough to compute the Gluing formula at the point $$Q:=[\chi_1,..,\chi_{d-p}\  \chi_{d-p+1},..,\chi_{d}]\begin{pmatrix}
  q_1+l_1 \\
  \vdots \\
  q_{d-p}+l_{d-p} \\
  q_{d-p+1} \\
  \vdots \\
  q_{d} \\
\end{pmatrix}=\sum_{i=1}^{d-p}(q_i+l_i)\chi_i+\sum_{i=d-p+1}^{d}q_i\chi_i.$$ Here $(q_1,..,q_d)\in \text{Box}(\chi_1,..,\chi_d),\ l_i\in \mathbb{Z}.$\newline
Thus for $\hat{F}_{1,12}$ enough to evaluate at the point given by,
$$[C]^{-1}\begin{pmatrix}
  q_1+l_1 \\
  \vdots \\
  q_{d-p}+l_{d-p} \\
  q_{d-p+1} \\
  \vdots \\
  q_{d} \\
\end{pmatrix}=\left(\begin{array}{cc} Id_{d-p \times d-p} & 0\\ -C^{-1}_{p\times p}C'_{p\times d-p} & C^{-1}_{p\times p} \end{array}\right)\begin{pmatrix}
  q_1+l_1 \\
  \vdots \\
  q_{d-p}+l_{d-p} \\
  q_{d-p+1} \\
  \vdots \\
  q_{d} \\
\end{pmatrix}$$ with respect to the basis $(m^1_1,..,m^1_d).$ \newline 
Under the box decomposition, the co-ordinates, say are given by $$(q_1,..,q_{d-p},t_{1},..,t_p)\in B_{\sigma_1}(\mathbb{T})$$ and $$(l_1,..,l_{d-p},l_{d-p+1},..,l_d)\in \mathbb{Z}^d.$$
Similarly, the same computation for the other top cone lets us compute $\hat{F}_{2,12}$, given by the co-ordinates
$$(q_1,..,q_{d-p},t'_1,..,t'_p)\in B_{\sigma_2}(\mathbb{T}),$$ and $$(l_1,..,l_{d-p},l'_{d-p+1},..,l'_{d})\in \mathbb{Z}^d$$ with respect to the basis $(m^2_1,..,m^2_d).$\par 
Then we have the following 

\begin{equation}\label{eqn_Gluing}
\begin{aligned}
&\bigoplus_{\begin{array}{cc}
&  l\in DG_{\sigma_1}\\
& Q'\in S_{\sigma_1,Q}
\end{array}} ({}_{Q'}F_{1}(l_1,.,l_{d-p},\infty,.,\infty)_{l} \otimes\ \sum_{i=d-p+1}^{d}\ (l_i\eta_i,0)) \otimes \sum_{i=d-p+1}^{d}a_{i-d+p}\mu_{i} \\
=&\bigoplus_{{\begin{array}{cc}
     &  l\in DG_{\sigma_2}\\
     & Q'\in S_{\sigma_2,Q}
\end{array}}}({}_{Q'}F_{2}(l_1,.,l_{d-p},\infty,.,\infty)_{l}\otimes\  \sum_{i=d-p+1}^{d} \ (0,l'_i\eta'_i))\otimes\ \sum_{i=d-p+1}^{d}a'_{i-d+p}\mu_{i}.\\ 
\end{aligned}
\end{equation}
\end{proof}

\begin{rmk} No gluing formula exists for torsion sheaves.    
\end{rmk}
Next we compute the $\mathbb{T}$ equivariant line bundles on $\sX.$
\begin{cor}\label{LB}
$Pic_{\mathbb{T}}(\sX)\ \cong\ \mathbb{Z}^{n}.$
\end{cor}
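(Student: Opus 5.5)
The plan is to specialize Theorem \ref{Gluing Formula} to rank one reflexive (hence locally free) toric sheaves and then to read off the data chart by chart. A $\mathbb{T}$-equivariant line bundle $\mathscr{L}$ on $\sX$ is a torsion free toric sheaf, so by Theorem \ref{Gluing Formula} it is the same as an $l$-tuple of finite stacky $S$-families $\{\hat{F}_i\}$ on the charts $\mathscr{U}_{\sigma_i}\cong[\mathbb{C}^d/G_{\sigma_i}]$ satisfying the gluing equalities of $K_{i,j}$ representations. On a single chart, an equivariant line bundle on $[\mathbb{C}^d/G_{\sigma_i}]$ is a $G_{\sigma_i}\times\mathbb{T}$-equivariant line bundle on $\mathbb{C}^d$. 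Since $\mathbb{C}^d$ is affine with trivial Picard group, such a bundle is $\sO_{\mathbb{C}^d}\otimes\chi$ for a character $\chi$ of $\mathbb{T}\times G_{\sigma_i}$. In $S$-family language this means the family is one dimensional exactly on a translated cone $\{u+\sum l_k m^i_k : l_k\ge 0\}$, supported in a single box summand, with fine grading determined by its value at the vertex. So the local data on $\mathscr{U}_{\sigma_i}$ is a character in $X(\mathbb{T})\times X(G_{\sigma_i})$, modulo nothing.

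Next I would avoid gluing cone-by-cone and instead work with the global quotient presentation $\sX=[Z/G]$, using the dictionary of \cite{FMN} that the torus action on $\sX$ comes from $(\mathbb{C}^*)^n$ acting on $Z$. A $\mathbb{T}$-equivariant line bundle on $[Z/G]$ is then a $(\mathbb{C}^*)^n$-linearized line bundle on $Z$, where $(\mathbb{C}^*)^n$ acts through $G\to(\mathbb{C}^*)^n$ and the quotient torus combined. Since $Z\subset\mathbb{C}^n$ is the complement of a closed subset of codimension $\ge 2$ (as $\Sigma$ is simplicial and $J_\Sigma$ defines coordinate subspaces of codimension $\ge2$), we get $Pic(Z)=0$ and $\sO(Z)^*=\mathbb{C}^*$. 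Hence every $(\mathbb{C}^*)^n$-equivariant line bundle on $Z$ is trivial with a linearization given by a character, which gives $Pic_{(\mathbb{C}^*)^n}(Z)\cong X((\mathbb{C}^*)^n)\cong\mathbb{Z}^n$. Equivalently, the characters are the divisors $\sum a_\rho D_\rho$ on the torus-invariant divisors, which is the stacky analogue of the toric variety statement.

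Then I would check this against Theorem \ref{Gluing Formula}. Given $(a_1,\dots,a_n)$, each chart receives the character obtained by restricting to the coordinates of $\sigma_i$, and the character of $G_{\sigma_i}$ is the image of $(a_\rho)_{\rho\notin\sigma_i}$ under $\mathbb{Z}^n\to DG(\beta)\to DG_{\sigma_i}$. The gluing equality (\ref{eqn_Gluing}) then holds because both sides are restrictions of one global character through the sequence (\ref{cokernel}). Conversely, the limiting spaces ${}_{Q'}F_i(l_1,\dots,l_{d-p},\infty,\dots,\infty)$ with their $K_{i,j}$-weights recover the $a_\rho$ for the rays in $\sigma_i\cap\sigma_j$. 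Running over all pairs of charts determines all $a_\rho$ consistently, and the characters are well defined without extra ambiguity, which gives the bijection with $\mathbb{Z}^n$.

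The step I expect to be the main obstacle is the identification of $\mathbb{T}$-equivariance on the stack with $(\mathbb{C}^*)^n$-linearization upstairs. The reason is that when $N$ has torsion, $\mathbb{T}$ contains a $B\mu$ factor, so I must check that the equivariant structure is exactly a lift to $(\mathbb{C}^*)^n$ and not merely a projective one. The sequence $1\to\mathscr{K}\to G\to(\mathbb{C}^*)^n\to\mathbb{T}\to1$ should handle this: the $\mathscr{K}$-part of the data is already absorbed into the $G$-linearization. I would check this carefully and fall back on \cite{FMN} for the description of $\mathbb{T}$ as a Picard stack.
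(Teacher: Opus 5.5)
Your argument is correct. Its substance, in the second paragraph, takes a genuinely different route from the paper.

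The paper works chart by chart. On each $\mathscr{U}_{\sigma_i}$ it records a box element, the position $(A^i_1,\dots,A^i_d)$ of the homogeneous generator, and a $DG_{\sigma_i}$ fine grading. It uses Theorem~\ref{Gluing Formula} to show $q^i_k+A^i_k=q^j_k+A^j_k$ along shared rays, so one integer per ray survives. It then solves an equation in $X(K)$ to show that the fine gradings are determined by $B\in\mathbb{Z}^n$, up to the ambiguity coming from $DH$.

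You instead pass to $[\sX/\mathbb{T}]\simeq[Z/(\mathbb{C}^*)^n]$ by quotienting in stages along $G\to(\mathbb{C}^*)^n\to\mathbb{T}$. You use $\operatorname{codim}V(J_\Sigma)\geq 2$ to get $\mathrm{Pic}(Z)=0$ and $\sO(Z)^*=\mathbb{C}^*$, and read off $\mathrm{Pic}^{(\mathbb{C}^*)^n}(Z)=X((\mathbb{C}^*)^n)$. What your route buys:
\begin{itemize}
\item It gives existence, injectivity and compatibility with $\otimes$ in one stroke, with no gluing or fine-grading bookkeeping.
\item It identifies the standard basis with the divisor bundles $\sO(\mathscr{D}_\rho)$.
\item Combined with $\mathrm{Pic}(\sX)=\mathrm{Pic}^G(Z)=X(G)=DG(\beta)$, it produces the sequence (\ref{ex}) directly from the Gale dual sequence. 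It also shows the forgetful map has cokernel $Ext^1(N,\mathbb{Z})$, so it is onto exactly when $N$ is torsion free.
\end{itemize}
What the paper's route buys is the explicit stacky $S$-family of $L_B$ on every chart. That is the form in which equivariant line bundles are used later, in Theorem~\ref{Reflexive hull} and Theorem~\ref{slope stability}.

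A few corrections.
\begin{itemize}
\item Your third paragraph is not needed, since the bijection is already established by the global argument. It also contains a slip. By definition $\alpha_\rho|_{G_{\sigma_i}}=1$ for $\rho\notin\sigma_i$, so $(a_\rho)_{\rho\notin\sigma_i}$ always maps to $0$ in $DG_{\sigma_i}$. The $G_{\sigma_i}$-weight of the generator is the restriction of the full character $a$, namely $\sum_{\rho\in\sigma_i}a_\rho\,\alpha_\rho|_{G_{\sigma_i}}$, which depends only on $(a_\rho)_{\rho\in\sigma_i}$.
\item On your flagged obstacle: in the gerbe case the $\mathscr{K}$-part is not so much absorbed as forced to vanish. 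Equivariance under the $B\mathscr{K}$ factor of $\mathbb{T}=[(\mathbb{C}^*)^n/G]$ forces $\mathscr{K}$ to act trivially on fibres. That is exactly the condition for a character of $G$ to extend to $(\mathbb{C}^*)^n$ along $\alpha$. Reading $\mathbb{T}$ as this Picard stack, the identification $[\sX/\mathbb{T}]\simeq[Z/(\mathbb{C}^*)^n]$ holds in general.
\item $\operatorname{codim}V(J_\Sigma)\geq 2$ holds because every $b_i$ spans a ray of $\Sigma$, so each $\{x_i=0\}$ meets $Z$. Simpliciality plays no role in that step.
\end{itemize}
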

\begin{proof}
An equivariant line bundle on $\mathscr{F}$ on $\sX$ is given by the choice of a box element and the position of the homogeneous generator on each open chart $\sigma$ for each top cone with a $X(\mathbb{T})$ grading and a $DG_{\sigma}$ fine-grading. This data has to satisfy the gluing condition.\par
In the chart corresponding to $\sigma_1,\sigma_2$ let us fix the position of the homogeneous generator $(A^1_1,..,A^1_d),(A^2_1,..,A^2_d)$ and $Q_1:=(q^1_1,..,q^1_d),\ Q_2:=(q^2_1,..,q^2_d)$ be the box element and denote $B_1:=Q_1+\sum_{k=1}^{d}A^1_km^1_k,\ B_2:=Q_2+\sum_{k=1}^{d}A^2_km^2_k$ respectively. Note that in terms of the basis $(m^2_1,..,m^2_d)$ we have the following description of $B_1=Q'_1+\sum_{k=1}^{d}A'_km^2_k,$ where 
$$Q'_1=(q^1_1,..,q^1_{d-p},t_1,..,t_p)$$ and $$A'_k=A^1_k\ \forall k=1,..,d-p.$$
By evaluating $\hat{F}_{1,12}(B_1)$ one observes using the Gluing formula that $Q_2\in S_{\sigma_{2,B_1}}$ and $${}_{Q_2}F_{2}(A^1_1,..,A^1_{d-p},A'_{d-p+1},..,A'_{d})_{l-\sum_{i=d-p+1}^{d}A'_{i}\eta'_{i}}$$ is non-zero for some $l\in DG_{\sigma_2}$. Applying the same idea for $B_2$ we conclude that $A^1_k=A^2_k\  \forall k=1,..,d-p.$\newline
Next observe that for $Q_2\in S_{\sigma_2,B_1}$, for some $(a_1,..,a_p)$ corresponding to a basis element of $B_2$ we have the following equality,
$$Q_2=(Q'_1-\sum_{i=1}^{p}a_i\chi_{d-p+i})$$ where $(r)$ represents the fractional part of $r\in \mathbb{Q}$ on each co-ordinate with respect to the matrix $[M_i]$.\newline
This implies
$$(Q_2-Q'_1)=[\chi][A]\begin{pmatrix}
    q^2_1-q^1_1\\
    \vdots\\
    q^2_{d-p}-q^1_{d-p}\\
    q^2_{d-p+1}-t_1\\
    \vdots\\
    q^2_{d}-t_p
\end{pmatrix}=[\chi][A]\bigg(\begin{pmatrix}
    x_1\\
    x_2\\
    \vdots\\
    x_d\\
\end{pmatrix}+[A]^{-1}\begin{pmatrix}
    0\\
    \vdots\\
    a_1\\
    \vdots\\
    a_p
\end{pmatrix}\bigg),$$ for some $(x_1,..,x_d)\in \mathbb{Z}^d.$
Therefore, injectivity forces,
$$q^1_k=q^2_k\ \forall k=1,..,d-p.$$

Hence, for the top cones $\sigma_i,\sigma_j$ the gluing formula shows that $$q^i_k+A^i_k=q^j_k+A^j_k,\ \ \forall k=1,\ldots,d-p,\ \sigma_{i}(k)=\sigma_{j}(k)$$ where $(q^i_1,..,q^i_d),(A^i_1,..,A^i_d)$ are the box and the homogeneous generator, respectively, corresponding to each chart $\sigma_i.$ \par
Let the top cones be indexed in the following order given by $\sigma_i,i=1,..,l.$ For a given top cone, let the rays be indexed by $\sigma_i(j),j=1,..,d.$ We start assigning the integers to $\sigma_1(j),j=1,..,d$ and assign a new integer for $\sigma_i(j)$ if $\sigma_i(j)\neq \sigma_{i'}(j')$ for $i>i'$ and any $j'.$\par 
Let, $$[M_i] \begin{pmatrix}
   q^i_1+A^i_1 \\
   \vdots \\
   q^i_j+A^i_j\\
   \vdots \\
   q^i_d+A^i_d\\
   
\end{pmatrix}=\begin{pmatrix}
    B^i_1\\
    \vdots\\
    B^i_j\\
    \vdots\\
    B^i_d
\end{pmatrix}.$$
Thus the following choices of integers for each ray fix the line bundle given by $$B:=(B^1_1,..,B^1_d,..,B^i_j,..,B^i_d)\in \mathbb{Z}^n$$ where $$1\leq i\leq l,\ 1\leq j \leq d$$ and $$\rho_t=\sigma_{i'}(j')\ \forall t=1,..,n,\ i'\leq i,\ j'\leq d.$$\par Next we show the fine-grading on each open chart is fixed by the given collection of integers.
We prove our claim for $i=1,2$ and the the same procedure generalizes for the other top cones. Let us denote the fine-grading of the homogeneous generators on each chart by $m_1,m_2$ for $\mathscr{U}_i,i=1,2$ respectively. Fix $\sigma_1$ and we have,
$$_{Q_1}F_{1}(A^1_1,.,A^1_d)_{m_1}=\ _{Q_1}F_{1}(A^1_1,.,A^1_{d-p},\infty,.,\infty)_{m_1-\sum_{i=d-p+1}^{d}A^1_i\eta_i}.$$
The fine-grading $m_1\in DG_{\sigma_1},\ m_2\in DG_{\sigma_2}$ are the solutions to the following equation in $X(K).$ We follow (\ref{eqn_Gluing}) to deduce the following,
$$\overline{\bigg(m_1-\sum_{k=d-p+1}^{d}A^1_k\eta_k\ ,\ -m_2+\sum_{k=d-p+1}^{d}A_k^{'}\eta'_k \bigg)}\ =\sum_{k=d-p+1}^{d}a'_{d-p+k}\mu_{k}.$$
From the fact that $X(K)\cong (DG_{\sigma_1}\bigoplus DG_{\sigma_2})/DH$ we see that $\mu_k=\overline{(x_k,y_k)}$ for some $(x_k,y_k)\in DG_{\sigma_1}\bigoplus DG_{\sigma_2}.$ Note that $A^1_k,A'_k,a'_{d-p+k}$ are uniquely determined by the n-tuple of integers $(B^1_1,..,B^1_d,..,B^i_j,..,B^i_d)$ and the matrices $[M_1],[M_2].$\par
If $(m'_1,m'_2)$ is another pair of solution for the above equation in $DG_{\sigma_1}\times DG_{\sigma_2}$ then one observes $$(m_1-m'_1,m_2-m'_2)=(\phi_1(h),\phi_2(h))$$ for some $h\in H.$\par
Therefore,
$$m_1=\sum_{k=d-p+1}^{d}A^1_k\eta_k+ a'_kx_k\ $$ and $$m_2=\sum_{k=d-p+i}^{d}a'_ky_k-A'_k\eta'_k$$ are solutions modulo $H.$ \par
Next let us describe $$DG_{\sigma_1}\cong \prod_{j=1}^{d+r}\mathbb{Z}_{a_j},\ DG_{\sigma_1}/H\cong \prod_{j=1}^{d+r}\mathbb{Z}_{\lambda_j}$$ and 
$$\ DG_{\sigma_2}\cong \prod_{j=1}^{d+r}\mathbb{Z}_{a'_j},\ DG_{\sigma_2}/H\cong \prod_{j=1}^{d+r}\mathbb{Z}_{\lambda'_j}$$ in their invariant factor form. Let us also describe $$m_1=\bigg(m_{1,1}(mod\ \mathbb{Z}_{a_1}),..,m_{1,j}(mod\ \mathbb{Z}_{a_j}),..,m_{1,d+r}(mod\ \mathbb{Z}_{a_{d+r}})\bigg)$$ and $$m_2=\bigg(m_{2,1}(mod\ \mathbb{Z}_{a'_1}),..,m_{2,j}(mod\ \mathbb{Z}_{a'_j}),..,m_{2,d+r}(mod\ \mathbb{Z}_{a'_{d+r}})\bigg)$$ in their invariant factor form. Thus, the following description uniquely determines the fine-grading $m_1,m_2$ in terms of $B$ which are given by,
$$m_1=\bigg(m_{1,1}(mod\ \mathbb{Z}_{\lambda_1}),..,m_{1,j}(mod\ \mathbb{Z}_{\lambda_j}),..,m_{1,d+r}(mod\ \mathbb{Z}_{\lambda_{d+r}})\bigg)$$ and $$m_2=\bigg(m_{2,1}(mod\ \mathbb{Z}_{\lambda'_1}),..,m_{2,j}(mod\ \mathbb{Z}_{\lambda'_j}),..,m_{2,d+r}(mod\ \mathbb{Z}_{\lambda'_{d+r}})\bigg).$$
Thus $B\in \mathbb{Z}^n$ determines a line bundle by $L_{B}$ and hence from the above description we obtain an isomorphism 
\begin{align*}
  f\colon \mathbb{Z}^n & \longrightarrow Pic_{\mathbb{T}}(\sX) \\[-1ex]
  B & \longmapsto L_{B}
\end{align*}
respecting the group law $(B+B')\to L_{B+B'}\cong L_{B}\otimes L_{B'}$ and $(0,..,0)\to \sO_{\sX}, $ (trivial linearization). \end{proof}
\par
In this paper we work with slope-stable torsion free sheaves $\mathscr{F}$ and we want to classify the indecomposable sheaves, as stable implies indecomposable. More precisely, $\mathscr{F}$ is slope-stable if and only if the reflexive sheaf $\mathscr{F}^{**}$ is so. In particular $\mathscr{F}$ is indecomposable if and only if its reflexive hull is indecomposable. The following result will give us more information about the characteristic functions of indecomposable sheaves in the next section.
\begin{thm}\label{Reflexive hull}
Let $\mathscr{F}$ be a toric reflexive sheaf on $\sX$ and on some open chart $\mathscr{U}_{\sigma}$ if  the $S-family,\ \hat{F}^{\sigma}$ corresponding to $\mathscr{F}|_{\mathscr{U}_{\sigma}}$ has at least two box summands, then $\mathscr{F}$ is equivariantly decomposable into toric subsheaves of smaller rank.
\end{thm}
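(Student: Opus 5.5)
The approach is to transport the box decomposition on $\mathscr{U}_\sigma$ to a global decomposition. I will use two inputs: reflexive sheaves are determined by their restriction to the union of $\mathbb{T}$-orbits of codimension $\le 1$, and the compatibility conditions of Theorem \ref{Gluing Formula}.

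\emph{Step 1 (local splitting).} On $\mathscr{U}_\sigma$, Prop. 3.4 of \cite{GJK} gives $\mathscr{F}|_{\mathscr{U}_\sigma}\cong\bigoplus_{b\in B_\sigma(\mathbb{T})}{}_b\mathscr{F}$. By hypothesis at least two summands are nonzero. Choose a nonempty proper subset $\Lambda\subset B_\sigma(\mathbb{T})$ that contains some $b$ with ${}_b\mathscr{F}\neq0$ and omits another such $b'$. Put $\mathscr{E}_1^\sigma=\bigoplus_{b\in\Lambda}{}_b\mathscr{F}$ and $\mathscr{E}_2^\sigma=\bigoplus_{b\notin\Lambda}{}_b\mathscr{F}$. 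Both are nonzero reflexive on $\mathscr{U}_\sigma$, and both have rank strictly smaller than $\rk\mathscr{F}$: a nonzero torsion-free summand has positive rank because it embeds into its generic fibre.

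\emph{Step 2 (global subsheaves).} Let $j:\mathbb{T}\hookrightarrow\sX$ be the dense open orbit. Set
\[
\mathscr{F}_k:=\mathscr{F}\cap j_*\bigl(\mathscr{E}_k^\sigma|_{\mathbb{T}}\bigr)\subset j_*(\mathscr{F}|_{\mathbb{T}}),\qquad k=1,2 .
\]
These are $\mathbb{T}$-equivariant, saturated in $\mathscr{F}$, hence reflexive, and $\mathscr{F}_1\cap\mathscr{F}_2=0$. Over $\mathscr{U}_\sigma$ we have $\mathscr{F}_1\oplus\mathscr{F}_2=\mathscr{F}$ by Step 1, since each ${}_b\mathscr{F}$ is saturated in $\mathscr{F}|_{\mathscr{U}_\sigma}$. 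It remains to show that $\mathscr{F}_1\oplus\mathscr{F}_2\to\mathscr{F}$ is surjective on every other chart $\mathscr{U}_\tau$. By reflexivity of both sides it suffices to check this at the generic points of the $\mathbb{T}$-invariant divisors, i.e. on the codimension-one orbits. Divisors for rays of $\sigma$ are already handled by Step 1. So the content lies in the rays $\rho\notin\sigma$.

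\emph{Step 3 (the main obstacle: rays outside $\sigma$).} Fix $\tau$ containing such a ray. Connect $\sigma$ to $\tau$ by a chain of top cones with adjacent cones sharing a facet, so that $p=1$ in the notation of Theorem \ref{Gluing Formula}. For adjacent $\sigma,\tau$, I will evaluate both sides of (\ref{eqn_Gluing}) at points $Q+\sum_{i\le d-1}l_i\chi_i$. The left side splits according to whether $Q'\in S_{\sigma,Q}$ lies in $\Lambda$. The plan is to show that the sets $S_{\sigma,Q}$ and $S_{\tau,Q}$ induce a bijection-compatible partition. Concretely, each $\tau$-box element $P$ should be sent to a class in $B_\sigma(\mathbb{T})$ modulo the lattice generated by the shared weights and $\chi_d$. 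Using the description of $B_1,B_2$ via the cokernels of $C_{p\times p}$ and $A_{p\times p}$ (and $|B_i|=|DG/DH|$), I expect each $\tau$-summand ${}_P\hat F_\tau$ to glue into the $\sigma$-side summands of exactly one class, as a $K$-representation. If this holds, then ${}_P\mathscr{F}|_{\mathscr{U}_\tau}$ lies entirely in $\mathscr{F}_1$ or entirely in $\mathscr{F}_2$. This induces a partition $\Lambda_\tau\subset B_\tau(\mathbb{T})$ with $\mathscr{F}|_{\mathscr{U}_\tau}=\mathscr{F}_1|_{\mathscr{U}_\tau}\oplus\mathscr{F}_2|_{\mathscr{U}_\tau}$.

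Propagating along the chain covers all charts. Consistency along different chains is automatic, because $\mathscr{F}_k$ was defined globally in Step 2. This gives $\mathscr{F}=\mathscr{F}_1\oplus\mathscr{F}_2$ equivariantly. If the class-by-class matching of box elements fails in general, the fallback is to refine $\Lambda$ to a union of full orbits of the equivalence relation generated by these matchings across all charts. Then I would argue that this relation cannot merge two distinct nonzero $\sigma$-box summands, using that the fine gradings differ by characters of $K$ detected by (\ref{cokernel}).
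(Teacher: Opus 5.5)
Your Steps 1--2 are sound. They are also rank-free, unlike the paper's rank-$2$ case analysis. The sheaves $\mathscr{F}_k=\mathscr{F}\cap j_*(\mathscr{E}^\sigma_k|_{\mathbb T})$ are equivariant and saturated, hence reflexive, and $\mathscr{F}_1\oplus\mathscr{F}_2\hookrightarrow\mathscr{F}$ is an isomorphism once it is one at the generic point of every invariant divisor. This is essentially the paper's reduction to the filtrations of the generic fibre attached to each ray. After this reduction, however, the whole content of the theorem is one claim: for every ray $\rho\notin\sigma$, $\mathscr{F}$ along $D_\rho$ is adapted to the generic splitting $\mathscr{E}^\sigma_1\oplus\mathscr{E}^\sigma_2$. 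Step 3 does not prove this. It consists of ``I expect'', ``if this holds'', and a fallback that is only announced, so the gap sits exactly at the one nontrivial step.

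Moreover, the mechanism you propose for Step 3 fails in general. You want each $\tau$-box summand ${}_P\hat F_\tau$ to glue into a single $\sigma$-class, so that a partition $\Lambda_\tau\subset B_\tau(\mathbb T)$ realises the splitting on $\mathscr{U}_\tau$. By the paper's lemma on $|B_1|,|B_2|$, at a point of $\mathscr{U}_{\sigma\tau}$ the $\sigma$-side of (\ref{eqn_Gluing}) has $|DG_{\tau}/DH|$ box terms and the $\tau$-side has $|DG_{\sigma}/DH|$, and these differ in general. Already in the paper's one-dimensional example with $a=1,b=2$ ($\mathscr{U}_1\cong\mathbb{C}$, $\mathscr{U}_2\cong[\mathbb{C}/\mu_2]$, $DH=0$) there are two $\sigma_1$-terms against one $\sigma_2$-term. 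In that situation:
\begin{itemize}
\item a single $\tau$-box summand generically contains $\sigma$-summands from both $\Lambda$ and its complement;
\item no partition of $B_\tau(\mathbb T)$ can produce the splitting;
\item the equivalence relation of your fallback \emph{does} merge the two nonzero $\sigma$-classes, although $\mathscr{F}$ still splits.
\end{itemize}

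On such a chart the splitting lives inside one box summand and is detected by the $X(G_\tau)$ fine grading. Concretely, it is given by the characters of the finite subgroup of $\mathbb T\times G_\tau$ acting trivially on the cover $\mathbb{C}^d$. These characters are preserved by all multiplication maps $\chi_i(m)$, and compared through $\mathscr{U}_{\sigma\tau}$ (where the twists $\sum a_i\mu_i$ of (\ref{eqn_Gluing}) enter) they refine the $\sigma$-box classes. This forces each filtration step for the rays of $\tau$ outside $\sigma$ to be spanned by homogeneous pieces. It is the step the paper asserts in rank $2$ when it says that on a neighbouring chart with one box summand the new filtrations are built from the same lines $p\neq p'$, so that all filtrations are adapted to $\mathbb{C}^2=p\oplus p'$.

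For orbifolds there is a shortcut that avoids the gluing formula. The finite kernel of $\mathbb T$ acting on the cover of $\mathscr{U}_\sigma$ acts trivially on $\sX$, so it acts on $\mathscr{F}$ by equivariant automorphisms. Its isotypic decomposition is a global splitting that restricts to the box decomposition on $\mathscr{U}_\sigma$. Your Step 3 needs either this argument or the fine-grading argument above in place of box matching.
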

\begin{proof}
We prove the theorem for rank 2. The case of higher ranks can be treated in a similar way. For each open substack $\mathscr{U}_\sigma$ we see that $\mathscr{F}|_{\mathscr{U}_{\sigma}}$ has two or one box summand. Each summand of the sheaf in an open substack is a reflexive sheaf. Therefore, it can be described in the same way using (Section 3,\cite{GJK}). Note that in case of rank 2, the summands are equivariant reflexive sheaves of rank 1 and hence are given by equivariant line bundles. \par
We first ignore the action of the local group on each open chart and glue the sheaves. 
Assume for $\mathscr{U}_{\sigma_i}$ that $S-family,\ \hat{F}_{\sigma_i}$ has two box elements $Q_i,Q'_i$ and the summands corresponding to the ones are given by $_{Q}\hat{F}_{\sigma_i}$ with homogeneous generators given by $(A^i_1,..,A^i_d)$ and $_{Q'}\hat{F'}_{\sigma_i}$ with homogeneous generators given by $(A'^i_1,..,A'^i_d).$ 
Let us represent $$Q_i=(q^i_1,..,q^i_d),\ Q'_i=(q'^i_1,..,q'^i_d).$$ Therefore we have $$\hat{F}_{\sigma_i}=\ _{Q_i}\hat{F}_{\sigma_i}\bigoplus\ _{Q'_i}\hat{F'}_{\sigma_i}.$$
We have, 
$$_{Q_i}\hat{F}_{\sigma_i}(\infty,..,l_k,..,\infty) = \left\{ 
\begin{array}{ll}
0 & \ l_k\ <  A^i_k \ \\
p \cong \mathbb{C} & \ l_k\geq A^i_k.\
\end{array}
\right.$$
Similarly, we have
$$_{Q'_i}\hat{F'}_{\sigma_i}(\infty,..,l_k,..,\infty) =\left\{
\begin{array}{ll}
0 & \ l_k\ <  A'^i_k \ \\[2ex]
p' \cong \mathbb{C} & \ l_k\geq A'^i_k.\ 
\end{array}
\right.$$ Note that $p\neq p' \in \mathbb{P}^1.$\par
For some $\mathscr{U}_{\sigma_j},i\neq j$ let us assume without loss of generality $\sigma_i(k)=\sigma_j(k), k=1,..,d-p.$ There are only two possible cases with which we deal accordingly. \par If there is one box summand for the chart $\mathscr{U}_{\sigma_j}$ given by $$Q_j=(q^i_1,..,q^i_{d-p},q^j_{d-p+1},..,q^j_{d}),$$ then from (\ref{Gluing Formula}) we see that $$q^i_k=q'^i_k,\  k=1,..,d-p.$$
In this case we can describe $S-family,\  \hat{F}_{\sigma_j}$ for $k=1,..,d-p$ as follows,
$$_{Q_j}\hat{F}_{\sigma_j}(\infty,..,l_k,..,\infty)=\left\{ 
\begin{array}{ll}
0 & \ \ l_k\ <  min(A^i_k,A'^i_k) \ \\[2ex]
p \cong \mathbb{C} & \ \ min(A^i_k,A'^i_k)\leq l_k < max(A^i_k,A'^i_k) \ \\[2ex]
\mathbb{C}^2 & \ \   max(A^i_k,A'^i_k)\leq l_k.\ \\
\end{array}
\right.$$ 
For $k=d-p+1,..,d$  we have 
$$_{Q_j}\hat{F}_{\sigma_j}(\infty,..,l_k,..,\infty)=\left\{ 
\begin{array}{ll}
0 & \ \ l_k\ < A^j_k \ \\[2ex]
p' \cong \mathbb{C} & \ \ A^j_k \leq l_k < A^j_k+\lambda^j_k,\  \lambda^j_k \in \mathbb{N}_{>0}\ \\[2ex]
\mathbb{C}^2 & \ \   A^j_k+\lambda^j_k\leq l_k.\ \\
\end{array}
\right.$$ 
If some other chart corresponds to a top cone $\mathscr{U}_{\sigma_{j'}}$ such that $\sigma_j(k)=\sigma_{j'}(k)$ for some $d-p+1\leq k\leq d,$ then two cases are possible. If there is one box summand $Q_j'$ then $q^j_k=q'^j_k$ then the $S-family$ corresponding to the ray $\sigma_{j'}(k)$ are determined by the filtration of $\sigma_j(k).$\par If there are two box summands corresponding to $Q_{j'},Q'_{j'}$ given by $$\hat{F}_{\sigma_{j'}}=\ _{Q_{j'}}\hat{F}_{\sigma_{j'}}\bigoplus\ _{Q'_{j'}}\hat{F'}_{\sigma_{j'}}$$ then $q^{j'}_k=q'^{j'}_k=q^j_k$ and we have the following filtration given by
$$_{Q_{j'}}\hat{F}_{\sigma_{j'}}(\infty,..,l_k,..,\infty)=\left\{ 
\begin{array}{ll}
0 & \ \ l_k\ < A^j_k+\lambda^j_k \ \\[2ex]
p \cong \mathbb{C} & \ \  l_k \geq A^j_k+\lambda^j_k, \ \\[2ex]
\end{array}
\right.$$ 
and for the other summand given by
$$_{Q'_{j'}}\hat{F'}_{\sigma_{j'}}(\infty,..,l_k,..,\infty)=\left\{ 
\begin{array}{ll}
0 & \ \ l_k\ < A^j_k \ \\[2ex]
p' \cong \mathbb{C} & \ \  l_k \geq A^j_k. \ \\[2ex]
\end{array}
\right.$$ 
Thus, if the filtration corresponding to one ray in one chart is determined, then so is the filtration corresponding to the ray in every chart. In this case the reflexive sheaf decomposes as direct sum of two equivariant line bundles given by the integers at which the filtration jumps for each ray. The local group action on the weight spaces of $\hat F_{\sigma_j}$ are uniquely determined by (\ref{Gluing Formula}) as in the proof of (\ref{LB}).\par
If there are two box summands for $\mathscr{U}_{\sigma_j}$ then denote them by $Q_j,Q'_{j}.$ For $1\leq k \leq d-p$ let, $q^i_k\neq q'^i_k.$ In this case from (\ref{Gluing Formula}) we have without loss of generality, $q^i_k=q^j_k$ and $q'^i_k=q'^j_k.$ Then the box summands correspond according to the box elements. When $q^i_k=q^j_k=q'^i_k=q'^j_k$ from (\ref{Gluing Formula}) the filtration for each box summand can be determined from the position of the homogeneous generator on each summand corresponding to $k.$ In both cases the reflexive sheaf decomposes equivariantly as direct sum of line bundles and the local group action on the weight spaces are determined as before. 
\end{proof}
\begin{example}
The above discussion allows us to classify stable reflexive sheaves of rank 2 on any smooth toric DM stack by studying indecomposability. First ignore the local group action and glue the sheaf. Similarly, to each chart, we have one fixed box element given by $Q_i=(q^i_1,..,q^i_d)$. Note that $q^i_k=q^j_k,\ \text{if}\  \sigma_i(k)=\sigma_j(k).$  On that chart the filtration is given as follows,
$$_{Q_i}\hat{F}_{\sigma_i}(\infty,..,l_k,..,\infty)=\left\{ 
\begin{array}{ll}
0 & \ \ l_k\ < A^i_k \ \\[2ex]
p_k \cong \mathbb{C} & \ \ A^i_k \leq l_k < A^i_k+\lambda^i_k,\  \lambda^i_k \in \mathbb{N}_{>0}\ \\[2ex]
\mathbb{C}^2 & \ \   A^i_k+\lambda^i_k\leq l_k.\ \\
\end{array}
\right.$$ 
Using (\ref{Gluing Formula}) we see that each sheaf is determined by the following data (one for each ray) given by $(A_1,..A_n),(\lambda_1,..,\lambda_n)\in (\mathbb{N}_{\geq 0})^n,(p_1,..,p_n)\in (\mathbb{P}^1)^n.$ $\lambda_i$ are allowed to be $0.$ Moreover, the local group action on each weight space is uniquely determined as in the proof of 
(\ref{LB}). In this case, the limiting vector space may not decompose as a direct sum of weight spaces of dimension $1$ with different weights.\par
In the case of smooth toric DM surfaces, reflexive sheaves being locally free, from Klyachko's splitting criterion any rank r toric vector bundle (given by one box element on each chart) decomposes if the filtration corresponding to all rays shares a common basis element for $\mathbb{C}^r.$\par 
Similarly for equivariant toric reflexive sheaves, decomposition into smaller rank equivariant reflexive sheaves occur when all filtration corresponding to each ray shares common basis elements. 

\end{example}
\section{Fixed point Locus}
\subsection{Characteristic functions}\label{Char}
In order to understand the toric structure on torsion free sheaves on $\sX$ we introduce certain functions which describe the dimension of the  weight spaces on each chart corresponding to a character of the torus. In essence, the shape of this function captures the module structure of the torsion free sheaf and allows us to construct moduli spaces using GIT, parametrizing toric torsion free sheaves, such that this function stays invariant. \par  Let $\mathscr{F}$ be a torsion free sheaf on a toric DM stack $\sX$ and we define the corresponding characteristic function as 
$${\vec\chi_{\mathscr{F}}}= \coprod \limits_{b_1\in B_{\sigma_{1}}(\mathbb{T})}..\coprod\limits_{b_l\in B_{\sigma_l}(\mathbb{T})}\ _{(b_1,..,b_l)}\vec \chi_{\mathscr{F}}$$ where,
$$_{(b_1,..,b_l)}\vec\chi_{\mathscr{F}}:(\mathbb{Z}^d)^{l}\to \mathbb{Z}^l$$ given by,
$$\big(\ _{(b_1,..,b_l)}\chi^{\sigma_1}_{\mathscr{F}}(m_1),..,\ _{(b_1,..,b_l)}\chi^{\sigma_l}_{\mathscr{F}}(m_l)\ \big)$$
$$=\big(\mathsf{dim}_{\mathbb{C}}(_{b_1}F^{\sigma_{1}}(m_1)),..,\mathsf{dim}_{\mathbb{C}}(_{b_l}F^{\sigma_l}(m_l))\big)$$ given by restricting to all $l$- tuples $(b_1,..,b_l)$ satisfying the Gluing formula (\ref{Gluing Formula}) and denote the subset of $l$- tuples fixed by the Gluing formula by $B.$\par
\begin{rmk}\label{Stable remark}
Next we focus on the case of stable sheaves. Stability implies indecomposable sheaves. From (\ref{Reflexive hull}) we see that indecomposable reflexive sheaves have only one $b_i$ non-zero for the box summand corresponding to each top cone $\sigma_i.$ This forces the characteristic functions of stable torison free sheaves to be of the form,
$${\vec\chi_{\mathscr{F}}}=\ _{(b_1,..,b_l)}\vec \chi_{\mathscr{F}}.$$
\end{rmk}
As in (\cite{Koo2}) we can show that for a torsion free sheaf the characteristic function is a finer invariant and fixing one fixes the modified Hilbert polynomial with respect to the polarizations.\par
Defining $Gr(m,n)$ as $m$ dimensional subspaces in $\mathbb C^{n},$ we define the following quasi projective variety so that we can realize the torsion free framed $S$- family with characteristic function $\vec\chi$ as closed points of a locally closed subscheme of this ambient variety given by (we restrict to $(b_1,,b_l)$ satisfying the above compatibility),
$$\mathscr{A}=\coprod\limits_{(b_1,..,b_l)\in B}\big (\prod\limits_{i=1}^{l}\prod\limits_{m_i\in \mathbb{Z}^d} Gr(_{(b_1,..,b_l)}\chi^{\sigma_{i}}_{\mathscr{F}}(m_i),r)\ \big).$$ Denote the locally closed subscheme by $\mathscr{N}_{\vec \chi}.$ There is an action of $G:=SL(r,\mathbb C)$ on $\mathscr{A}$ leaving $\mathscr{N}_{\vec\chi}$ invariant. For any choice of $G$- equivariant line bundle $\mathscr{L}\in Pic^{G}(\mathscr{N}_{\vec\chi})$ we get the notion of GIT (semi)stable elements of $\mathscr{N}_{\vec\chi}$ with respect to $\mathscr{L}$, (\cite{Dol}). Let us denote the $G$ invariant subset of (semi)stable points by $(\mathscr{N}^{ss}_{\vec\chi})
\ \mathscr{N}^{s}_{\vec\chi}.$\\\par
We declare a torsion free sheaf $\mathscr{F}$ with characteristic function $\vec\chi$ as GIT (semi)stable if its corresponding framed $S$-family is such (see in \cite{Koo1}, \cite{Koo2}, \cite{Koo3}, \cite{WW}). We can form the corresponding categorical/ geometric quotients for the $G$ action on the semi-stable and stable locus and obtain the moduli spaces as quasi projective scheme, $(M^{ss}_{\vec\chi})\ M^{s}_{\vec\chi}$, the latter being Zariski-open inside the former.\par
Let $(\mathscr{M}^{ss}_{\vec\chi})\mathscr{M}^{s}_{\vec\chi}$ be the moduli functor of modified Gieseker (semi)stable torsion free sheaves with $\vec\chi$ as characteristic function with respect to $\sO_{X}(1)$ and $\Xi$, both equipped with torus equivariant structures. Under the existence of a line bundle matching GIT and modified Gieseker semi-stability we have similar results to (\cite{Koo2}, Theorem 3.11, 3.12)
stating that $(\mathscr{M}^{ss}_{\vec\chi})\mathscr{M}^{s}_{\vec\chi}$ is corepresented by $(M^{ss}_{\vec\chi})\ M^{s}_{\vec\chi}.$ Here $M^{s}_{\vec\chi}$ is the coarse moduli space of the functor $\mathscr{M}^s_{\vec\chi}$.\\
\subsection{Matching Stability}\label{GIT and Gieseker}
From this section onward we mostly concentrate on the case of smooth toric DM orbifolds.
In \cite{Koo2} we have that the purity of a toric coherent sheaf can be tested by testing the purity of the equivariant coherent subsheaves. Then  they show that for fixed equivariant polarization data Gieseker semi-stability and stability can be checked by studying the corresponding property of their subsheaves. Throughout the proof, the recipe they make use of is the existence of Krull-Schmidt decomposition property in the category of coherent sheaves on the smooth toric variety, connectedness of $\mathbb{T}^{cl}$ and (\cite{Per}, Theorem 2.30) which is the Complete Reducibility Theorem. All of the above recipes hold in our case as well and we infer that in our case it is enough to test the modified Gieseker (semi)stability with respect to equivariant polarization $(H,\Xi)$ of torsion free sheaves by studying their coherent subsheaves.\par
Recall (\ref{Reflexive hull}), for stable sheaves we are working with. With one box summand per chart, stable equivariant torsion free sheaf admits a finite locally free equivariant resolution on a smooth toric DM stack.
Under this condition we can match the GIT stability and the Gieseker stability as in (\cite{Koo2}, Thm.3.21).
\begin{thm}\label{Gieseker}
Let $\sX$ be a smooth projective orbifold toric DM stack with a polarization $(H,\Xi)$ given by an ample line bundle and an ample equivariant generating sheaf, respectively. Then for any $\vec\chi$ there is an ample equivariant line bundle $\mathscr{L}\in Pic^{G}(\mathscr{N}_{\vec\chi})$ such that any torsion free sheaf on $\sX$ with characteristic function $\vec\chi$ is GIT stable with respect to $\mathscr{L}$ if and only if modified Gieseker stable with respect to the fixed polarizations.
\end{thm}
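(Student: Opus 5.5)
Following the approach of (\cite{Koo2}, Thm.~3.21), the plan is to build $\mathscr{L}$ explicitly on the ambient product of Grassmannians $\mathscr{A}$, to compute its Hilbert--Mumford weights, and then to recognise these weights as the leading coefficients of modified Hilbert polynomials of equivariant subsheaves. By Remark~\ref{Stable remark}, a stable $\mathscr{F}$ with characteristic function $\vec\chi$ has exactly one box summand $b_i$ on each chart $\mathscr{U}_{\sigma_i}$. So $\vec\chi={}_{(b_1,\dots,b_l)}\vec\chi$, and each framed $S$-family is a finite collection of subspaces ${}_{b_i}F^{\sigma_i}(m_i)\subset\mathbb{C}^r$. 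Only finitely many $m_i$ matter, because of the stabilisation used in the proof of Theorem~\ref{Gluing Formula}. I would take
\begin{equation*}
\mathscr{L}=\bigotimes_{i=1}^{l}\bigotimes_{m_i} \mathscr{O}_{Gr({}_{b_i}\chi^{\sigma_i}(m_i),r)}(\kappa_{i,m_i}),
\end{equation*}
restricted to $\mathscr{N}_{\vec\chi}$, with $SL(r,\mathbb{C})$-linearisation given by the Plücker embeddings and integer weights $\kappa_{i,m_i}$ still to be fixed. Dolgachev's criterion (\cite{Dol}) then reduces GIT (semi)stability to the following: for every proper nonzero subspace $W\subset\mathbb{C}^r$, the numerical inequality
\begin{equation*}
\sum_{i,m_i}\kappa_{i,m_i}\Big(\dim({}_{b_i}F^{\sigma_i}(m_i)\cap W)\,r-\dim W\cdot{}_{b_i}\chi^{\sigma_i}(m_i)\Big)\;(\le)<\;0
\end{equation*}
must hold.

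The second step identifies this inequality with modified Gieseker stability. By the discussion at the start of \S\ref{GIT and Gieseker}, it suffices to test equivariant saturated subsheaves $\mathscr{F}'\subset\mathscr{F}$. Each such subsheaf is determined by the intersections ${}_{b_i}F^{\sigma_i}(m_i)\cap W$ with $W=\mathscr{F}'$ at the generic point. This uses the gluing compatibility of Theorem~\ref{Gluing Formula}, together with the fact that the fine $DG_{\sigma_i}$-grading is forced by the box data, as in the proof of Corollary~\ref{LB}.

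Next, the modified Hilbert polynomial $H_\Xi(\mathscr{F},m)=\chi(\sX,\mathscr{F}\otimes\Xi^\vee\otimes\pi^*\mathscr{O}_X(m))$ has to be expressed in terms of $\vec\chi$. I would tensor with the fixed equivariant structures on $\Xi^\vee$, which splits into equivariant line bundles $L_B$ by Corollary~\ref{LB} since $\Xi$ is a toric bundle on an orbifold, and with $\pi^*H$. Then I would compute $H^0$ weight space by weight space. For $m\gg0$, the Euler characteristic equals $\dim H^0$, and that dimension is a finite sum over $X(\mathbb{T})$ and over the fine grading of $\dim$ of the relevant weight spaces intersected across charts. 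The result is a linear expression in the values of $\vec\chi$, and the same expression holds for $\mathscr{F}'$ with $\dim({}_{b_i}F^{\sigma_i}(m_i)\cap W)$ in place of ${}_{b_i}\chi^{\sigma_i}(m_i)$.

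Because $\mathscr{F}$ is torsion free with one box per chart, it admits a finite locally free equivariant resolution. This lets me argue, as in \cite{Koo2}, that the comparison $h_\Xi(\mathscr{F}')<h_\Xi(\mathscr{F})$ is decided by evaluating at a single sufficiently large integer $m_0$. That integer $m_0$ is uniform over all subsheaves, since there are only finitely many possible intersection dimension vectors. I would then choose $\kappa_{i,m_i}$ to be the coefficient of $\dim({}_{b_i}F^{\sigma_i}(m_i)\cap W)$ in this evaluated expression, shifted by a large positive constant so that $\mathscr{L}$ is ample. The constant shift contributes $c\,(r\dim W-r\dim W)=0$ and therefore does not change the weights. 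With this choice the two inequalities coincide term by term.

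The main obstacle is the linearity step for the modified Hilbert polynomial on a stack. On the variety case the weight-space count uses only the fan, but here $\Xi^\vee$ introduces a $DG_{\sigma}$-fine-grading, and the invariant part on each chart picks out specific graded pieces. My first attempt would be to decompose $\Xi^\vee$ into rank-one equivariant summands. For each summand I would use the gluing data to show that the invariant sections of $\mathscr{F}\otimes L_B(m)$ depend only on the box-$b_i$ weight spaces, with the fine grading fixed, so that the count stays a linear function of the $\dim({}_{b_i}F^{\sigma_i}(m_i)\cap W)$ with finitely many nonzero coefficients. Uniformity of $m_0$ would then follow as in \cite{Koo2}, by bounding the regularity of all equivariant subsheaves with characteristic function dominated by $\vec\chi$.
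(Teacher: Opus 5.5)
Your overall architecture matches the paper's, which follows Kool: a product of Grassmannians, Hilbert--Mumford weights, a formula for $H_\Xi$ that is linear in the dimensions of the $S$-family spaces, and evaluation at one large integer, uniform because there are finitely many dimension vectors. The step that fails is ampleness. You take $\kappa_{i,m_i}$ to be the values at $m_0$ of the coefficient polynomials and then add a large constant $c$, claiming this contributes $c(r\dim W-r\dim W)=0$. In fact the shift adds
\[
c\sum_{i,m_i}\Bigl(r\dim({}_{b_i}F^{\sigma_i}(m_i)\cap W)-\dim W\cdot{}_{b_i}\chi^{\sigma_i}(m_i)\Bigr),
\]
which is the Hilbert--Mumford weight of the uniform linearization and depends on $W$. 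For a single factor $Gr(1,2)$ with point $p$, it equals $c$ if $W=p$ and $-c$ for any other line $W$. Your cancellation is what happens when one adds $c\cdot\operatorname{rk}$ to the Hilbert polynomial, i.e.\ shifts only the factors with ${}_{b_i}F^{\sigma_i}(m_i)=\mathbb{C}^r$. Those factors are points and contribute nothing to ampleness.

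So positivity of the weights must come from the Hilbert polynomial itself, and this is exactly the content of the paper's proof. After twisting so that all box elements are $0$, one writes $H_\Xi(\mathscr{F},t)=\sum\Xi_{\sigma_{i'},j',k',\vec\lambda}(t)\dim({}_{b}F^{\sigma_{i'}}(\lambda_1,\dots,\lambda_{j'},\infty,\dots,\infty))$. One then shows that each coefficient is a polynomial of degree $\dim\sX-j'$ with positive leading coefficient. Orbifold Riemann--Roch shows that the twisted sectors do not reach this degree and that only $\operatorname{rk}\Xi$ enters on the untwisted sector. Positivity then follows from Nakai--Moishezon on the simplicial fan. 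Hence the integers $\Xi_{\sigma_{i'},j',k',\vec\lambda}(R)$ for $R\gg0$ are all positive and themselves give the ample linearization, with no shift.

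Your derivation of the linear formula also has a hole. For $m\gg0$ the weight pieces of $H^0$ are intersections, across the charts, of subspaces of $\mathbb{C}^r$. The dimension $\dim\bigcap_i V_i$ is not a function of the $\dim V_i$ (take three lines in $\mathbb{C}^2$). For $\mathscr{F}'$ you would get $\dim\bigcap_i({}_{b_i}F^{\sigma_i}\cap W)$, not a combination of the numbers $\dim({}_{b_i}F^{\sigma_i}(m_i)\cap W)$ that enter the Hilbert--Mumford weight. A formula that is linear, with the same coefficients for every equivariant subsheaf, needs an Euler characteristic computation in which each chart contributes separately, and each face contributes through the limiting spaces $(\lambda_1,\dots,\lambda_{j'},\infty,\dots,\infty)$. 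A \v{C}ech-type alternating sum over the cover by the $\mathscr{U}_{\sigma_i}$ does this. Note also that Corollary~\ref{LB} computes $Pic_{\mathbb{T}}(\sX)$ but does not split $\Xi^\vee$ into equivariant line bundles; toric vector bundles need not split, e.g.\ $T\mathbb{P}^2$. You only have splitting on each affine chart, which is all such a chartwise count requires.
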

\begin{proof}
Under the observation that stable torsion free sheaves have one box summand per chart, we directly follow the proof of \cite{Koo2} and stick to the notation as there and state only the necessary changes. First note that tensoring by a suitable line bundle the box elements can be made zero on each co-ordinate without changing Gieseker stability.
First we observe that in the modified Hilbert polynomial (\ref{defn_Gieseker}) we have the chern character of the generating sheaf. In the proof we fix $(0,..,0)$ as the box element and then the proof revolves around showing that as a polynomial in $t,\  \Xi_{\sigma_{i',j',k'},\vec\lambda}(t)dim(_{b_1}F^{\sigma_i'}(\lambda_1,..,\lambda_j',\infty,..,\infty))$ is a polynomial of degree $r-j'$ with positive leading co-efficient.\par
Assume the stack is an orbifold, then we observe that in our proof we need the Riemann-Roch theorem (\ref{eqn_orbifold_RR}) and (\cite{Edidin}) to argue that we only consider the terms supported on the main component $\sX$ and in order to compute the maximal degree of the polynomial in $t,$ we need only the constant term from chern character (Equation \ref{eqn_chern}) of the generating sheaf. For Todd class contributions check (Equation \ref{Todd}).\par
To show the positivity of the leading coefficient we can use the Nakai Mosheizon criterion and the fan associated to the stack being simplicial, the same calculations go through.\par
The box elements being one per chart it is easy to see that the same sequence of positive integers given by $\Xi_{\sigma_{i',j',k'},\vec\lambda}(R),R>>0$ gives us the necessary linearization on the products of Grassmanians. Restricting to the locally closed subscheme and following the same argument, we achieve the result.
\end{proof}
Next, we prove a similar result for matching modified slope stability (Definition \ref{defn_modified_slope_stability}) and GIT stability as in (\cite{Koo2},Thm 3.20). The line bundles $\mathscr{L}^\mu\in \ Pic^{G}(\mathscr{N}_{\vec\chi})$ are different to $\mathscr{L}$ and are easier to construct.\par
Note that modified slope stability on smooth projective DM stacks do not depend on the choice of the generating sheaf $\Xi$ (Definition \ref{defn_generating_sheaf}). In other words if $\mathscr{F}$ is modified semi-stable with respect to $(H,\Xi)$ then so it is semi-stable with respect to another $(H,\Xi').$ Using (\cite{Nir}. Prop. 3.18. Remark 3.19) we know that we can always choose a ample generating sheaf on a smooth toric orbifold of the form $\bigoplus_{i=1}^{n}\mathscr{O}_{\sX}(\mathscr{D}_i),$ where $\mathscr{D}_i$ are the invariant toric divisors corresponding to $n$ rays of the fan (\ref{rmk_stability_quotient}). We have the following result corresponding to the generating sheaf $\Xi:=\ \bigoplus_{i=1}^{n}\mathscr{O}_{\sX}(\mathscr{D}_i)$.
\begin{thm}\label{slope stability}
Let $\sX$ be a smooth projective toric DM stack and let  $H$ be an
ample line bundle and $\Xi$ be a ample equivariant generating sheaf on $\sX$. Then for any $\vec\chi,$ there is an ample equivariant line bundle
$\mathscr{L}^\mu\in\  Pic^G(\mathscr{N}_{\vec\chi})$, such that any torsion free equivariant sheaf $\mathscr{F}$ on $\sX$ with characteristic 
function $\vec\chi$ satisfies
$\mathscr{F}$ is modified slope-stable $\implies \ \mathscr{F}$  is properly GIT stable with respect to $\mathscr{L}$ $\implies\ \mathscr{F} $ is modified slope-semi-stable.    
\end{thm}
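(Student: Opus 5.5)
Following Kool's proof of (\cite{Koo2}, Thm.~3.20) and the adaptations made for Theorem \ref{Gieseker}, I would build $\mathscr{L}^\mu$ directly from the data of $\vec\chi$ and then compare the Hilbert--Mumford criterion with the modified slope.

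The first reduction uses Remark \ref{Stable remark}. A slope-stable (hence indecomposable) toric torsion free sheaf has exactly one box summand per chart. After tensoring with a suitable equivariant line bundle from Corollary \ref{LB}, this box element can be taken to be $(0,\dots,0)$ on every chart; the twist shifts every modified slope by the same constant, so slope (semi)stability is unchanged. With $\Xi=\bigoplus_{i=1}^n\sO_{\sX}(\mathscr{D}_i)$, Remark \ref{rmk_stability_quotient} (i.e. \cite[Prop.~3.18]{Nir}) gives
\[
\mu_\Xi(\mathscr{E})=\frac{1}{\rk(\Xi)}\frac{\alpha_{\Xi,d-1}(\mathscr{E})}{\rk\mathscr{E}}-\frac{1}{\rk(\Xi)}\alpha_{\Xi,d-1}(\sO_{\sX}),
\]
with $\alpha_{\Xi,d-1}(\mathscr{E})=\int_{\sX}c_1(\mathscr{E})\cdot H^{d-1}$. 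So modified slope stability reduces to the usual $H$-slope of $c_1$ on the orbifold, and is independent of $\Xi$.

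Next I would express $c_1$ of an equivariant subsheaf $\mathscr{E}\subset\mathscr{F}$ combinatorially. As in \cite{Koo2}, $c_1$ depends only on the filtrations along the rays, i.e. on the limiting spaces ${}_{0}E^{\sigma_i}(\infty,\dots,\lambda,\dots,\infty)\subset \mathbb{C}^r$. This gives
\[
\int c_1(\mathscr{E})H^{d-1}=-\sum_{\rho_k}\deg_H(\mathscr{D}_k)\sum_{\lambda}\dim E^{\rho_k}(\lambda)+\text{const}\cdot\rk\mathscr{E},
\]
where $\deg_H(\mathscr{D}_k)=\int\mathscr{D}_k H^{d-1}$ is a positive rational number. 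This holds because the fan is simplicial and the toric divisors on the orbifold are $\mathbb{Q}$-Cartier, and positivity follows from Nakai--Moishezon as in Theorem \ref{Gieseker}. Only finitely many $\lambda$ per ray give nontrivial filtration steps, because $\vec\chi$ fixes where the filtration jumps.

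I would then set $\mathscr{L}^\mu$ to be the restriction to $\mathscr{N}_{\vec\chi}$ of $\bigotimes_{k,\lambda}\mathcal{O}(n\deg_H(\mathscr{D}_k))$, pulled back from the Grassmannian factors $Gr(\chi^{\rho_k}(\lambda),r)$ via Plücker. Here $n$ is a common multiple that clears the rational denominators, and the factors run over the finitely many relevant jump positions. By the Hilbert--Mumford criterion for $SL(r)$ acting on products of Grassmannians (\cite{Dol}, and Kool's computation), a point is (properly) stable exactly when every proper subspace $0\ne V\subset\mathbb{C}^r$ satisfies
\[
\frac{1}{\dim V}\sum_{k,\lambda}\deg_H(\mathscr{D}_k)\dim(V\cap F^{\rho_k}(\lambda)) < \frac{1}{r}\sum_{k,\lambda}\deg_H(\mathscr{D}_k)\dim F^{\rho_k}(\lambda),
\]
with $\le$ for semistability. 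Any such $V$ generates an equivariant saturated subsheaf whose ray filtrations are $V\cap F^{\rho_k}(\lambda)$, and the inequality then reads $\mu_\Xi(\mathscr{E})<\mu_\Xi(\mathscr{F})$. Slope stability therefore gives properly GIT stable. Conversely, properly GIT stable gives the inequalities for saturated equivariant subsheaves, which suffices for slope semistability, because testing with equivariant saturated subsheaves is enough (the reduction discussed before Theorem \ref{Gieseker}) and passing to saturation only raises the slope. The gap between stable and semistable arises because non-saturated subsheaves with the same $c_1$ are not detected.

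The main obstacle is the $c_1$ formula on the stack. The filtration jumps must be weighted by the orbifold degrees of $\mathscr{D}_k$, and the local group gradings, fixed by the Gluing Formula \ref{Gluing Formula} as in Corollary \ref{LB}, must not contribute to $\alpha_{\Xi,d-1}$. I would verify this with orbifold Riemann--Roch (\ref{eqn_orbifold_RR}): twisted sectors have codimension $\ge1$, and with $\Xi$ of the chosen form their codimension-one contributions cancel in the normalized slope.
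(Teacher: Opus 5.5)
Your reductions match the paper's: one box summand per chart, twisting the box elements to $0$, Nironi's formula reducing $\mu_\Xi$ to $c_1\cdot H^{d-1}/\rk$, $c_1$ read off from the ray filtrations, then Kool's GIT comparison. But the one ingredient you make explicit, which the paper leaves to Kool's recipe, is wrong: the line bundle $\mathscr{L}^\mu$. You put weight $n\deg_H(\mathscr{D}_k)$ on the Grassmannian factors of the ray (limiting) filtrations and weight $0$ on every other factor $Gr(\chi^{\sigma_i}(m),r)$ of $\mathscr{N}_{\vec\chi}$. That is the right bundle on the reflexive parameter space $\prod_j\prod_k Gr(k,r)$. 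For a torsion free $\vec\chi$, however, the interior subspaces $F^{\sigma_i}(m)\subset\mathbb{C}^r$ are not determined by the ray filtrations; this is exactly the difference between $\mathscr{F}$ and $\mathscr{F}^{**}$. For instance, take equivariant subsheaves of one fixed stable reflexive sheaf that differ only in the choice of a line $\ell\subset F(m_0)\cong\mathbb{C}^2$ at a corner whose lower neighbours are $0$. They form a $\mathbb{P}^1$ with the same $\vec\chi$ and the same ray data. Hence $\mathscr{N}_{\vec\chi}\to\prod_{\mathrm{rays}}Gr$ has positive-dimensional projective fibres on which your bundle is trivial. So it is not ample, although the statement requires ampleness, and the Hilbert--Mumford criterion you quote is not available. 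The first implication in fact fails for your bundle. Any section of a power of it is constant along such a fibre, so the non-vanishing locus of an invariant section not vanishing at one of these $\mu$-stable sheaves contains a positive-dimensional projective subvariety. That locus is therefore not affine, i.e.\ the point is not properly stable.

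The missing idea, which Kool's recipe supplies and which explains why the theorem is only a sandwich, is a perturbation. Take $\mathscr{L}^\mu=\mathscr{L}_{\mathrm{ray}}^{\otimes N}\otimes\mathscr{L}_0$, with $\mathscr{L}_0$ of weight $1$ on every factor. For $0\neq V\subsetneq\mathbb{C}^r$ the quantity entering Hilbert--Mumford is then $N\delta(V)+\epsilon(V)$. Here $\delta(V)=\frac{1}{\dim V}\sum w_k\dim(V\cap F^{\rho_k}(\lambda))-\frac{1}{r}\sum w_k\dim F^{\rho_k}(\lambda)$ is a positive multiple of $\mu_\Xi(\mathscr{E}_V)-\mu_\Xi(\mathscr{F})$, and $|\epsilon(V)|\le M$, the number of non-trivial interior factors. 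With integer weights, $\delta(V)\neq 0$ forces $|\delta(V)|\ge 1/r^2$. So for $N>Mr^2$, $\mu$-stable $\Rightarrow$ every $N\delta(V)+\epsilon(V)<0$ $\Rightarrow$ properly stable. Conversely, properly stable $\Rightarrow$ every $\delta(V)\le 0$ $\Rightarrow$ $\mu$-semistable. Your explanation of the stable/semistable gap via non-saturated subsheaves is not the right one. Slope stability only tests subsheaves of smaller rank, whose slope is bounded by that of their saturation, so your argument, were it valid, would prove an equivalence; that is a symptom of the missing ampleness. A minor point: for ascending filtrations the sign in your $c_1$ formula must be $+$. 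As written, your Hilbert--Mumford inequality translates into $\mu(\mathscr{E}_V)>\mu(\mathscr{F})$.
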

\begin{proof}
The proof of the above result follows the same construction as in (\cite{Koo2}, Prop 3.20) as stable sheaves are given by one box summand per chart and on each of the charts the torsion free sheaves are given by multifiltration of $\mathbb{C}^{\bigoplus r}$. We stick to the same notation and address the changes. \newline We obtain filtration of $\mathbb{C}^{\bigoplus r}$ given by $_{q^i_j}\{\beta_{\lambda}\}_{\lambda \in \mathbb{Z}}$ corresponding to $\rho_j\in \sigma_i$ given as
$$_{q^i_j}\beta_\lambda=\left\{ 
\begin{array}{ll}
0 & \ \ \lambda\ < A_j\ \\[2ex]
p_k \cong \mathbb{C} & \ \ A_j \leq \lambda < A_j+\Delta_j(1),\  \Delta_j(1) \in \mathbb{N}_{\geq0}\ \\[2ex]
\vdots\\
\mathbb{C}^r & \ \   A_j+\Delta_j(1)+..+\Delta_j(r-1)\leq \lambda,\ \Delta_j(k) \in \mathbb{N}_{\geq0}.\\\
\end{array}
\right.$$ 
The above filtration is irrespective of the top cone $\rho_j\in \sigma_i$ using (\ref{Gluing Formula}). Note that $b_i=(q^i_1,..q^i_j,..q^i_d)$ is a box element corresponding to the top cone $\sigma_i.$ For any other top cone where $\rho_j$ appears, $q^i_j$ stays unchanged from (\ref{Gluing Formula}). Tensoring by an equivariant line bundle does not change the slope (semi) stability and hence we can assume for the rest of the proof that the box elements corresponding to each chart for the $S-family,\ \hat F$ is given by $(0,..,0).$\par
The rest of the proof follows from the fact that the modified slope is determined by $c_1(\mathscr{F}).H/r$ (use \ref{eqn_orbifold_RR}). 
Then,
$$c_1(\mathscr{F})=\sum_{j=1}^{n}(rA_j+(r-1)\Delta_{j}(1)..+\Delta_j(r-1))(c_1(\mathscr{D}_j)).$$ Similarly $\Xi,\pi^*H$ can be both expressed in terms of $c_1(\mathscr{D}_j),\ 1\leq j \leq n.$
The construction of the equivariant line bundle follows from the same recipe as the rest of the proof is the same.
\end{proof}
\subsection{Fixed Point Locus}
For a fixed Hilbert polynomial $P_{\Xi}$ we denote the moduli space of modified Gieseker semi-stable torsion free sheaves by $M^{ss}_{P_{\Xi}}$ with respect to the polarization $(H,\Xi)$ which, admits a natural induced regular torus action. The same torus action restricts to the stable locus and is given by the same expression as in (\cite{Koo2}, Prop 4.1) on the closed points.\par
Assume that we can always pick a $G$- equivariant line bundle matching GIT and modified Gieseker semi-stability we have a forgetful natural transformation between the functors given by
$$\mathscr{M}^{s}_{\vec\chi} \to \mathscr{M}^{s}_{P_{\Xi}}$$ and similar in the case of semi-stable ones.
Denote by $$\vec\chi\in \chi_{P_{\Xi}}$$ the characteristic functions that give rise to the same modified Hilbert polynomial $P_{\Xi}.$
The above natural transformation gives rise to natural morphisms between the moduli spaces co-representing them, which, on the closed points is just the natural morphism forgetting the equivariant structure given by
$$\coprod\limits_{\chi\in\chi_{P_{\Xi}}}M^{s}_{\vec\chi}\to M^{s}_{P_{\Xi}}.$$ 
As in (\cite{Koo1},\cite{Koo2}) we have that the above morphism on the closed points factors through a surjection onto the torus fixed point locus given by $(M^{s}_{P_{\Xi}})^{T}_{cl}$ and we claim that fixing the equivariant structure for each stable sheaf we can make this morphism into an isomorphism.\par
We introduce the framing of a characteristic function as follows in order to fix the equivariant structure. For a fixed $(b_1,.,b_l)$, fix a top cone say $\sigma_1$ and choose the maximal $A_1,..,A_{d}$ corresponding to $_{b_1}\chi^{\sigma_{1}}$ and set each of them equal to $0.$ Stable sheaves being simple, all results in (\cite{Koo2}) for simple sheaves hold in our case and hence they hold for stable sheaves. Fixing the framing we observe keeping the underlying sheaf fixed, two equivariant structures differ by twisting with a character of the torus and hence can be realized as (\cite{FLT}) the kernel of the natural map below  
forgetting the natural equivariant structure given by,
\begin{equation}\label{ex}
0\to M\to Pic_{\mathbb T}(\sX) \to Pic({\sX}) \to 0     
\end{equation} where $M=Hom(\mathbb T, \mathbb C^{*}).$ Applying results of (\cite{JT},\cite{YJ1}) we see that the claim as in (\cite{Koo2}, eq. 14) holds in our case. Denote the morphism induced by the forgetful map as $F$ and denote the subset of framed characteristic sheaves corresponding to the Hilbert polynomial as $(\chi_{P_{\Xi}})^{fr}.$ Then the following holds,
\begin{thm}\label{claim}
$$F:\coprod\limits_{\vec\chi \in (\chi_{P_{\Xi}})^{fr}}M^{s} _{\vec\chi}\to M^{s}_{P_{\Xi}},$$ induces a bijective map on closed points onto $(M^{s}_{P_{\Xi}})^{\mathbb T}_{cl}.$  
\end{thm}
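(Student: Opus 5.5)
The plan is to follow the strategy of (\cite{Koo2}, Prop. 4.1 and Cor. 4.10), adapted to the stacky setting. The proof splits into three parts: the image lies in the fixed locus, the map is surjective onto it, and the map is injective on closed points. The key inputs are Remark \ref{Stable remark} (one box summand per chart), Theorem \ref{Gieseker} (GIT stability agrees with modified Gieseker stability), and the exact sequence (\ref{ex}).

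\emph{Image lies in the fixed locus.} A closed point of $M^{s}_{\vec\chi}$ is a modified Gieseker stable sheaf $\mathscr{F}$ carrying a $\mathbb{T}$-equivariant structure $\Phi:\sigma^*\mathscr{F}\cong p_2^*\mathscr{F}$. For each $t\in\mathbb{T}$, the map $\Phi$ restricts to an isomorphism $t^*\mathscr{F}\cong\mathscr{F}$. Hence $[\mathscr{F}]\in (M^{s}_{P_{\Xi}})^{\mathbb{T}}_{cl}$, using the description of the torus action on closed points recalled above.

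\emph{Surjectivity.} Let $[\mathscr{F}]$ be a $\mathbb{T}$-fixed stable point, so $t^*\mathscr{F}\cong\mathscr{F}$ for every closed $t\in\mathbb{T}$. I would show that $\mathscr{F}$ admits an equivariant structure, as in \cite{Koo1}, \cite{Koo2}.
\begin{enumerate}
\item Stability implies simplicity, so $\mathrm{Aut}(\mathscr{F})=\mathbb{C}^*$.
\item The set of pairs $(t,\phi_t)$ with $\phi_t:t^*\mathscr{F}\cong\mathscr{F}$ forms a central extension $1\to\mathbb{C}^*\to\mathcal{G}\to\mathbb{T}\to1$.
\item $\mathcal{G}$ is represented by an algebraic group. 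I would prove this by working with the Quot scheme description of $M^s_{P_\Xi}$ from \cite{Nir}, using the functor $F_\Xi$ to reduce to sheaves on the coarse space $X$.
\item $\mathcal{G}$ is connected, solvable and commutative, hence a torus, so the extension splits.
\item A splitting gives a $\mathbb{T}$-equivariant structure on $\mathscr{F}$ (cocycle condition included).
\end{enumerate}
By Remark \ref{Stable remark}, the resulting characteristic function has a single box tuple $(b_1,\dots,b_l)$. Twisting by a character in $M$ normalizes the maximal $A_1,\dots,A_d$ of ${}_{b_1}\chi^{\sigma_1}$ to $0$, so the characteristic function lies in $(\chi_{P_\Xi})^{fr}$. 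Theorem \ref{Gieseker} then identifies the resulting framed $S$-family as a GIT stable point, which gives a preimage.

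\emph{Injectivity.} Suppose two equivariant stable sheaves with framed characteristic functions have isomorphic underlying sheaves. Since $\mathrm{Aut}(\mathscr{F})=\mathbb{C}^*$, their equivariant structures differ by a character of $\mathbb{T}$; this is precisely the kernel $M$ in (\ref{ex}), via \cite{JT}, \cite{YJ1}. Twisting by a character shifts every weight space uniformly in $X(\mathbb{T})$, and the framing fixes the position of the maximal generator on $\sigma_1$. So the character must be trivial as far as the $X(\mathbb{T})$-grading is concerned. For the fine $X(G_{\sigma_i})$-gradings, I would show that they are determined as in the proof of Corollary \ref{LB} and Theorem \ref{Gluing Formula}. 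Consequently the two framed $S$-families are isomorphic and define the same point of $M^s_{\vec\chi}$, and two different framed $\vec\chi$ cannot produce the same sheaf.

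\emph{Main obstacle.} I expect step 3 of the surjectivity argument to be the hardest part: showing that $\mathcal{G}$ is an algebraic group and that the extension splits on a DM stack. I would handle it by reducing to the coarse space through $F_\Xi$ and the Quot scheme construction of \cite{Nir}. A second delicate point is ensuring that the twist by a character does not move the box elements $b_i$ inconsistently across charts. Here I would use the gluing compatibility $q^i_k=q^j_k$ for shared rays.
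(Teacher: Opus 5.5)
Your overall route is the paper's. Its proof is just ``the above discussion suffices'': surjectivity onto $(M^s_{P_\Xi})^{\mathbb T}_{cl}$ is imported from \cite{Koo1}, \cite{Koo2} using that stable sheaves are simple, and your steps 1--5 unpack exactly that argument. Injectivity is meant to come from the framing, together with the fact, via (\ref{ex}), that two equivariant structures on a simple sheaf differ by a character of $\mathbb T$.

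\textbf{The gap is in your injectivity step.} It sits precisely where the stacky setting differs from Kool's. Your framing normalizes the maximal $A_1,\dots,A_d$ of ${}_{b_1}\chi^{\sigma_1}$ to $0$, which is also how the paper words its framing. This fixes the integer coordinates of the generator \emph{relative to} $b_1$, but not $b_1$ itself.

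A character $m\in X(\mathbb T)$ does not merely shift the $\mathbb Z^d$-coordinates. It sends the summand ${}_{b}\hat F^{\sigma_1}$ to the summand indexed by the class of $b+m$ in $X(\mathbb T)/\langle m^1_1,\dots,m^1_d\rangle$, which is in bijection with $B_{\sigma_1}(\mathbb T)$. Take any box element $b_1'\neq b_1$ and twist by $m=b_1'-b_1\in X(\mathbb T)$. This gives a second equivariant structure on the same stable sheaf. Its characteristic function again has $A_1=\dots=A_d=0$ and the same modified Hilbert polynomial, but a different box tuple. So it is a different element of $(\chi_{P_\Xi})^{fr}$.

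Consequently, with this framing $F$ is $|B_{\sigma_1}(\mathbb T)|$-to-one on closed points whenever the box is non-trivial, which is the situation the paper is built for. Your sentence ``the framing fixes the position of the maximal generator on $\sigma_1$, so the character must be trivial'' holds only if ``position'' means the full weight $b_1+\sum_k A_k m^1_k\in X(\mathbb T)$. Your surjectivity normalization never arranges that.

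The worry in your last paragraph is a different one. Consistency across charts is automatic, since a global twist moves every $b_i$ by the class of the same $m$. The problem is that the twist moves $b_1$ at all.

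\textbf{The repair.} It is easy, but it must be made explicitly: frame by requiring the generator of the unique $\sigma_1$-summand to sit at $0\in X(\mathbb T)$. That is, impose $b_1=0$ and $A_1=\dots=A_d=0$, equivalently $B^1_1=\dots=B^1_d=0$ in the notation of Corollary \ref{LB}. Surjectivity survives: twist by the character that moves $b_1+\sum_k A_k m^1_k$ to $0$. Now the only character preserving the framing is the trivial one, and your injectivity argument closes.

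You also do not need Corollary \ref{LB} or the gluing formula for the fine gradings. Once the underlying sheaves on $\sX$ are identified, their $X(G_{\sigma_i})$-gradings are identified too. Twisting by a character of $\mathbb T$ only moves the $X(\mathbb T)$-degree.
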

\begin{proof}
The above discussion suffices.   
\end{proof}
The rest of the section is devoted to understanding the Gieseker modified stable sheaves, and the technical details mostly follow from \cite{Koo2} in our case. We mention the results.
Next we show that the above morphism $F$ not only induces a bijection onto the fixed point locus on the closed points but also on the level of isomorphism of quasi-projective schemes.\par
\begin{thm}
Let $\sX$ be a smooth toric DM stack. Let $H$ be an ample line bundle on $X,$ and $\Xi$ be an equivariant generating sheaf on $\sX.$ Let $P_{\Xi}$ be a choice of a Hilbert polynomial of degree $d.$ Let $\vec\chi \in \chi_{P_{\Xi}}$ be a characteristic function and assume we can choose an equivariant line bundle matching Gieseker and GIT stability. Then there is a natural isomorphism of quasi-projective $\mathbb{C}$ schemes of finite type given by
$$(M^s_{P_{\Xi}})^\mathbb{T}\ \cong \coprod_{\vec\chi \in (\chi_{P_{\Xi}})^{fr}}M^s_{\vec\chi}.$$
\end{thm}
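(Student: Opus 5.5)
The plan follows the strategy of \cite{Koo2}, Theorem 4.9 and Corollary 4.10. By Theorem \ref{claim}, the morphism $F$ is already a bijection on closed points onto $(M^{s}_{P_{\Xi}})^{\mathbb T}_{cl}$. What remains is to upgrade this to an isomorphism of schemes. The steps are: factor $F$ through the fixed locus scheme-theoretically, show that the factored map is a closed or open immersion on each component, and then compare tangent spaces to conclude it is an isomorphism.

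First, I will check that for every framed $\vec\chi$, the composite $M^s_{\vec\chi}\to M^s_{P_{\Xi}}$ is $\mathbb T$-equivariant when $M^s_{\vec\chi}$ carries the trivial action. Since $M^s_{\vec\chi}$ is reduced? not necessarily, this has to be done at the level of families. Over $M^s_{\vec\chi}$ there is, at least étale-locally since the $SL(r)$-quotient is geometric, a universal equivariant family $\mathscr F$. The equivariant structure gives an isomorphism $\sigma^*\mathscr F\cong p_2^*\mathscr F$ on $\mathbb T\times\sX\times M^s_{\vec\chi}$. Consequently the two maps $\mathbb T\times M^s_{\vec\chi}\to M^s_{P_{\Xi}}$, namely the action composed with $F$ and $F$ composed with the projection, agree. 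Hence $F$ factors through the scheme-theoretic fixed locus $(M^s_{P_{\Xi}})^{\mathbb T}$, which is closed because the moduli space is separated.

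Second, I will use the framing to separate the components. Two distinct framed characteristic functions give non-isomorphic equivariant structures. For a simple (stable) sheaf, the equivariant structures form a torsor under $M=Hom(\mathbb T,\mathbb C^*)$ by the exact sequence (\ref{ex}), and the framing picks exactly one of them. Characteristic functions are locally constant in flat families of equivariant sheaves, so the decomposition is into open and closed pieces. Conversely, I will show that a family $\mathscr E$ over a base $S$ parametrized by $S\to(M^s_{P_{\Xi}})^{\mathbb T}$ admits, étale-locally on $S$, an equivariant structure. Because $\mathscr E$ is simple, the isomorphisms $\sigma^*\mathscr E\cong p_2^*\mathscr E$ are unique up to $\mathcal O^*_{\mathbb T\times S}$. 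The cocycle obstruction then lies in $H^2$ of the group with values in units, and normalizing by the framing kills it, as in \cite{Koo2}, Prop. 4.4. The resulting equivariant family has locally constant $\vec\chi$, so it induces $S\to\coprod M^s_{\vec\chi}$. This gives the inverse on $S$-points, i.e.\ an inverse natural transformation of functors, using that $M^s_{\vec\chi}$ corepresents $\mathscr M^s_{\vec\chi}$ (Theorem \ref{Gieseker} applied with the matching line bundle).

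If constructing the inverse on functors proves too delicate, I will fall back on a tangent-space argument. The tangent space of $M^s_{P_{\Xi}}$ at $[\mathscr F]$ is $Ext^1(\mathscr F,\mathscr F)$, and that of the fixed locus is its $\mathbb T$-invariant part. The tangent space of $M^s_{\vec\chi}$ is the space of equivariant first-order deformations, namely $Ext^1(\mathscr F,\mathscr F)^{\mathbb T}$ computed via stacky $S$-families. On each chart $[\mathbb C^d/G_\sigma]$ the degree-zero graded piece captures exactly the invariant deformations, and the pieces glue by Theorem \ref{Gluing Formula}. Bijectivity on closed points plus an isomorphism on tangent spaces, together with a smooth GIT description, then gives the isomorphism.

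I expect the main obstacle to be the descent of equivariant structures in families over the stack. This requires that $\mathbb T$ acting on $\sX=[Z/G]$ lifts $2$-categorically with a trivializable gerbe part, which is where \cite{JT} and \cite{YJ1} are needed, and it requires that the relevant cohomology $H^1(\mathbb T,\mathcal O^*)$-type obstructions vanish for the torus including its $B\mu$ factor.
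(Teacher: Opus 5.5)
Your first step, factoring $F$ through the closed subscheme $(M^s_{P_\Xi})^{\mathbb T}$ using equivariant families, is fine. The problem is what comes after: the proposal gives no valid way to upgrade the closed-point bijection of Theorem \ref{claim} to an isomorphism of schemes, and that upgrade is the whole content of the statement.

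The tangent-space fallback does not work. A morphism that is bijective on closed points and an isomorphism on Zariski tangent spaces need not be an isomorphism: the closed immersion $\operatorname{Spec}\mathbb C[x]/(x^2)\hookrightarrow\operatorname{Spec}\mathbb C[x]/(x^3)$ has both properties. There is also no ``smooth GIT description'' to rescue it. $\mathscr N_{\vec\chi}$ is only a locally closed subscheme of a product of Grassmannians, cut out by incidence and gluing conditions, so nothing makes $M^s_{\vec\chi}$ or $(M^s_{P_\Xi})^{\mathbb T}$ smooth. You need agreement to all infinitesimal orders, not just to first order.

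In the main route, the decisive claim is that a family classified by $S\to(M^s_{P_\Xi})^{\mathbb T}$ acquires an equivariant structure \'etale-locally. This is asserted rather than proved, and the stated reason is wrong.
\begin{itemize}
\item Since $M^s_{P_\Xi}$ is only a coarse space, such a map gives, \'etale-locally, a family with $\sigma^*\mathscr E\cong p_2^*\mathscr E\otimes L$ for some $L\in\mathrm{Pic}(\mathbb T\times S)$. So even the isomorphism you want to normalize has to be produced first.
\item The framing only selects one point of the $M$-torsor of equivariant structures coming from (\ref{ex}). It gives uniqueness, not existence.
\item Over non-reduced $S$, the units of $\mathcal O(\mathbb T\times\mathbb T\times S)$ are not just constants times characters (consider $1+\epsilon\chi$). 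So the closed-point argument behind Theorem \ref{claim} does not transfer verbatim.
\end{itemize}

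The paper avoids arbitrary $S$ altogether. It first notes that only finitely many $M^s_{\vec\chi}$ are nonempty, so both sides are of finite type. It then applies \cite{Koo2}, Prop.~4.7, which reduces the isomorphism to a bijection $\coprod_{\vec\chi}\mathscr M^s_{\vec\chi}(A)\to(\mathscr M^s_{P_\Xi}(A))^{\mathbb T_{cl}}$ for every local Artinian $\mathbb C$-algebra $A$ with residue field $\mathbb C$. Theorem \ref{claim} is the length-one case, and \cite{Koo2}, Prop.~4.8 supplies the induction on length over small extensions. Over such $A$, the $A$-points of the stable moduli spaces are honest families and $\mathrm{Pic}(\mathbb T\times\operatorname{Spec}A)=0$, so the difficulties above do not arise.

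If you want to keep your global route, you would need three changes:
\begin{itemize}
\item Replace ``the framing kills it'' with a genuine existence argument. For instance, show that the pairs $(t,\phi\colon t^*\mathscr E\xrightarrow{\sim}\mathscr E)$ form an extension $1\to\mathbb G_m\to\mathcal G\to\mathbb T_S\to 1$ that splits \'etale-locally; a splitting is exactly an equivariant structure.
\item Check that the resulting \'etale-local maps to $\coprod M^s_{\vec\chi}$ descend.
\item Drop the tangent-space fallback.
\end{itemize}
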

\begin{proof}
First note that $F_{cl}$ in the above theorem is bijective on the closed points and the left hand side is a disjoint union over a countable set. The right hand side is a $\mathbb{C}$ scheme of finite type. This implies there are only finitely many characteristic functions for which $M^s_{\vec\chi}$ are non-empty. This implies the LHS is a quasi-projective $\mathbb{C}$ scheme.\par
Now we want to apply (Prop 4.7 in \cite{Koo2}) to the above morphism $F$ and to the closed subscheme given by the inclusion $i:(M^s_{P_{\Xi}})^{\mathbb{T}}\to M^s_{P_{\Xi}}.$ Proceeding by induction on the length of local Artinian complex algebras with the residue field $\mathbb{C}$ the above (\ref{claim}) shows us that (Prop 4.7 in \cite{Koo2}) is satisfied for length 1.\par 
The rest of the proof shows that the hypothesis of (Prop 4.7) is satisfied for local Artinian algebra $A'$ of length $l+1$ provided it is true for local Artinian algebras of smaller length. (Prop 4.8 in \cite{Koo2}) achieves this by showing $$\coprod_{\vec\chi\in (\chi_{P_{\Xi}})^{fr}}\mathscr{M}^s_{\vec\chi}(A')\to \mathscr{M}^s_{P_{\Xi}}(A')$$ maps bijectively onto the fixed point locus $$(\mathscr{M}^s_{P_{\Xi}}(A'))^{\mathbb{T}_{cl}}.$$ The rest of the proof follows as in (\cite{Koo2}).

\end{proof}
We state the final result on modified Gieseker stable sheaves with a fixed modified Hilbert polynomial as follows,
\begin{thm}\label{Theorem 4.5}
Let $\sX$ be a smooth projective toric DM stack with $X$ as its projective normal coarse moduli space and with fixed ample line bundle, $\sO_{X}(1),\Xi$ a generating sheaf with fixed equivariant structure, and let $P_{\Xi}$ be a choice of a modified Hilbert polynomial. Then we have a canonical isomorphism of quasi-projective $\mathbb C$ schemes between the fixed point locus and the disjoint union of properly GIT stable moduli spaces with framed characteristic functions corresponding to the Hilbert polynomial given as,
$$(M^{s}_{P_{\Xi}})^{\mathbb T} \cong \coprod \limits_{\vec\chi\in (\chi_{P_{\Xi}})^{fr}}M^{s}_{\vec\chi}.$$
\end{thm}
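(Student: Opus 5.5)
The plan is to obtain Theorem \ref{Theorem 4.5} from the immediately preceding theorem. That result gives the isomorphism $(M^s_{P_{\Xi}})^{\mathbb T}\cong\coprod_{\vec\chi\in(\chi_{P_{\Xi}})^{fr}}M^s_{\vec\chi}$ under one hypothesis: for every $\vec\chi\in\chi_{P_{\Xi}}$ there exists a $G$-equivariant line bundle on $\mathscr{N}_{\vec\chi}$ for which GIT stability coincides with modified Gieseker stability. The work is therefore to discharge this hypothesis and to check that the identification is canonical.

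\textbf{Step 1: discharge the hypothesis.} By Remark \ref{Stable remark}, which rests on Theorem \ref{Reflexive hull}, a stable toric torsion free sheaf has exactly one box summand per chart. Its characteristic function is thus $_{(b_1,\dots,b_l)}\vec\chi_{\mathscr F}$ for a unique tuple in $B$. Tensoring by an equivariant line bundle from Corollary \ref{LB} moves each $b_i$ to $(0,\dots,0)$ without changing modified Gieseker stability. Theorem \ref{Gieseker} then supplies an ample $\mathscr L\in Pic^G(\mathscr N_{\vec\chi})$ matching GIT stability with modified Gieseker stability for each such $\vec\chi$.

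\textbf{Step 2: finiteness and the bijection on closed points.} Only finitely many $\vec\chi$ with $M^s_{\vec\chi}\neq\emptyset$ give rise to $P_{\Xi}$, since the $M^s_{\vec\chi}$ map injectively into the finite type scheme $M^s_{P_{\Xi}}$. Theorem \ref{claim} then gives a bijection of closed points onto $(M^s_{P_{\Xi}})^{\mathbb T}_{cl}$. Here the framing kills the ambiguity coming from twisting by $M=Hom(\mathbb T,\mathbb C^*)$ in (\ref{ex}).

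\textbf{Step 3: upgrade to an isomorphism of schemes.} I would run Kool's criterion (\cite{Koo2}, Prop.\ 4.7) by induction on the length of local Artinian $\mathbb C$-algebras. The length-one case is Step 2. For the inductive step, I would show that an $A'$-flat family fixed by $\mathbb T$ carries a unique framed equivariant structure. The obstruction to lifting the equivariant structure lies in $H^1(\mathbb T,\cdot)$, which vanishes because $\mathbb T$ is linearly reductive. The framed lift is unique because stable sheaves are simple, so $\mathrm{End}=\mathbb C$, and the framing fixes the character.

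The remaining checks are that $\mathbb T$ acts on $M^s_{P_{\Xi}}$ and that the moduli spaces corepresent their functors. Both follow as in (\cite{Koo2}, Thm.\ 3.11, 3.12, Prop.\ 4.1), using \cite{Nir} for the stacky Gieseker moduli. Canonicity follows because $F$ is induced by the forgetful natural transformation $\mathscr M^s_{\vec\chi}\to\mathscr M^s_{P_{\Xi}}$.

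\textbf{Expected obstacle.} The hardest step should be Step 1 when $\sX$ is not an orbifold, i.e.\ a gerbe. Theorem \ref{Gieseker} is only stated for orbifolds, and its degree and positivity argument uses the twisted-sector terms in Riemann–Roch (\ref{eqn_orbifold_RR}). For gerbes I would first try reducing to the rigidification. The sheaf splits by $\mathscr K$-characters, and the generating sheaf $\Xi$ contributes each character with positive multiplicity. Matching stability would then be carried out character by character.
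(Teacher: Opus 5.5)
Your proposal follows the paper's proof. Theorem \ref{Theorem 4.5} is obtained by using Theorem \ref{Gieseker} to discharge the stability-matching hypothesis of the preceding theorem, whose proof is the closed-point bijection of Theorem \ref{claim} plus the Artinian induction via (\cite{Koo2}, Prop.~4.7, 4.8) that you outline. One small fix: finiteness of the non-empty $M^s_{\vec\chi}$ must come from the bijection onto the finite-type fixed locus, or from the final isomorphism, since injectivity into $M^s_{P_{\Xi}}$ alone does not bound the number of pieces.

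Your concern about the non-orbifold case is well founded, but the paper shares this limitation rather than resolving it. Theorem \ref{Gieseker} is stated only for orbifolds, and \S\ref{GIT and Gieseker} explicitly concentrates on them, so your character-by-character treatment of gerbes would go beyond what the paper actually proves.
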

 \begin{proof}
The proof follows the same procedure as (\cite{Koo2}). Indeed, the above theorem and (\ref{Gieseker}) imply the result we have to achieve.
\end{proof}
Next in spirit of (\cite{Koo2}, Thm 4.15) we obtain analogous results for the moduli spaces of modified slope-stable reflexive sheaves with fixed modified Hilbert polynomial and the same polarization data as above, on $\sX.$\par
Denote by $\chi^r$ the collection of characteristic functions associated to reflexive equivariant sheaves on $\sX$ which is a subset of the characteristic functions associated to equivariant torsion free sheaves on $\sX.$ Using (\ref{Reflexive hull}) we see that the stable reflexive sheaves are determined by one box summand per chart corresponding to the top cones.\par 
The modified slope (semi)stability for torus equivariant reflexive sheaves on $\sX$ can be checked for any equivariant coherent subsheaf from (\cite{Koo2}, Prop 4.13). For torsion free sheaves, such result follows from \cite{BDGP}.
The moduli functor $\mathscr{M}^{\mu s}_{P_{\Xi}}$ representing the family of geometrically modified slope-stable and reflexive sheaves as fibers defines an open property and hence is co-represented by an open subscheme $M^{\mu s}_{P_{\Xi}} \subseteq M^{s}_{P_{\Xi}}.$ \par
The space $M^{\mu s}_{\vec\chi}$ is constructed by taking GIT quotients of
a locally closed subscheme $$M^r_{\vec\chi}\subseteq A:=\prod\limits_{j=1}^{n}\prod\limits_{k=1}^{r-1}Gr(k,r)$$ with respect to the line bundle $$\mathscr{L}^{\mu}\in Pic^{SL(r,\mathbb{C})}(M^r_{\vec\chi}).$$ The construction of the line bundle matching properly GIT stable points of $M^r_{\vec\chi}$ to the modified slope-stable reflexive sheaves with characteristic function $\vec\chi$ follows exactly from (Theorem \ref{slope stability}.) The difference in the characteristic functions of a torsion free sheaf and a reflexive sheaf is given by the difference in their shapes. That is, a torsion free sheaf might not have strict corners in the filtration defining the characteristic function, unlike those of the reflexive sheaves which explains the ambient scheme $A$.\par
Denoting the collection of framed characteristic functions of equivariant reflexive sheaves on $\sX$ giving rise to the modified Hilbert polynomial $P_{\Xi}$ by $(\chi^{r}_{P_{\Xi}})^{fr}:=\chi^{fr}_{P_{\Xi}}\cap \chi^r_{P_{\Xi}},$ we have a similar result. The proof is analogous to the torsion free case and follows the proof of (\cite{Koo2}).
\begin{thm}\label{Final2}
Let $\sX$ be a smooth projective toric DM stack with $X$ as its projective normal coarse moduli space, an ample line bundle $H$ and a generating sheaf $\Xi$ with a fixed equivariant structure. Let $P_{\Xi}$ be a choice of a modified Hilbert polynomial of a reflexive sheaf. Then we have a canonical isomorphism of quasi-projective $\mathbb C$ schemes between the fixed point locus and the disjoint union of properly GIT stable moduli spaces with framed characteristic functions corresponding to the Hilbert polynomial given as,
$$(M^{\mu s}_{P_{\Xi}})^{\mathbb T} \cong \coprod \limits_{\vec\chi\in (\chi^r_{P_{\Xi}})^{fr}}M^{\mu s}_{\vec\chi}.$$
\end{thm}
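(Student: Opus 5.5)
The plan is to run the torsion free argument of Theorem \ref{Theorem 4.5} again, with Theorem \ref{slope stability} in place of Theorem \ref{Gieseker} and reflexive characteristic functions in place of general ones.

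\textbf{Step 1: one box summand and a forgetful morphism.} Let $\mathscr{F}$ be a modified slope-stable toric reflexive sheaf. It is indecomposable, so Theorem \ref{Reflexive hull} forces exactly one box summand $b_i$ on each chart $\mathscr{U}_{\sigma_i}$. By Remark \ref{Stable remark} its characteristic function is $\vec\chi_{\mathscr{F}}={}_{(b_1,\ldots,b_l)}\vec\chi_{\mathscr{F}}$. By Theorem \ref{Gluing Formula}, on each chart $\mathscr{F}$ is given by one multifiltration of $\mathbb{C}^r$ per ray, and these glue consistently across charts. Hence the reflexive framed $S$-families with fixed $\vec\chi$ are the closed points of the locally closed subscheme $M^r_{\vec\chi}\subseteq\prod_{j=1}^n\prod_{k=1}^{r-1}Gr(k,r)$. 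Theorem \ref{slope stability} supplies $\mathscr{L}^\mu$. The resulting chain of implications (modified $\mu$-stable $\Rightarrow$ properly GIT stable $\Rightarrow$ $\mu$-semistable) lets us identify $M^{\mu s}_{\vec\chi}$ with the open part of the GIT quotient parametrizing genuinely $\mu$-stable reflexive families. Forgetting the equivariant structure then gives a natural transformation $\mathscr{M}^{\mu s}_{\vec\chi}\to\mathscr{M}^{\mu s}_{P_\Xi}$. Since $M^{\mu s}_{P_\Xi}\subseteq M^s_{P_\Xi}$ is open, this yields a morphism $F^\mu:\coprod_{\vec\chi\in(\chi^r_{P_\Xi})^{fr}}M^{\mu s}_{\vec\chi}\to M^{\mu s}_{P_\Xi}$ landing in the $\mathbb{T}$-fixed locus.

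\textbf{Step 2: bijection on closed points.} Surjectivity onto $(M^{\mu s}_{P_\Xi})^{\mathbb{T}}_{cl}$ uses three facts:
\begin{enumerate}
\item A $\mathbb{T}$-fixed stable sheaf is simple.
\item Connectedness of $\mathbb{T}$ together with \cite{Koo2} lets a simple fixed sheaf admit an equivariant structure.
\item A reflexive sheaf with an equivariant structure is a toric reflexive sheaf, hence has a reflexive characteristic function.
\end{enumerate}
For injectivity, two equivariant structures on the same simple sheaf differ by a character in $M$, by the sequence (\ref{ex}). The framing (normalizing the maximal $A_1,\dots,A_d$ on $\sigma_1$ to $0$) picks out exactly one of them. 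This is Theorem \ref{claim} restricted to the reflexive, slope-stable locus. Since only finitely many framed $\vec\chi$ give nonempty pieces, the target is of finite type.

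\textbf{Step 3: scheme-theoretic isomorphism.} Apply (Prop 4.7 in \cite{Koo2}) to $F^\mu$ and the inclusion $(M^{\mu s}_{P_\Xi})^{\mathbb{T}}\hookrightarrow M^{\mu s}_{P_\Xi}$, inducting on the length of local Artinian $\mathbb{C}$-algebras $A'$. Length one is Step 2. The inductive step requires that $\mathbb{T}$-fixed $A'$-valued points of $\mathscr{M}^{\mu s}_{P_\Xi}$ be exactly the framed equivariant $A'$-families, i.e. (Prop 4.8 in \cite{Koo2}). This is the step I expect to be the main obstacle. We need equivariant structures to lift uniquely across the square-zero extensions, and the lifted family must remain reflexive with the same characteristic function.

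For the lifting, I would first try the argument of \cite{Koo2}: obstructions and ambiguities to lifting the equivariant structure live in group cohomology of $\mathbb{T}$ with coefficients in $\mathrm{Hom}(\mathscr{F},\mathscr{F})\otimes I\cong I$ (using simplicity), and this vanishes because $\mathbb{T}$ is linearly reductive. Uniqueness then follows from the framing.

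For reflexivity, openness of reflexivity in flat families (noted above, giving $M^{\mu s}_{P_\Xi}$ as an open subscheme) preserves it. Local constancy of the $\mathbb{T}$-weight dimensions on the connected Artinian base preserves $\vec\chi$.
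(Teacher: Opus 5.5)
Your proposal is correct and follows essentially the same route as the paper, which reruns the torsion free argument of Theorem \ref{Theorem 4.5} with four ingredients: the slope-matching bundle $\mathscr{L}^\mu$ of Theorem \ref{slope stability}, one box summand per chart, a framing to remove the character ambiguity, and the Artinian induction of Props.~4.7--4.8 in \cite{Koo2}. Your restriction to the open $\mu$-stable part of the properly GIT stable quotient and your group-cohomology sketch of the lifting step spell out what the paper leaves to \cite{Koo2}; the only slip is that finiteness of the nonempty $M^{\mu s}_{\vec\chi}$ makes the source (not the target) of finite type, and, as in the paper, it should be derived from the bijection on closed points onto the finite-type target rather than assumed.
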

\begin{example}
As an application of the results, we compute the generating function for the modified Euler characteristics of the stable torsion free modified slope sheaves of rank 1 on any projective smooth surface toric orbifold (i.e dimension=2), with the Picard group $\mathbb{Z}^{n-2}$ corresponding to a fixed $c_1=(x_1,..,x_{n-2}) \in \mathbb{Z}^{n-2}.$ We denote the reflexive hull corresponding to $c_1$ by $L_{(B_1,..,B_n)}$ using the surjectivity of (\ref{ex}). \par 
Fix $\mathscr{F}$ with a reflexive hull given by $L_{(B_1,..,B_n)}$ the toric line bundle. We are only interested in parametrizing the underlying torsion free sheaf and not the equivariant structure. Fixing $(B_1,B_2)$ on the top cone $\sigma_1$ we can express the other integers $(B_{3},..,B_n)$ as linear combinations of $(B_1,B_2).$ In that case considering $$\mathscr{F}\otimes L_{(-B_1,-B_2,\ \sum_{a=1}^{2}\lambda_a^3B_a\ ,..,\ \sum_{a=1}^{2}\lambda_a^{n}B_a)}$$ we keep the same torsion free sheaf and only change the equivariant structure. Hence it is enough to consider $\mathscr{F}$ with a reflexive hull $L_{(0,0,B_{3},..,B_{n})}$ and with the same fixed $c_1.$\par
Let $\mathscr{F}$ be rank 1 torsion free sheaf and denote the cokernel sheaf $\mathscr{Q}:=L_{(0,0,B_{3},..,B_n)}/\mathscr{F}.$ Under the torus action we have $l$ fixed points (zero dimensional substacks) corresponding to each top cone and denote them by $P_i$. The support of $\mathscr{Q}$ is given as follows. Let us denote by $j^i_a$ the number of skyscraper sheaves on the chart $\mathscr{U}_{\sigma_i}$ with support $P_i$ with the $K-$ group class given by $[\sO_{P_i}\otimes \alpha]$ where $\alpha \in DG_{\sigma_i}.$ \par 
Denote the modified Euler characteristic of the line bundle $(x_1,..,x_{n-2})$ by $\chi_{\Xi}(x_1,..,x_{n-2})$, which is the constant part of the modified Hilbert polynomial. Similarly, denote the modified Euler characteristic of 
$[\sO_{P_i}\otimes \alpha]$ by $\chi_{\Xi,\alpha}^{i}.$
We denote the generating function of the modified Euler characteristic of the moduli of modified stable rank 1 torsion free sheaves with fixed $c_1$ as,
$$Z_{c_1}(q)=\sum_{\chi_{\Xi}\in \mathbb{Z}}e(M_{\sX}(1,c_1,\chi_{\Xi}))q^{\chi_{\Xi}}=\sum_{\chi_{\Xi}\in \mathbb{Z}}e(M_{\sX}(1,c_1,\chi_{\Xi})^{\mathbb{T}})q^{\chi_{\Xi}}.$$
We use torus localization for the right hand side.\par  Using the fixed point locus description, we find that the closed points of $M_{\sX}(1,c_1,\chi_{\Xi})^{\mathbb{T}}$ are in bijection with $(j^i_{\alpha})$ where $\alpha \in DG_{\sigma_{i}}$ such that 
$$\chi_{\Xi}(x_1,..,x_{n-2})-\sum_{i=1}^{l}\sum_{\alpha\in DG_{\sigma_i}}j^{i}_{\alpha}\chi_{\Xi,\alpha}^{i}=\chi_{\Xi}.$$
The final formula follows
$$Z_{(x_1,..,x_{n-2})}(q)=q^{\chi_{\Xi}(x_1,..,x_{n-2})}\prod_{k=1}^{\infty}\prod_{i=1}^{l}\prod_{\alpha \in DG_{\sigma_i}}\frac{1}{(1-q^{-\chi^i_{\Xi,\alpha}k})^{j^i_{\alpha}}}.$$
\end{example}
\begin{example}
For this example, see \cite{GJK}.
In the case of weighted projective surface orbifold $\sX:=\mathbb{P}(a,b,c)$ one can show that the modified Euler characteristic of a sheaf corresponding to a single $\mathbb{C}$ in one chart depends on the size of each cell in the chart itself (i.e the Hilbert polynomial does not depend on the fine-grading.) The generating sheaf is given by $\Xi=\bigoplus_{k=1}^{abc-1}\sO_{\sX}(k).$ Here $\sO_{\sX}(1)$ corresponds to the standard representation and is the generator of $Pic({\sX})\cong \mathbb{Z}.$ Thus for a fixed $c_1=x \in \mathbb{Z},$ we have,
$$Z_{x}(q)=q^{\chi_{\Xi}(\sO_{\sX}(x))}\prod_{k=1}^{\infty}\frac{1}{(1-q^{-abk})(1-q^{-bck})(1-q^{cak})}$$ and
$$\chi_{\Xi}(\sO_{\sX}(x))$$\\
$$=\frac{a^2+b^2+c^2+3ab+3bc+3ca+6((a+b+c)\sum_{k=0}^{abc-1}(x+k)+\sum_{k=0}^{abc-1}(x+k)^2)}{12}.$$
\end{example}




\subsection*{}

\end{document}